\date{}
\renewcommand{\uppercasenonmath}[1]{}
\numberwithin{equation}{section} \theoremstyle{plain}
\newtheorem{lem}{Lemma}[section]
\newtheorem{cor}[lem]{Corollary}
\newtheorem{prop}[lem]{Proposition}
\newtheorem{thm}[lem]{Theorem}
\newtheorem{definition}[lem]{Definition}
\newtheorem{Ex}[lem]{Example}
\newtheorem{Quest}[lem]{Question}
\newtheorem{Property}[lem]{Property}
\newtheorem{Properties}[lem]{Properties}
\newtheorem{Subprops}{}[lem]
\newtheorem{Para}[lem]{}
\newtheorem{remark}[lem]{Remark}
\newtheorem{rem}[lem]{Remark}
\newenvironment{df}{\begin{definition}\rm}{\end{definition}}
\newenvironment{ex}{\begin{Ex}\rm}{\end{Ex}}
\newtheorem*{ack*}{ACKNOWLEDGEMENTS}
\newcommand{\pf}{\noindent\begin {proof}}
\newcommand{\epf}{\end{proof}}
\newcommand{\X}{\mathcal{X}}
\newcommand{\W}{\mathcal{W}}
\newcommand{\C}{\mathcal{C}}
\newcommand{\HH}{\mathcal{H}}
\newcommand{\E}{\mathbb{E}}
\newcommand{\w}{\widecheck}
\newcommand{\ra}{\rightarrow}
\begin{document}
\begin{center}
{\large  \bf   A new characterization of silting subcategories in the stable category of a Frobenius extriangulated category}

\vspace{0.5cm}  Yajun Ma, Nanqing Ding, Yafeng Zhang, Jiangsheng Hu

\end{center}

\bigskip
\centerline { \bf  Abstract}
\medskip

\leftskip10truemm \rightskip10truemm \noindent We give a new characterization of silting subcategories in the stable category of a Frobenius extriangulated category, generalizing the result of Di et al. (J. Algebra 525 (2019) 42-63) about the Auslander-Reiten type correspondence for silting subcategories over triangulated categories. More specifically, for any Frobenius extriangulated category $\mathcal{C}$, we establish a bijective correspondence
between silting subcategories of the stable category $\underline{\mathcal{C}}$ and certain covariantly finite subcategories of $\mathcal{C}$. As a consequence, a characterization of
silting subcategories in the stable category of a Frobenius exact category is given. This result is applied to homotopy categories over abelian categories with enough projectives, derived categories over Grothendieck categories with enough projectives as well as to the stable category of Gorenstein projective modules over a ring $R$.\\[2mm]
{\bf Keywords:} Frobenius extriangulated category; silting subcategory; covariantly finite; cotorsion pair.\\
{\bf 2010 Mathematics Subject Classification:} 18E30; 18E10; 18G25; 18G20.

\leftskip0truemm \rightskip0truemm
\section { \bf Introduction}

Silting subcategories and objects in triangulated categories were introduced by Keller and Vossieck \cite{KV} and later studied by Aihara and Iyama \cite{AI}, as a way of overcoming a problem inherent to tilting objects, namely, that mutations are sometimes impossible to define.
According to \cite{IY,NSZ}, silting subcategories (objects) are closely related to t-structures and co-t-structures in triangulated categories. Thus one can better understand the structure of a triangulated category from the viewpoint of the silting theory.
Recently, Di, Liu, Wang and Wei \cite{DLWW} established a bijective correspondence
between silting subcategories of a triangulated category $\mathcal{T}$ and certain covariantly finite subcategories of $\mathcal{T}$. The notion of covariantly finite subcategories was first introduced by Auslander and Smal${\o}$ \cite{AS1980} (note that a subcategory is called covariantly finite precisely when it is preenveloping in the sense of Enochs and Jenda \cite{EJ2}). It should be noted that the bijective correspondence mentioned above is known as the ``Auslander-Reiten type correspondence" for silting subcategories in \cite{DLWW}, which is inspired by the classical Auslander-Reiten type correspondence for tilting modules established by Auslander and Reiten in \cite{AR}.

Happel \cite{Happel1988} showed that if $(\mathcal{C}, \mathcal{S})$ is a Frobenius exact category, then its stable category $\underline{\mathcal{C}}$ carries a triangulated structure. Beligiannis obtained a similar result \cite[Theorem 7.2]{Bel1} by replacing $\mathcal{C}$ with a triangulated category $\mathcal{T}$ and replacing $\mathcal{S}$ with a proper class of triangles $\mathcal{E}$. Recently, Nakaoka and Palu unified  the above results and proved
that the stable category of a Frobenius extriangulated category is a triangulated category
(see \cite[Corollary 7.4 and Remark 7.5]{NP}). The notion of extriangulated categories was introduced by Nakaoka and Palu in \cite{NP} as a simultaneous generalization of
exact categories and triangulated categories. As typical examples we
have that Frobenius exact categories and triangulated categories are Frobenius extriangulated categories (see \cite[Example 7.2(1) and Corollary 7.6]{NP}). However, there exist some  examples of Frobenius extriangulated categories which are neither exact nor
triangulated (see \cite[Example 4.14]{ZZ}).

The aim of this paper is to give a new characterization of silting subcategories in the stable category of a Frobenius extriangulated category. More precisely, for any Frobenius extriangulated category $\mathcal{C}$, we give a bijective correspondence
between silting subcategories of the stable category $\underline{\mathcal{C}}$ and certain covariantly finite subcategories of $\mathcal{C}$.

In dealing with the above problem, the main technical problem we encounter is that one has to choose an appropriate covariantly finite subcategory in a Frobenius extriangulated category $\mathcal{C}$ from a given silting subcategory in the stable category $\underline{\mathcal{C}}$. To circumvent this problem, by the bijective correspondence between silting subcategories and bounded co-t-structures in a triangulated category established by Mendoza Hern$\acute{\mathrm{a}}$ndez, S$\acute{\mathrm{a}}$enz, Santiago Vargas and Souto Salorio in \cite[Corollary 5.9]{MSSS2}, we first provide an explicit procedure to construct cotorsion pairs in the ``new" extriangulated category induced by the certain subcategory in a given extriangulated category, and then demonstrate that these cotorsion pairs not only give rise to constructing bounded co-t-structures for silting subcategories in the stable category of a Frobenius extriangulated category, but also are helpful to choose certain covariantly finite subcategories in a Frobenius extriangulated category.

To state our main results more precisely, let us first introduce some definitions.

Assume that $(\mathcal{C}, \mathbb{E}, \mathfrak{s})$ is an extriangulated category, where $\C$ is an additive category, $\mathbb{E}: \C^{\rm op}\times \C \rightarrow {\rm Ab}$ is an additive bifunctor and $\mathfrak{s}$ assigns to each $\delta\in \mathbb{E}(C,A)$ a class of $3$-term sequences with end terms $A$ and $C$ such that certain axioms hold (see \cite[Definition 2.10]{NP}). If there is no confusion, we also write $\mathcal{C}:=(\mathcal{C}, \mathbb{E}, \mathfrak{s})$.
We will use the following notation.

$\bullet$ A sequence $\xymatrix@C=0.6cm{A \ar[r]^{x} & B \ar[r]^{y} & C }$ is called a \emph{conflation} if it realizes some $\mathbb{E}$-extension $\delta\in\mathbb{E}(C, A)$.
In this case, $x$ is called an {\it inflation}, $y$ is called a {\it deflation}, and we write it as $$\xymatrix{A\ar[r]^x&B\ar[r]^{y}&C\ar@{-->}[r]^{\delta}&.}$$
We usually do not write this ``$\delta$" if it is not used in the argument.

$\bullet$ An object $P\in\mathcal{C}$ is called \emph{projective} if $\E(P, \C)=0$. We say that $\C$ has \emph{enough projectives} if any object $C\in{\C}$ admits a deflation $P\rightarrow C$ for some projective object $P$. Dually, we have the notions of injectives and {enough injectives}. In the following, we denote by $Proj(\C)$ the subcategory of projective objects and by $Inj(\C)$ the subcategory of injective objects. It should be noted that $Proj(\mathcal{C})$ and $Inj(\mathcal{C})$ consist of zero objects whenever $\mathcal{C}$ is a triangulated category.

$\bullet$ $\mathcal{C}$ is said to be \emph{Frobenius} if $\mathcal{C}$ has enough projectives and injectives and if moreover the projectives coincide with the injectives. In this case, one has the quotient category $\underline{\mathcal{C}}$ of $\mathcal{C}$ by projectives, which is a triangulated category by \cite[Corollary 7.4 and Remark 7.5]{NP}. We refer to this category as the \emph{stable category} of $\C$.

$\bullet$ A subcategory $\mathcal{H}$ of $\mathcal{C}$ is called \emph{extension-closed} if $\mathcal{H}$ is closed under extensions, i.e., for any $\mathbb{E}$-triangle $\xymatrix{A\ar[r]^{x}&B\ar[r]^{y}&C\ar@{-->}[r]^{\delta}&}$ with $A,C\in\mathcal{H}$, we have $B\in\mathcal{H}$. Given an $\mathbb{E}$-triangle $\xymatrix{A \ar[r]^{x} & B \ar[r]^{y} & C \ar@{-->}[r]^{\delta}&,}$ we call $A$ the \emph{cocone} of $y:B\rightarrow C$ and $C$ the \emph{cone} of $x:A\rightarrow B$.
Recall from \cite{ZZ1} that a subcategory $\mathcal{H}\subseteq \C$ is called \emph{resolving} (resp., \emph{coresolving}) if it contains $Proj(\C)$ ~(resp., $Inj(\C)$), closed under extensions, and cocones of deflations (resp., cones of inflations).

$\bullet$ Recall from \cite{ZZ1} that an \emph{$\E$-triangle sequence} in $\mathcal{C}$ is displayed as a sequence
$$\xymatrix{\cdots\ar[r]&X_{n+1}\ar[r]^{d_{n+1}}&X_{n}\ar[r]^{d_{n}}&X_{n-1}\ar[r]&\cdots&}$$
over $\mathcal{C}$ such that for any $n$, there are $\E$-triangles $\xymatrix{K_{n+1}\ar[r]^{g_{n}}&X_{n}\ar[r]^{f_{n}}&K_{n}\ar@{-->}[r]^{\delta^{n}}&}$ and the differential $d_{n}=g_{n-1}f_{n}$.

Let $\mathcal{X}$ be a subcategory of $\mathcal{C}$. We denote by $\w{\mathcal{X}}$ the class of objects $C\in \C$ such that there exists an $\E$-triangle sequence ~$C\rightarrow X_{0}\rightarrow\cdots\rightarrow X_{-n+1}\rightarrow X_{-n}$ with each $X_{-i}\in \mathcal{X}$ for some non-negative integer $n$.

%
%
%

Let $\mathcal{Y}$ be a subcategory of $\C$. Recall that a morphism $f:C\rightarrow Y$ with $Y\in{\mathcal{Y}}$ is called a \emph{left $\mathcal{Y}$-approximation}
(or a \emph{$\mathcal{Y}$-preenvelope}) of $C$ if $\C(f,Y'):\C(Y,Y')\rightarrow \C(C,Y')$ is surjective for any object $Y'\in{\mathcal{Y}}$. If any $C\in \mathcal{C}$ admits a  left $\mathcal{Y}$-approximation, then $\mathcal{Y}$ is called \emph{covariantly finite} in $\C$. For more details, we refer to \cite{AR,EJ2}. Moreover, $\mathcal{Y}$ is called \emph{specially covariantly finite} in $\C$ provided that for any $C\in \C$, there is an $\E$-triangle  $\xymatrix@C=0.6cm{C\ar[r]^{f}&Y\ar[r]^{g}&K\ar@{-->}[r]^{}&}$ such that $Y\in \mathcal{Y}$ and $\E(K,\mathcal{Y})=0$. In this case, $f:C\rightarrow Y$ is a left $\mathcal{Y}$-approximation.

If $\mathcal{C}$ has enough projectives and injectives, for any subcategory $\mathcal{M}$ of $\mathcal{C}$, we set $$\mathcal{M}^{\perp}=\{Y\in\mathcal{C} \ | \ \E^{i}(X, Y)=0 \mathrm{~for~all~} i\geq 1, \mathrm{~and~all~} X \in \mathcal{M}\},$$
where $\E^{i}(X, Y)$ is the higher extension group defined by Liu and Nakaoka in \cite[Section 5.2]{LN}.

Let $\mathcal{S}$ be a subcategory of a triangulated category $\mathcal{T}$. Following \cite[Definition 2.1]{AI}, $\mathcal{S}$ is called \emph{silting} if ${\rm{Hom}}_{\mathcal{T}}(\mathcal{S}, \mathcal{S}[i])=0$ for all $i>0$, and $\mathcal{T}={\rm thick}(\mathcal{S})$ is the smallest triangulated subcategory of $\mathcal{T}$ containing $\mathcal{S}$ and closed under direct summands and isomorphisms.

Now, our main result can be stated as follows.

\begin{thm}\label{thm5}
Let $\C$ be a Frobenius extriangulated category. The assignments
\begin{center}
 $\Psi:\underline{\mathcal{M}}\mapsto$~${\mathcal{M}^{\bot}}$~$~~$ and $~~$~$\Phi:\mathcal{H}\mapsto\underline{\mathcal{H}\bigcap{^{\bot}\mathcal{H}}}$
\end{center}
  give mutually inverse bijections between the following classes: 
\begin{enumerate}
\item Silting subcategories $\underline{\mathcal{M}}$ of the stable category $\underline{\C}$ with $Proj(\C)\subseteq \mathcal{M}$.
\item Subcategories $\mathcal{H}$ of $\C$, which are specially covariantly finite and coresolving in $\C$ such that $\w{\mathcal{H}}=\C$ and for any $H\in \mathcal{H}$, there exists a positive integer $t\geq 1$ making $\E^{i}(H,\mathcal{H})=0$ for all $i\geq t$.
\end{enumerate}
\end{thm}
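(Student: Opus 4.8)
The plan is to realize the two maps as shadows of a single three-way correspondence linking silting subcategories of $\underline{\C}$, bounded co-t-structures of $\underline{\C}$, and complete hereditary cotorsion pairs of $\C$. The indispensable dictionary is the natural isomorphism ${\rm Hom}_{\underline{\C}}(X,Y[i])\cong\E^{i}(X,Y)$ for $i\geq 1$, valid in any Frobenius extriangulated category once one uses the higher extensions of \cite{LN} and the description of the shift $[1]$ by cosyzygy conflations. Under this dictionary the rigidity ${\rm Hom}_{\underline{\C}}(\mathcal{M},\mathcal{M}[i])=0$ ($i>0$) becomes $\E^{i}(\mathcal{M},\mathcal{M})=0$ ($i\geq 1$), i.e.\ $\mathcal{M}\subseteq\mathcal{M}^{\perp}$, while membership in $\mathcal{M}^{\perp}$ becomes the vanishing ${\rm Hom}_{\underline{\C}}(\mathcal{M},(-)[i])=0$ for $i\geq 1$. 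I would record at the outset two easy facts: since $\C$ is Frobenius, $Proj(\C)=Inj(\C)$ lies in both $\mathcal{M}^{\perp}$ and ${}^{\perp}\mathcal{M}$; and $\mathcal{M}^{\perp}$ is extension-closed and closed under cones of inflations (hence coresolving, as it contains $Inj(\C)$) by the long exact sequences attached to $\E^{i}(\mathcal{M},-)$.

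For $\Phi$, let $\mathcal{H}$ satisfy condition (2). First I would show that $({}^{\perp}\mathcal{H},\mathcal{H})$ is a complete hereditary cotorsion pair in $\C$: heredity and the equality $({}^{\perp}\mathcal{H})^{\perp}=\mathcal{H}$ follow from $\mathcal{H}$ being coresolving together with $\w{\mathcal{H}}=\C$, whereas completeness (the existence of approximation conflations) follows from special covariant finiteness of $\mathcal{H}$ combined with a dimension shift using the uniform bound $\E^{i}(H,\mathcal{H})=0$ for $i\geq t$. Setting $\mathcal{M}:=\mathcal{H}\cap{}^{\perp}\mathcal{H}$, this cotorsion pair projects to a co-t-structure on $\underline{\C}$ with co-heart $\underline{\mathcal{M}}$; its boundedness is exactly the conjunction of the finite $\mathcal{H}$-coresolutions supplied by $\w{\mathcal{H}}=\C$ and the finite ${}^{\perp}\mathcal{H}$-resolutions supplied by completeness. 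Then \cite[Corollary 5.9]{MSSS2} gives that $\underline{\mathcal{M}}$ is silting, and $Proj(\C)\subseteq\mathcal{M}$ because $Proj(\C)=Inj(\C)\subseteq\mathcal{H}$ while projectives always lie in ${}^{\perp}\mathcal{H}$.

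For $\Psi$, let $\underline{\mathcal{M}}$ be silting with $Proj(\C)\subseteq\mathcal{M}$. By \cite[Corollary 5.9]{MSSS2} it is the co-heart of a bounded co-t-structure of $\underline{\C}$, and the task is to transport its truncation triangles back into genuine $\E$-triangles of $\C$ and to identify the resulting subcategory as $\mathcal{H}:=\mathcal{M}^{\perp}$. Concretely I would build a complete hereditary cotorsion pair $({}^{\perp}\mathcal{H},\mathcal{H})$ whose projection realizes the given co-t-structure; the co-aisle half of each truncation triangle then lifts to a conflation $C\to Y\to K$ with $Y\in\mathcal{H}$ and $\E^{i}(K,\mathcal{H})=0$ for $i\geq 1$, which is precisely the special covariant finiteness of $\mathcal{H}$. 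Coresolvingness of $\mathcal{H}$ is the preliminary observation, and both $\w{\mathcal{H}}=\C$ and the uniform bound $\E^{i}(H,\mathcal{H})=0$ ($i\geq t$) follow from boundedness of the co-t-structure, since a finite weight range of an object translates into a finite $\mathcal{H}$-coresolution together with a ceiling on the nonzero $\E^{i}$.

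Finally, $\Psi$ and $\Phi$ are mutually inverse. The inclusions $\mathcal{H}\subseteq(\mathcal{H}\cap{}^{\perp}\mathcal{H})^{\perp}$ and $\mathcal{M}\subseteq\mathcal{M}^{\perp}\cap{}^{\perp}(\mathcal{M}^{\perp})$ are immediate from the definitions of ${}^{\perp}\mathcal{H}$ and $\mathcal{M}^{\perp}$ (using $\mathcal{M}\subseteq\mathcal{M}^{\perp}$ for the latter), while the reverse inclusions assert that $\mathcal{M}$ and $\mathcal{H}$ are the co-heart and co-aisle of one and the same bounded co-t-structure, so they follow from the uniqueness built into the three-way correspondence: a co-t-structure is determined by its co-heart, and a complete cotorsion pair by either of its halves. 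I expect the main obstacle to be the lifting step in the well-definedness of $\Psi$ — producing from the abstract truncation triangles in $\underline{\C}$ honest $\E$-triangles in $\C$ with controlled projective error terms, and checking that the subcategory they cut out is genuinely $\mathcal{M}^{\perp}$ rather than merely a subcategory projecting to the correct co-aisle. This is exactly the difficulty flagged in the introduction, and it is met by the explicit construction of cotorsion pairs in the extriangulated category induced by $\mathcal{M}$.
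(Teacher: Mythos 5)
Your proposal is correct and takes essentially the same route as the paper: both factor the bijection through the chain (silting subcategories of $\underline{\C}$) $\leftrightarrow$ (bounded co-t-structures on $\underline{\C}$) $\leftrightarrow$ (hereditary cotorsion pairs $(\mathcal{U},\mathcal{V})$ on $\C$ with $\widehat{\mathcal{U}}=\widecheck{\mathcal{V}}=\C$) $\leftrightarrow$ (subcategories satisfying condition (2)), using \cite[Corollary 5.9]{MSSS2}, descent/lifting between $\C$ and $\underline{\C}$, and Auslander--Buchweitz-style approximation theory. The dictionary $\E^{i}(X,Y)\cong\underline{\C}(X,Y[i])$ and the lifting step you flag as the main obstacle are exactly the paper's Lemmas \ref{lem13} and \ref{lem14} together with Propositions \ref{prop5} and \ref{prop6}, and your two remaining intermediate correspondences are the paper's Theorems \ref{thm4} and \ref{thm:4.10}.
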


If $\C$ is a triangulated category in Theorem \ref{thm5}, then we recover the Auslander-Reiten type correspondence for silting subcategories over triangulated categories established by Di, Liu, Wang and Wei in \cite{DLWW} (see Corollary \ref{cor:4.13}). Now, specializing $\C$ to a Frobenius exact category, we then get the following corollary.

\begin{cor}\label{cor:1.2} Let $\C$ be a Frobenius exact category. The assignments
\begin{center}
 $\underline{\mathcal{M}}\mapsto$~${\mathcal{M}^{\bot}}$~$~~$ and $~~$~$\mathcal{H}\mapsto\underline{\mathcal{H}\bigcap{^{\bot}\mathcal{H}}}$
\end{center}
  give mutually inverse bijections between the following classes: 
\begin{enumerate}
\item Silting subcategories $\underline{\mathcal{M}}$ of $\underline{\C}$ with $Proj(\C)\subseteq \mathcal{M}$.
\item Subcategories $\mathcal{H}$ of $\C$, which are specially covariantly finite and coresolving in $\C$ such that $\w{\mathcal{H}}=\C$ and for any $H\in \mathcal{H}$, there exists a positive integer $t\geq 1$ making $\mathrm{Ext}_{\C}^{i}(H,\mathcal{H})=0$ for all $i\geq t$.
\end{enumerate}
\end{cor}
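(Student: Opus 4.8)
The plan is to deduce Corollary \ref{cor:1.2} directly from Theorem \ref{thm5} by viewing the Frobenius exact category $\C$ as a Frobenius extriangulated category and then identifying the two Ext-theories involved. As recalled in the introduction, an exact category $(\C,\mathcal{S})$ carries a canonical extriangulated structure $(\C,\E,\mathfrak{s})$ in which $\E=\mathrm{Ext}^1_\C$ is the Yoneda group of admissible short exact sequences and $\mathfrak{s}$ realizes each such extension by the corresponding conflation (see \cite[Example 7.2(1)]{NP}). Under this identification the conflations, inflations and deflations of the extriangulated structure are precisely the admissible short exact sequences, admissible monomorphisms and admissible epimorphisms of $\C$. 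First I would check that the extriangulated notions specialize to the classical ones: an object is projective in the sense $\E(P,\C)=0$ exactly when it is relatively projective in $(\C,\mathcal{S})$, enough projectives (resp.\ injectives) in the two senses agree, and hence $\C$ is Frobenius as an extriangulated category if and only if it is Frobenius as an exact category. Consequently the stable category $\underline{\C}$ and its triangulated structure coincide with Happel's, so that the class (1) in Corollary \ref{cor:1.2} is literally the class (1) of Theorem \ref{thm5}.

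Next I would verify that the conditions defining class (2) translate verbatim. The properties ``coresolving'', ``specially covariantly finite'' and ``$\w{\mathcal{H}}=\C$'' are all phrased purely in terms of $\E$-triangles (i.e.\ conflations) and left approximations, which in the exact setting are exactly the admissible short exact sequences and $\mathcal{H}$-preenvelopes; thus these three conditions are unchanged. The only clause that looks different is the eventual vanishing condition, which in Theorem \ref{thm5} is stated with the Liu--Nakaoka higher extensions $\E^i$ and in Corollary \ref{cor:1.2} with the Yoneda groups $\mathrm{Ext}^i_\C$. Hence the corollary will follow once I show that these two families of bifunctors agree for $i\geq 1$.

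The technical heart of the argument, and the step I expect to be the main obstacle, is therefore the isomorphism $\E^i(X,Y)\cong\mathrm{Ext}^i_\C(X,Y)$ for all $i\geq 1$ when $\C$ is an exact category. For $i=1$ this holds by construction, since $\E^1=\E=\mathrm{Ext}^1_\C$. For the inductive step I would use that $\C$, being Frobenius, has enough projectives and injectives, so every object $X$ sits in a conflation $\Omega X\rightarrow P\rightarrow X$ with $P$ projective. Both theories satisfy the same dimension-shifting isomorphism $\E^i(X,Y)\cong\E^{i-1}(\Omega X,Y)$ for $i\geq 2$: for $\mathrm{Ext}^i_\C$ this is the classical long exact sequence in an exact category with enough projectives, while for the Liu--Nakaoka groups it is the shifting property established in \cite[Section 5.2]{LN}; here projectivity of $P$ guarantees $\E^{j}(P,-)=0$ for $j\geq 1$, which annihilates the boundary terms in the relevant long exact sequence. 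An induction on $i$ then yields the desired isomorphism compatibly with the $i=1$ identification, so that $\E^i(H,\mathcal{H})=0$ for all $i\geq t$ if and only if $\mathrm{Ext}^i_\C(H,\mathcal{H})=0$ for all $i\geq t$.

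Once this comparison is in hand, the two descriptions of class (2) coincide, the assignments $\Psi$ and $\Phi$ of Theorem \ref{thm5} are exactly those of Corollary \ref{cor:1.2}, and the corollary is immediate. The remaining verifications---that projectives and injectives match and that the approximation-theoretic conditions are preserved---are routine once the extriangulated structure on $\C$ is fixed, so essentially all of the content reduces to the $\E^i\cong\mathrm{Ext}^i_\C$ identification.
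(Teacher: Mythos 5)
Your proposal is correct and follows essentially the same route as the paper: the paper obtains Corollary \ref{cor:1.2} simply by specializing Theorem \ref{thm5}, using that a Frobenius exact category is a Frobenius extriangulated category (via \cite[Example 7.2(1)]{NP}) in which the Liu--Nakaoka groups $\E^{i}$ agree with $\mathrm{Ext}^{i}_{\C}$. Your explicit dimension-shifting induction for the identification $\E^{i}\cong\mathrm{Ext}^{i}_{\C}$ is exactly the (implicit) content of that specialization, so there is no gap.
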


We will apply Corollary \ref{cor:1.2} to homotopy categories over abelian categories with enough projectives, derived categories over Grothendieck categories with enough projectives as well as to the stable category of Gorenstein projective modules over a ring $R$.

%
%

The contents of this paper are outlined as follows. In Section \ref{Preliminaries}, we fix notations and recall some definitions and basic facts used throughout the paper. In Section
\ref{coresolution-dimension}, we study the coresolution dimension relative to different subcategories in $\mathcal{C}$. In Section \ref{proof-of-main-result}, we first construct cotorsion pairs for an extriangulated category (see Corollary \ref{corollary:4.4}). This is based on the homological properties of the coresolution dimension established in Section \ref{coresolution-dimension}. As a result, we give the proof of Theorem \ref{thm5}. As a consequence, we re-obtain a main result in \cite{DLWW} (see Corollary \ref{cor:4.13}). Furthermore, we deduce a series of consequences of Theorem \ref{thm5} for some different Frobenius exact categories, including Corollary \ref{cor:1.2}.

\section{\bf Preliminaries}\label{Preliminaries}
Throughout this paper, by the term $``subcategory"$ we always mean a full additive subcategory of an additive category closed under isomorphisms and direct summands. For any additive category $\mathcal{D}$, we denote by ${\mathcal{D}}(A, B)$ the set of morphisms from $A$ to $B$ in $\mathcal{D}$.

Let us briefly recall some definitions and basic properties of extriangulated categories from \cite{NP}. We omit some details here, but the reader can find them in \cite{NP}.

Assume that $\mathbb{E}: \mathcal{C}^{\rm op}\times \mathcal{C}\rightarrow {\rm Ab}$ is an additive bifunctor, where $\mathcal{C}$ is an additive category and ${\rm Ab}$ is the category of abelian groups. For any objects $A, C\in\mathcal{C}$, an element $\delta\in \mathbb{E}(C,A)$ is called an $\mathbb{E}$-extension.
Let $\mathfrak{s}$ be a correspondence which associates an equivalence class $$\mathfrak{s}(\delta)=\xymatrix@C=0.8cm{[A\ar[r]^x
 &B\ar[r]^y&C]}$$ to any $\mathbb{E}$-extension $\delta\in\mathbb{E}(C, A)$. This $\mathfrak{s}$ is called a {\it realization} of $\mathbb{E}$, if it makes the diagram in \cite[Definition 2.9]{NP} commutative.
 A triplet $(\mathcal{C}, \mathbb{E}, \mathfrak{s})$ is called an {\it extriangulated category} if it satisfies the following conditions.
\begin{enumerate}
\item $\mathbb{E}\colon\mathcal{C}^{\rm op}\times \mathcal{C}\rightarrow \rm{Ab}$ is an additive bifunctor.

\item $\mathfrak{s}$ is an additive realization of $\mathbb{E}$.

\item $\mathbb{E}$ and $\mathfrak{s}$ satisfy certain axioms in \cite[Definition 2.12]{NP}.
\end{enumerate}

In particular, we recall the following axioms which will be used later:

\emph{(ET4)} Let $\delta\in\mathbb{E}(D,A)$ and $\delta'\in\mathbb{E}(F, B)$ be $\mathbb{E}$-extensions realized by
 \begin{center} $\xymatrix{A\ar[r]^f&B\ar[r]^{f'}&D}$ and $\xymatrix{B\ar[r]^g&C\ar[r]^{g'}&F}$\end{center}
 respectively. Then there exists an object $E\in\mathcal{C}$, a commutative diagram
 $$\xymatrix{A\ar[r]^f\ar@{=}[d]&B\ar[r]^{f'}\ar[d]_g&D\ar[d]^d\\
A\ar[r]^h&C\ar[r]^{h'}\ar[d]_{g'}&E\ar[d]^e\\
&F\ar@{=}[r]&F}$$
in $\mathcal{C}$, and an $\mathbb{E}$-extension $\delta^{''}\in\mathbb{E}(E, A)$ realized by $\xymatrix{A\ar[r]^h&C\ar[r]^{h'}&E,}$
which satisfy the following compatibilities.

\begin{enumerate}
\item[(i)] $\xymatrix{D\ar[r]^d&E\ar[r]^{e}&F}$ realizes $f'_*\delta'$,

\item[(ii)] $d^*\delta^{''}=\delta$,

\item[(iii)] $f_*\delta^{''}=e^*\delta'$.
\end{enumerate}

\emph{(ET4)$^{\rm op}$} Dual of (ET4).

\begin{rem}
Note that both exact categories and triangulated categories are extriangulated categories $($see \cite[Example 2.13]{NP}$)$ and extension-closed subcategories of extriangulated categories are
again extriangulated $($see \cite[Remark 2.18]{NP}$)$. Moreover, there exist extriangulated categories which
are neither exact categories nor triangulated categories $($see \cite[Proposition 3.30]{NP}, \cite[Example 4.14]{ZZ} and \cite[Remark 3.3]{HZZ}$)$.
\end{rem}

Assume that $(\mathcal{C}, \mathbb{E}, \mathfrak{s})$ is an extriangulated category. By the Yoneda Lemma, any $\mathbb{E}$-extension $\delta\in \mathbb{E}(C, A)$ induces  natural transformations
 $\delta_\sharp: \mathcal{C}(-, C)\Rightarrow \mathbb{E}(-, A)$ $~~ ~$and$~~~$ $\delta^\sharp: \mathcal{C}(A, -)\Rightarrow \mathbb{E}(C, -)$.
For any $X\in\mathcal{C}$, these $(\delta_\sharp)_X$ and $\delta^\sharp_X$ are given as follows:

(1) $(\delta_\sharp)_X: \mathcal{C}(X, C)\rightarrow \mathbb{E}(X, A); ~f\mapsto f^*\delta.$

(2) $\delta^\sharp_X: \mathcal{C}(A, X)\rightarrow \mathbb{E}(C, X); ~g\mapsto g_*\delta.$
\begin{lem}\label{lem2} {\rm \cite[Corollary 3.12]{NP}}  Let $(\mathcal{C}, \mathbb{E}, \mathfrak{s})$ be an extriangulated category and $$\xymatrix@C=2em{A\ar[r]^{x}&B\ar[r]^{y}&C\ar@{-->}[r]^{\delta}&}$$ an $\mathbb{E}$-triangle. Then we have the following long exact sequences:

$\xymatrix@C=1cm{\mathcal{C}(C, -)\ar[r]^{\mathcal{C}(y, -)}&\mathcal{C}(B, -)\ar[r]^{\mathcal{C}(x, -)}&\mathcal{C}(A, -)\ar[r]^{\delta^\sharp}&\mathbb{E}(C, -)\ar[r]^{\mathbb{E}(y, -)}&\mathbb{E}(B, -)\ar[r]^{\mathbb{E}(x, -)}&\mathbb{E}(A, -),}$

$\xymatrix@C=1cm{\mathcal{C}(-, A)\ar[r]^{\mathcal{C}(-, x)}&\mathcal{C}(-, B)\ar[r]^{\mathcal{C}(-, y)}&\mathcal{C}(-, C)\ar[r]^{\delta_\sharp}&\mathbb{E}(-, A)\ar[r]^{\mathbb{E}(-, x)}&\mathbb{E}(-, B)\ar[r]^{\mathbb{E}(-, y)}&\mathbb{E}(-, C).}$

\end{lem}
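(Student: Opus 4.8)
The plan is to derive both long exact sequences directly from the extriangulated axioms (ET1)–(ET4) together with the Yoneda-type natural transformations $\delta^\sharp$ and $\delta_\sharp$, cutting the work in half by duality. Since the opposite triplet $(\mathcal{C}^{\mathrm{op}},\mathbb{E}^{\mathrm{op}},\mathfrak{s}^{\mathrm{op}})$ is again extriangulated with inflations and deflations interchanged, the second (contravariant) sequence is obtained from the first (covariant) one by passing to $\mathcal{C}^{\mathrm{op}}$; so I would only establish exactness of the covariant sequence, at each of its four internal terms $\mathcal{C}(B,-)$, $\mathcal{C}(A,-)$, $\mathbb{E}(C,-)$, $\mathbb{E}(B,-)$, evaluated at a fixed test object $X\in\mathcal{C}$. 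Throughout I would repeatedly use the three basic identities $yx=0$, $x_*\delta=0$ and $y^*\delta=0$, each forced by $\mathfrak{s}$ being a realization of $\mathbb{E}$; they are the extriangulated shadow of the fact that consecutive maps in a triangle compose to zero, and they make every composite in the sequence vanish.

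For the two $\mathcal{C}$-terms the factorizations come from weak (co)kernel properties. At $\mathcal{C}(B,X)$, given $b\colon B\to X$ with $bx=0$, I would invoke that $y$ is a weak cokernel of $x$—a standard consequence of the axioms (cf. (ET3))—to write $b=cy$, the reverse inclusion being immediate from $yx=0$. At $\mathcal{C}(A,X)$, the composite into $\mathbb{E}(C,X)$ vanishes because $(bx)_*\delta=b_*(x_*\delta)=0$; conversely, if $a\colon A\to X$ satisfies $a_*\delta=0$, then the $\mathbb{E}$-triangle realizing the pushout extension $a_*\delta$ splits, and inserting this splitting into the morphism of $\mathbb{E}$-triangles furnished by (ET3) produces $b\colon B\to X$ with $bx=a$.

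The genuine content lies in the $\mathbb{E}$-terms. At $\mathbb{E}(C,X)$ the composite vanishes since $y^*(a_*\delta)=a_*(y^*\delta)=0$, so I must show every $\rho\in\mathbb{E}(C,X)$ with $y^*\rho=0$ has the form $a_*\delta$. Realizing $\rho$ by an $\mathbb{E}$-triangle $X\xrightarrow{u}Y\xrightarrow{v}C$ and using $y^*\rho=0$ to lift $y$ to $\tilde y\colon B\to Y$ with $v\tilde y=y$, I would note $v\tilde y x=yx=0$, factor $\tilde y x=ua$ through the weak kernel $u$, and then verify $a_*\delta=\rho$ by applying (ET4) to the pair $(\delta,\rho)$ and matching the resulting extension against the compatibility conditions (i)–(iii). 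Exactness at $\mathbb{E}(B,X)$ is handled symmetrically: the composite vanishes because $x^*(y^*\rho)=(yx)^*\rho=0$, and for $\sigma\in\mathbb{E}(B,X)$ with $x^*\sigma=0$ I would build, via (ET4)$^{\mathrm{op}}$, an extension in $\mathbb{E}(C,X)$ whose pullback along $y$ recovers $\sigma$.

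The main obstacle I anticipate is precisely these two ``surjectivity onto the kernel'' steps in the $\mathbb{E}$-terms. Unlike the $\mathcal{C}$-term factorizations, they cannot be read off from a single weak (co)kernel property; instead they require producing a full $3\times 3$ commutative diagram from (ET4) and then carefully checking that conditions (i)–(iii) identify the constructed extension with the given one. Keeping straight which pushouts and pullbacks are being compared, and invoking (ET4)$^{\mathrm{op}}$ exactly where the asymmetry of the construction forces it, is the delicate bookkeeping on which the whole statement rests.
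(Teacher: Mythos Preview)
The paper does not supply its own proof of this lemma; it is stated as a citation of \cite[Corollary 3.12]{NP} and used as a black box. Your outline is essentially the argument given in that reference (reducing to one sequence by passing to the opposite extriangulated category, handling the $\mathcal{C}$-terms via the weak kernel/cokernel properties coming from (ET3)/(ET3)$^{\mathrm{op}}$, and handling the $\mathbb{E}$-terms via the octahedral-type diagrams of (ET4)/(ET4)$^{\mathrm{op}}$), so there is nothing to compare against and your approach is the expected one.

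One caution on the step you flagged yourself: at $\mathbb{E}(C,X)$, after producing $a\colon A\to X$ with $ua=\tilde y x$, the commuting squares alone do \emph{not} yet force $a_*\delta=\rho$; (ET3) would only give $a_*\delta=c^*\rho$ for \emph{some} $c\colon C\to C$ with $cy=y$, not necessarily $c=1_C$. The identification $a_*\delta=\rho$ really does require the (ET4) diagram and its compatibilities (i)--(iii), exactly as you anticipated, so be sure to carry that verification out explicitly rather than leaving it as a remark.
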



Suppose that $(\mathcal{C}, \mathbb{E}, \mathfrak{s})$ is an extriangulated category with enough projectives and injectives.
If $\xymatrix{A\ar[r]&P\ar[r]&C\ar@{-->}[r]&}$ is an $\E$-triangle with $P\in Proj(\mathcal{C})$,  then $A$ is called the \emph{syzygy} of $C$ and is denoted by $\Omega(C)$. Similarly, for any $\E$-triangle $\xymatrix{A\ar[r]&I\ar[r]&C\ar@{-->}[r]&}$ with $I\in Inj(\mathcal{C})$, $C$ is called the \emph{cosyzygy} of $A$ and is denoted by $\Sigma(A)$.

For a subcategory $\mathcal{B}\subseteq \mathcal{C}$, put $\Omega^{0}\mathcal{B}=\mathcal{B}$, and for $i>0$, we define $\Omega^{i}\mathcal{B}$ inductively to be the subcategory consisting of syzygies of objects in $\Omega^{i-1}\mathcal{B}$, i.e.,
$$\Omega^{i}\mathcal{B}=\Omega(\Omega^{i-1}\mathcal{B}).$$
We call $\Omega^{i}\mathcal{B}$ the $i$-th syzygy of $\mathcal{B}$. Dually we define the $i$-th cosyzygy $\Sigma^{i}\mathcal{B}$ by $\Sigma^{0}\mathcal{B}=\mathcal{B}$ and $\Sigma^{i}\mathcal{B}=\Sigma(\Sigma^{i-1}\mathcal{B})$ for $i>0.$

In \cite{LN}, Liu and Nakaoka defined higher extension groups in an extriangulated category having enough projectives and injectives as $$\E^{i+1}(X, Y)\cong\E(X, \Sigma^{i}Y)\cong\E(\Omega^{i}X, Y)$$ for $i\geq 0$, and they showed the following result:

\begin{lem}\label{lem4} \cite[Proposition 5.2]{LN} Let $\xymatrix{A\ar[r]^{x}&B\ar[r]^{y}&C\ar@{-->}[r]^{\delta}&}$ be an $\E$-triangle. For any object $X\in \mathcal{B}$, there are long exact sequences
$$\xymatrix@C=0.5cm{\cdots\ar[r] &\E^{i}(X, A)\ar[r]^{x_{*}}&\E^{i}(X, B)\ar[r]^{y_{*}}&\E^{i}(X, C)\ar[r]&\E^{i+1}(X, A)\ar[r]^{x_{*}}&\E^{i+1}(X, B)\ar[r]^{y_{*}}&\cdots(i\geq1),}$$
$$\xymatrix@C=0.5cm{\cdots\ar[r] &\E^{i}(C, X)\ar[r]^{y^{*}}&\E^{i}(B, X)\ar[r]^{x^{*}}&\E^{i}(A, X)\ar[r]&\E^{i+1}(C, X)\ar[r]^{y^{*}}&\E^{i+1}(B, X)\ar[r]^{x^{*}}&\cdots(i\geq1).}$$
\end{lem}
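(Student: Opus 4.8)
The plan is to prove the first long exact sequence; the second is its dual, obtained by running the same argument in the opposite category $\C^{\rm op}$, which is again extriangulated with the roles of projectives and injectives (hence of $\Omega$ and $\Sigma$) interchanged. So I fix $X$ and study the covariant functors $\E^{i}(X,-)$ applied to the given $\E$-triangle $A\xrightarrow{x}B\xrightarrow{y}C$ with extension $\delta$. The starting point is Lemma \ref{lem2}: its second sequence, evaluated at $X$, already supplies the exact segment $\C(X,A)\to\C(X,B)\to\C(X,C)\to\E(X,A)\xrightarrow{x_{*}}\E(X,B)\xrightarrow{y_{*}}\E(X,C)$, which is exactly the beginning of the desired sequence (recall $\E^{1}=\E$). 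Everything beyond this must be manufactured by dimension shifting through the defining isomorphism $\E^{i}(X,Y)\cong\E(X,\Sigma^{i-1}Y)$.

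The crucial preliminary step is a \emph{horseshoe lemma} for cosyzygies. I choose $\E$-triangles $A\to I_{A}\to\Sigma A$ and $C\to I_{C}\to\Sigma C$ with $I_{A},I_{C}$ injective. Applying $\C(-,I_{A})$ to the given triangle and using $\E(C,I_{A})=0$ shows that the embedding $A\to I_{A}$ extends along $x$ to a morphism $B\to I_{A}$; combining this with $B\xrightarrow{y}C\to I_{C}$ produces an inflation $B\to I_{A}\oplus I_{C}=:I_{B}$ whose restriction to $A$ is the inflation into $I_{A}$. A standard assembly via (ET4) and (ET4)$^{\rm op}$ then yields a $3\times3$ commutative diagram with $\E$-triangle rows and cosyzygy columns, in particular an $\E$-triangle $\Sigma A\to\Sigma B\to\Sigma C$ with a genuinely split middle column $I_{B}=I_{A}\oplus I_{C}$. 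Iterating, for every $i\ge 0$ I obtain $\E$-triangles $\Sigma^{i}A\to\Sigma^{i}B\to\Sigma^{i}C$ with extension $\delta_{i}$ (where $\delta_{0}=\delta$) and compatible injective cosyzygy columns that split degreewise.

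With the horseshoe in hand the proof becomes an induction on $i$. The isomorphism $\E^{i}(X,-)\cong\E^{i-1}(X,\Sigma(-))$ for $i\ge 2$ identifies the degree $\ge 2$ part of the sought sequence for $\delta$ with the degree $\ge 1$ part of the sequence for $\delta_{1}$; here one checks that $x_{*}$ on $\E^{i}$ corresponds to $(\Sigma x)_{*}$, a map that is well defined on $\E(X,-)$ because any morphism factoring through an injective induces zero (as $\E(X,I)=0$). Granting the statement for $\delta_{1}$ inductively, the whole sequence thus reduces to exactness of the single connecting segment $\E(X,B)\xrightarrow{y_{*}}\E(X,C)\xrightarrow{\partial}\E(X,\Sigma A)\xrightarrow{(\Sigma x)_{*}}\E(X,\Sigma B)$, in which $\E(X,\Sigma A)=\E^{2}(X,A)$. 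To construct $\partial$ and verify exactness there, I apply $\C(X,-)$ and $\E(X,-)$ to the $3\times3$ diagram relating $\delta_{0}$ and $\delta_{1}$: the injectivity of the column middle terms kills the relevant $\E(X,I)$-groups, and a snake-lemma chase in $\mathrm{Ab}$ produces the connecting homomorphism and exactness at $\E(X,C)$ and $\E(X,\Sigma A)$. Setting $\partial_{i}$ to be the $\Sigma^{i-1}$-shift of $\partial$ closes the induction.

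The main obstacle is the horseshoe construction itself: producing the $3\times3$ diagram with $\E$-triangle rows, cosyzygy columns, and a genuinely split middle column, and confirming at each stage that $\Sigma^{i}A\to\Sigma^{i}B\to\Sigma^{i}C$ is honestly an $\E$-triangle, demands careful bookkeeping with (ET4), (ET4)$^{\rm op}$, and the pushout/pullback properties of extriangulated categories. The second delicate point is independence of choices: since $\Sigma$ is only defined up to injective summands, I must check that $\partial$, the identifications $\E^{i}(X,-)\cong\E(X,\Sigma^{i-1}-)$, and the induced maps $x_{*},y_{*}$ are independent of the chosen cosyzygies and that $\partial$ is natural, all of which rests repeatedly on the vanishing $\E(X,I)=0$ for injective $I$. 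These compatibility verifications, rather than any isolated hard idea, are the real labor of the proof.
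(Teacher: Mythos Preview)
Your proposal is a sound reconstruction of the standard argument---dimension shifting via a cosyzygy horseshoe built from the injectivity of the $I$'s and the axioms (ET4)/(ET4)$^{\rm op}$, with Lemma~\ref{lem2} supplying the base segment and the vanishing $\E(X,I)=0$ handling the connecting map and independence of choices. I see no real gap; the points you flag (that $B\to I_A\oplus I_C$ is an inflation with cone an $\E$-triangle $\Sigma A\to\Sigma B\to\Sigma C$, and well-definedness of $\Sigma$ on $\E$-groups) are precisely the places where the bookkeeping has to be done, and they go through as you indicate.

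That said, there is nothing to compare your argument against: the paper does not prove this lemma at all. It is stated with an attribution to \cite[Proposition~5.2]{LN} and used as a black box, so the paper's ``proof'' is simply the citation. Your write-up is therefore not an alternative to the paper's approach but rather a sketch of the proof that Liu and Nakaoka carry out in the cited reference, and it matches the strategy used there.
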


{\bf From now on to the end of the paper, we always suppose that $\mathcal{C}:=(\mathcal{C}, \mathbb{E}, \mathfrak{s})$ is an extriangulated category with enough projectives and injectives.}

\section{\bf Computation of coresolution dimensions}\label{coresolution-dimension}
This section is devoted to preparations for the proof of our main results in this paper.
First, we introduce the notion of coresolution dimensions of objects in $\mathcal{C}$, and then
give some criteria for computing coresolution dimensions relative to different subcategories in $\mathcal{C}$.

\begin{df} Let $\mathcal{X}$ be a subcategory of $\mathcal{C}$.
\begin{enumerate}
\item For any non-negative integer $n$, we denote by $\widecheck{\X_{n}}$ (resp., $\widehat{\X_{n}}$) the class of objects $C\in \C$ such that there exists an $\E$-triangle sequence ~\begin{center}$C\rightarrow X_{0}\rightarrow\cdots\rightarrow X_{-n+1}\rightarrow X_{-n}$ (resp., $X_{n}\rightarrow X_{n-1}\rightarrow\cdots\rightarrow X_{0}\rightarrow C$)
    \end{center} with each $X_{i}\in \mathcal{X}$. Moreover, we set $$~\widecheck{\X}=\bigcup\limits_{n=0}^\infty \widecheck{\X_{n}}, \ \widehat{\X}=\bigcup\limits_{n=0}^\infty \widehat{{\X_n}}.$$

\item For any $C\in \mathcal{C}$, the \emph{$\mathcal{X}$-coresolution dimension} of $C$ is defined as
\vspace{2mm}
\begin{center}
coresdim$_{\mathcal{X}}(C)$ := min$\{n\in\mathbb{N}:C\in\w{\mathcal{X}_{n}}\}$.
\end{center}
\vspace{2mm}
If $C\not\in\w{\X_{n}}$ for any $n\in \mathbb{N}$, then coresdim$_{\X}(C)=\infty$.
\end{enumerate}
\end{df}

Recall from the introduction that $$\mathcal{X}^{\perp}=\{Y\in\mathcal{C} \ | \ \E^{i}(X, Y)=0 \mathrm{~for~all~} i\geq 1, \mathrm{~and~all~} X \in \mathcal{X}\}$$ for any subcategory $\mathcal{X}$ of $\mathcal{C}$. Similarly, we can define $^{\perp}\mathcal{X}.$ The following definition is an extriangulated analog of \cite[Definition 5.1]{MSSS1}.
\begin{df}
Let $\mathcal{X}$ and $\mathcal{W}$ be two subcategories of $\mathcal{C}$. We say that
\begin{enumerate}
\item $\mathcal{W}$ is a \emph{generator} for $\mathcal{X}$, if $\mathcal{W}\subseteq\mathcal{X}$ and for each object $X\in \mathcal{X}$, there exists an $\E$-triangle $\xymatrix@C=0.6cm{X'\ar[r]^{}&W\ar[r]^{}&X\ar@{-->}[r]^{}&}$ with $W\in \W$ and $X^{'}\in \X$.

\item $\mathcal{W}$ is \emph{$\mathcal{X}$-projective} if $\W\subseteq{^{\perp}\mathcal{X}}$.

\item $\mathcal{W}$ is an \emph{$\mathcal{X}$-projective generator} for $\X$ if $\mathcal{W}$ is a generator for $\mathcal{X}$ and $\W\subseteq{^{\perp}\mathcal{X}}$.
\end{enumerate}
\end{df}


\begin{ex}(1) Assume that $\C=R$-$\mathrm{Mod}$ is the category of left $R$-modules for a ring $R$. A left $R$-module $N$ is called \emph{Gorenstein projective} \cite{EJ2,Holm}~if there is an exact sequence of projective left $R$-modules
$$\mathbf{P}= \cdots\rightarrow P_{1}\rightarrow P_{0}\rightarrow P_{-1}\rightarrow P_{-2}\rightarrow\cdots$$
with $N\cong\mathrm{Ker}(P_{-1}\rightarrow P_{-2})$ such that $\mathrm{Hom}_{R}(\mathbf{P},Q)$ is exact for any projective left $R$-module $Q$. Let $\mathcal{GP}(R)$ be the subcategory of $R$-$\mathrm{Mod}$ consisting of all Gorenstein projective modules and $\mathcal{P}(R)$ the subcategory of  $R$-$\mathrm{Mod}$ consisting of all projective modules. Then $\mathcal{P}(R)$ is a $\mathcal{GP}(R)$-projective generator for $\mathcal{GP}(R)$.

(2) Let $\C$ be a triangulated category and $\mathcal{M}$ a silting subcategory. Then $\mathcal{M}$ is an $\mathcal{M}^{\perp}$-projective generator by \cite[Lemma 2.6]{DLWW}.

(3) Recall from \cite{ZZ1} that a subcategory $\mathcal{X}\subseteq\mathcal{C}$ is called  \emph{tilting} if $\mathcal{X}$ is a generator for $\mathcal{X}^{\perp}$ and $\mathcal{X}\subseteq\widehat{\mathcal{H}}$, where $\mathcal{H}=Proj(\mathcal{C})$ is the class of projective objects in $\mathcal{C}$. It follows that $\mathcal{X}$ is an $\mathcal{X}^{\perp}$-projective generator for $ \mathcal{X}^{\perp}$.
\end{ex}

In the following, let $\X$ and $\W$ be two subcategories of $\C$ such that $\W\subseteq\X$.

\begin{lem}\label{lem6}
Suppose that $\mathcal{X}$ is an extension-closed subcategory of $\mathcal{C}$. Consider two $\E$-triangles
$\xymatrix@C=0.6cm{N\ar[r]^{a}&X_{1}\ar[r]^{b}&D\ar@{-->}[r]^{}&}$ and   $\xymatrix@C=0.6cm{D\ar[r]^{c}&X_{0}\ar[r]^{d}&M\ar@{-->}[r]^{}&}$
 in $\mathcal{C}$ with $X_{0}, X_{1}\in \mathcal{X}$. If $\mathcal{W}$ is a generator for $\mathcal{X}$, then there exist two $\E$-triangles
$\xymatrix@C=0.6cm{N\ar[r]^{}&X_{1}'\ar[r]^{}&D'\ar@{-->}[r]^{}&}$ and $\xymatrix@C=0.6cm{D'\ar[r]^{}&W_{0}\ar[r]^{}&M\ar@{-->}[r]^{}&}$
 with $W_{0}\in \mathcal{W}$ and $X_{1}'\in\mathcal{X}$.
\end{lem}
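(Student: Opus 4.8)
The plan is to make a pullback/pushout construction using axiom (ET4)$^{\rm op}$ to replace the surjection-like data coming from the generator $\W$. The idea is that the generator property gives us, for the object $X_0\in\X$, an $\E$-triangle
$$\xymatrix@C=0.6cm{X_0'\ar[r]^{}&W_0\ar[r]^{w}&X_0\ar@{-->}[r]^{}&}$$
with $W_0\in\W$ and $X_0'\in\X$. The target is to ``pull this $W_0$ back along $d:X_0\to M$'' so that $W_0$ maps onto $M$, while keeping the cocone inside $\X$. Concretely, I would first form the composite deflation $X_0\xrightarrow{d} M$ and the deflation $w:W_0\to X_0$, and then apply (ET4)$^{\rm op}$ to the pair of $\E$-triangles realizing the $\E$-extensions attached to $d$ and $w$. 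This should produce an object $D'$ sitting in a commutative $3\times 3$ diagram together with two new $\E$-triangles: one of the form $\xymatrix@C=0.5cm{D'\ar[r]&W_0\ar[r]&M\ar@{-->}[r]&}$ (the bottom/right one I want), and one relating $D'$ to the old cocone $D$ and to $X_0'$.

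Next I would extract from that diagram the $\E$-triangle connecting $N$, a new object $X_1'$, and $D'$. The vertical/horizontal edges of the (ET4)$^{\rm op}$ octahedron will give an $\E$-triangle $\xymatrix@C=0.5cm{X_0'\ar[r]&D'\ar[r]&D\ar@{-->}[r]&}$ (comparing the new cocone $D'$ with the old one $D$, the discrepancy being exactly the cocone $X_0'$ of $w$). I then want to splice this with the original first $\E$-triangle $\xymatrix@C=0.5cm{N\ar[r]^{a}&X_1\ar[r]&D\ar@{-->}[r]&}$. Using (ET4) (or (ET4)$^{\rm op}$ again) on these two $\E$-triangles, whose common object is $D$, I obtain an object $X_1'$ fitting into an $\E$-triangle $\xymatrix@C=0.5cm{N\ar[r]&X_1'\ar[r]&D'\ar@{-->}[r]&}$, together with an $\E$-triangle $\xymatrix@C=0.5cm{X_1\ar[r]&X_1'\ar[r]&X_0'\ar@{-->}[r]&}$. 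The point of this last triangle is that it exhibits $X_1'$ as an extension of $X_0'\in\X$ by $X_1\in\X$, so by the hypothesis that $\X$ is extension-closed we conclude $X_1'\in\X$, which is precisely what we need.

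Assembling these, the two desired $\E$-triangles are $\xymatrix@C=0.5cm{N\ar[r]&X_1'\ar[r]&D'\ar@{-->}[r]&}$ with $X_1'\in\X$, and $\xymatrix@C=0.5cm{D'\ar[r]&W_0\ar[r]&M\ar@{-->}[r]&}$ with $W_0\in\W$, finishing the proof. Throughout, I would use Lemma \ref{lem2} to track the maps and to verify that the composites and the induced $\E$-extensions match up correctly.

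The main obstacle I anticipate is bookkeeping: correctly identifying which $\E$-extensions to feed into (ET4)/(ET4)$^{\rm op}$ and in which order, and verifying the compatibility conditions (i)--(iii) so that the edges of the resulting diagrams are genuinely the $\E$-triangles claimed above. In a triangulated category this is the standard octahedral splicing, but in the extriangulated setting one must be careful that each edge really \emph{realizes} the expected $\E$-extension rather than merely being a morphism, and that the cocone $X_0'$ lands in $\X$ (which it does by the generator hypothesis). The single crucial non-formal input is the extension-closedness of $\X$, invoked exactly once to deduce $X_1'\in\X$ from the triangle $\xymatrix@C=0.5cm{X_1\ar[r]&X_1'\ar[r]&X_0'\ar@{-->}[r]&}$; everything else is diagram chasing via the octahedral axioms.
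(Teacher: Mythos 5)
Your construction follows the paper's proof almost line for line: take the generator triangle $X_0'\rightarrow W_0\stackrel{w}\rightarrow X_0$ with $W_0\in\W$, $X_0'\in\X$; apply (ET4)$^{\rm op}$ to the composable deflations $w$ and $d$ to obtain $D'$ together with the $\E$-triangles $X_0'\rightarrow D'\rightarrow D$ and $D'\rightarrow W_0\rightarrow M$; then compare with the original triangle $N\rightarrow X_1\rightarrow D$ to produce $X_1'$; and finally invoke extension-closedness of $\X$ exactly once. The first and last steps are correct as you state them, and the two output triangles are the right ones.

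The one step that does not work as you cite it is the middle splicing: the two $\E$-triangles $X_0'\rightarrow D'\rightarrow D$ and $N\rightarrow X_1\rightarrow D$ share their \emph{cone} $D$, and neither (ET4) nor (ET4)$^{\rm op}$ applies to such a pair, since those axioms take as input two composable inflations (resp.\ two composable deflations), and here there is no composable pair at all. In a triangulated category you would rotate one triangle and then use the octahedron, but rotation of $\E$-triangles is precisely what is unavailable in an extriangulated category, so this is the step where your worry about ``standard octahedral splicing'' is justified. The correct tool is the shared-cone (pullback-type) statement \cite[Proposition 3.15]{NP}, which is exactly what the paper invokes: it produces $X_1'$ together with $\E$-triangles $N\rightarrow X_1'\rightarrow D'$ and $X_0'\rightarrow X_1'\rightarrow X_1$. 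Note also that this last triangle is oriented oppositely to the one you wrote ($X_1\rightarrow X_1'\rightarrow X_0'$): since $X_1'$ plays the role of a pullback of $D'\rightarrow D\leftarrow X_1$, its cocone over $X_1$ is $X_0'$, not the reverse. This swap is harmless for your argument, because either way $X_1'$ is an extension of objects of $\X$ and extension-closedness yields $X_1'\in\X$. With \cite[Proposition 3.15]{NP} substituted for ``(ET4) or (ET4)$^{\rm op}$ again,'' your proposal coincides with the paper's proof.
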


\begin{proof}
 Note that $\mathcal{W}$ is a generator for $\mathcal{X}$. Then we have an $\E$-triangle $\xymatrix@C=0.6cm{X\ar[r]^{}&W_{0}\ar[r]^{}&X_{0}\ar@{-->}[r]^{}&}$with $W_{0}\in\W$ and $X\in\X$. By $(ET4)^{op}$, we obtain a commutative diagram in $\C$
$$\xymatrix{X\ar[r]^{}\ar@{=}[d]&D^{'}\ar[r]^{}\ar[d]_{}&D\ar[d]^{}&\\
X\ar[r]^{}&W_{0}\ar[r]^{}\ar[d]_{}&X_{0}\ar[d]^{}&\\
&M\ar@{=}[r]&M.}$$
 By \cite[Proposition 3.15]{NP}, we have the following commutative diagram in $\C$

$$\xymatrix{
&N\ar[d]_{} \ar@{=}[r] & N \ar[d]^{} \\
  X \ar@{=}[d] \ar[r]^{} & X_{1}' \ar[d]_{} \ar[r]^{} & X_{1}\ar[d]^{} \\
  X \ar[r]^{} & D'\ar[r]^{} & D.}$$
Since $\X$ is closed under extensions, it follows that $X_{1}'\in \X$. By the above two diagrams, we have the desired $\E$-triangles.
\end{proof}

\begin{lem}\label{lem7}
Suppose that $\X$ is closed under extensions and $\W$ is a generator for $\X$. Then for any $C\in\X$ and non-negative integer $n$, $C\in\w{\X_{n}}$ if and only if there exists an $\E$-triangle sequence
$$C\rightarrow X_{0}\rightarrow W_{-1}\rightarrow\cdots \rightarrow W_{-n+1}\rightarrow W_{-n}$$ with $X_{0}\in \X$ and $W_{-i}\in\W$ for $1\leq i\leq n$.
\end{lem}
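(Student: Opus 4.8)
The ``if'' part is immediate: since $\W\subseteq\X$, every object occurring in the displayed sequence lies in $\X$, so the sequence itself witnesses $C\in\w{\X_{n}}$. For the converse I would fix an $\X$-coresolution $C\to X_{0}\to X_{-1}\to\cdots\to X_{-n}$ together with its defining $\E$-triangles $C\to X_{0}\to K_{1}$ and $K_{i}\to X_{-i}\to K_{i+1}$ for $1\le i\le n-1$, where $K_{n}=X_{-n}$, and then replace $X_{-1},\dots,X_{-n}$ one term at a time by objects of $\W$.

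The replacement of an interior term $X_{-i}$, which is the middle term of the consecutive pair $K_{i-1}\to X_{-i+1}\to K_{i}$ and $K_{i}\to X_{-i}\to K_{i+1}$, is exactly the content of Lemma~\ref{lem6}: it fixes the two outer objects $K_{i-1}$ and $K_{i+1}$, turns $X_{-i}$ into some $W_{-i}\in\W$, and replaces the neighbour $X_{-i+1}$ directly above it by a new object that is again in $\X$ (extension-closedness of $\X$ being used inside Lemma~\ref{lem6}). The last term $X_{-n}=K_{n}$ is not the middle term of any triangle, so it needs a separate step: using that $\W$ is a generator for $\X$, I would pick an $\E$-triangle $Y\to W_{-n}\to X_{-n}$ with $W_{-n}\in\W$ and $Y\in\X$, and form the pullback of the two deflations $X_{-n+1}\to X_{-n}$ and $W_{-n}\to X_{-n}$. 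By $\mathrm{(ET4)}^{\mathrm{op}}$ and \cite[Proposition 3.15]{NP} this produces $\E$-triangles $Y\to P\to X_{-n+1}$ and $K_{n-1}\to P\to W_{-n}$, and extension-closedness gives $P\in\X$; thus the bottom two terms of the coresolution become $P\in\X$ and $W_{-n}\in\W$.

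I would then run these replacements from the bottom upward: first convert $X_{-n}$ by the pullback step above, and then convert $X_{-n+1},X_{-n+2},\dots,X_{-1}$ in turn by repeated application of Lemma~\ref{lem6}. The crucial bookkeeping observation is that each step alters only the term immediately above the one being converted (which has not yet been treated), while leaving the already-converted lower portion intact, since that portion sits inside the fixed outer object $M$ of Lemma~\ref{lem6}. After all $n$ steps, positions $-1,\dots,-n$ carry objects of $\W$, position $0$ still carries an object of $\X$, and $C$ is unchanged, yielding the required $\E$-triangle sequence.

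The main obstacle is precisely that the cosyzygies $K_{i}$ of the original coresolution need not lie in $\X$ — the subcategory $\X$ is only assumed extension-closed, not coresolving — so one cannot peel off a cosyzygy and induct on the length in the naive way. This is exactly what dictates the bottom-up order: converting a term via Lemma~\ref{lem6} pushes a genuine $\X$-object (not a $\W$-object) into the slot just above it, so a $\W$-term sitting above would be spoiled, whereas a $\W$-term sitting below is preserved. Keeping the order of replacement and the fixed-versus-modified roles straight in each invocation of Lemma~\ref{lem6} is the delicate point, and the pullback argument for the final term is the one place where a tool beyond Lemma~\ref{lem6} is needed.
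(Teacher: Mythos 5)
Your proof is correct, and its engine is the same as the paper's — repeated application of Lemma \ref{lem6}, working from the bottom of the coresolution upward — but the packaging differs in two ways worth recording. First, the paper argues by induction on $n$: peel off the top conflation $C\to X_0'\to K$ with $K\in\w{\X_{n-1}}$, convert the coresolution of $K$ by the inductive hypothesis, then apply Lemma \ref{lem6} once to splice; your in-place sweep is exactly this recursion unrolled. Note, however, that the paper's inductive hypothesis is applied to the cosyzygy $K$, which need not lie in $\X$, so its induction only makes sense if the statement is read for arbitrary $C\in\C$ (harmless, since the hypothesis $C\in\X$ is never used in the proof); your iteration never invokes the statement for a cosyzygy and so respects the stated hypothesis as written — this is precisely the ``main obstacle'' you identified, which the paper resolves by implicit strengthening rather than, as you suggest, being blocked by it. Second, your separate pullback step for the last term is not really a tool beyond Lemma \ref{lem6}: it is Lemma \ref{lem6} applied to the pair of conflations $K_{n-1}\to X_{-n+1}\to X_{-n}$ and the degenerate triangle $X_{-n}\xrightarrow{\,=\,}X_{-n}\to 0$ (this is literally the paper's $n=1$ base case), and the proof of Lemma \ref{lem6} is exactly the $\mathrm{(ET4)^{op}}$/\cite[Proposition 3.15]{NP} pullback you spell out, so you have simply inlined it. Your bookkeeping claim — each step modifies only the target term, the $\X$-term directly above it, and the cosyzygy between them, while fixing everything at or below the outer object $M$ — is exactly right, and it is what makes the bottom-up order, in either formulation, go through.
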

\begin{proof}
 The ``if'' part is trivial. We prove the ``only if'' part by induction on $n$. If $n=1$, then there exist $\E$-triangles
  $\xymatrix@C=0.6cm{C\ar[r]^{}&X_{0}\ar[r]^{}&X_{-1}\ar@{-->}[r]^{}&}$ and $\xymatrix@C=0.6cm{X_{-1}\ar[r]^{}&X_{-1}\ar[r]^{}&0\ar@{-->}[r]^{}&}$ with $X_{-1}, X_0\in \X$.
   By Lemma \ref{lem6}, we obtain an $\E$-triangle $\xymatrix@C=0.6cm{C\ar[r]^{}&X_{0}'\ar[r]^{}&W_{-1}\ar@{-->}[r]^{}&}$ with $X_{0}'\in \X$ and $W_{-1}\in \W$, as desired.

 Suppose now $n\geq 2$. Then we have an $\E$-triangle $\xymatrix@C=0.6cm{C\ar[r]^{}&X_{0}'\ar[r]^{}&K\ar@{-->}[r]^{}&}$
with $X_{0}' \in \X$ and $K \in \w{\X_{n-1}}$. By induction, there exists an $\E$-triangle sequence
 $K\rightarrow X_{-1}\stackrel{f} \longrightarrow W_{-2}\rightarrow\cdots\rightarrow W_{-n}$ with $X_{-1} \in \X$ and
 $W_{-i} \in \W$ for $2\leq i\leq n$.
  Hence we have an $\E$-triangle sequence $K^{'}\stackrel{f_{1}}{\longrightarrow} W_{-2}\rightarrow \cdots \rightarrow W_{-n}$ and an $\E$-triangle $\xymatrix@C=0.6cm{K\ar[r]&X_{-1}\ar[r]^{f_2}&K'\ar@{-->}[r]^{}&}$ with $f_1f_2=f$.
 Applying Lemma \ref{lem6} to $\E$-triangles
 $\xymatrix@C=0.6cm{C\ar[r]^{}&X_{0}'\ar[r]^{}&K\ar@{-->}[r]^{}&}$ and $\xymatrix@C=0.6cm{K\ar[r]&X_{-1}\ar[r]^{f_2}&K'\ar@{-->}[r]^{}&,}$
  we obtain two $\E$-triangles
  $\xymatrix@C=0.6cm{C\ar[r]^{}&X_0\ar[r]&K''\ar@{-->}[r]^{}&}$ and $\xymatrix@C=0.6cm{K''\ar[r]^{}&W_{-1}\ar[r]^{}&K'\ar@{-->}[r]^{}&}$ with $X_{0}\in \X$ and $W_{-1}\in\W$.
   Therefore, we have an $\E$-triangle sequence
 $C\rightarrow X_{0}\rightarrow W_{-1}\rightarrow\cdots \rightarrow W_{-n+1}\rightarrow W_{-n}$ with $X_{0}\in \X$ and $W_{-i}\in\W$ for $1\leq i\leq n$.
\end{proof}

%
%
The following proposition is an extriangulated analog of \cite[Theorem 2.2]{DLWW} and \cite[Theorem 5.4]{MSSS1}. It shows that any object in $\w{\X}$ admits two $\E$-triangles: one giving rise to a left $\X$-approximation and the other to a right $\w{\W}$-approximation.
\begin{prop}\label{thm2}
Suppose that $\X$ is closed under extensions and $\W$ is a generator for $\X$. Consider the following conditions:
\begin{enumerate}
\item $C$ is in $\w{\X_n}.$

\item There exists an $\E$-triangle
$\xymatrix{C\ar[r]^{\psi^{C}}&X^{C}\ar[r]^{}&Y^{C}\ar@{-->}[r]^{\theta}&}$
with $X^{C}\in \X$ and $Y^{C}\in \w{\W_{n-1}}.$

\item There exists an $\E$-triangle
$\xymatrix{X_{C}\ar[r]^{}&Y_{C}\ar[r]^{\varphi_{C}}&C\ar@{-->}[r]^{\delta}&}$
with $X_{C}\in \X$ and $Y_{C}\in \w{\W_{n}}.$

\end{enumerate}
Then $(1)\Leftrightarrow(2)\Rightarrow(3)$. If $\X$ is also closed under cones of inflations, then $(3)\Rightarrow(2)$, and hence all three conditions are equivalent. If $\W$ is $\X$-projective, then $\psi^{C}$ is a left $\X$-approximation of $C$ and  $\varphi_{C}$ is a right $\w{\W}$-approximation of $C$.

\end{prop}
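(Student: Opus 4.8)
The plan is to establish the cycle of implications $(1)\Leftrightarrow(2)\Rightarrow(3)$ by induction on $n$, using Lemma \ref{lem7} as the structural backbone, and then to close the loop $(3)\Rightarrow(2)$ under the extra hypothesis by a dual/symmetric argument. The key observation driving everything is that Lemma \ref{lem7} lets us replace an arbitrary $\E$-triangle sequence witnessing $C\in\w{\X_n}$ by one in which all but the first term lie in $\W$; that is exactly the normal form encoded in conditions (2) and (3), so most of the work is bookkeeping around a single such sequence.

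\emph{The implication $(1)\Rightarrow(2)$.}
Assume $C\in\w{\X_n}$. First I would handle the degenerate case: if $C\in\X$ we may use Lemma \ref{lem7} directly, but for a general $C\in\w{\X_n}$ I would first produce, from the definition of $\w{\X_n}$, an $\E$-triangle $\xymatrix@C=0.6cm{C\ar[r]&X_0\ar[r]&K\ar@{-->}[r]&}$ with $X_0\in\X$ and $K\in\w{\X_{n-1}}$ (splitting off the first map of the defining sequence). By induction on $n$, applied to $K$, we get an $\E$-triangle $\xymatrix@C=0.6cm{K\ar[r]&X'\ar[r]&Y'\ar@{-->}[r]&}$ with $X'\in\X$ and $Y'\in\w{\W_{n-2}}$; the base case $n=0$ is trivial since then $C\in\X$ and we take $Y^C=0$. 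The heart of the step is to splice these two $\E$-triangles. I would invoke (ET4) to the composable pair of inflations $C\to X_0$ and $X_0\to$ (a term built from $X'$), obtaining a commutative $3\times 3$ diagram whose middle column realizes the desired $\E$-triangle $\xymatrix@C=0.6cm{C\ar[r]^{\psi^C}&X^C\ar[r]&Y^C\ar@{-->}[r]&}$ with $X^C\in\X$ (using that $\X$ is extension-closed) and $Y^C\in\w{\W_{n-1}}$. Here Lemma \ref{lem7} is what guarantees the cosyzygy terms can be taken in $\W$ rather than merely in $\X$, which is what upgrades $Y^C$ into $\w{\W_{n-1}}$ rather than $\w{\X_{n-1}}$.

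\emph{The remaining implications.}
For $(2)\Rightarrow(1)$ I would simply note that $X^C\in\X$ and $Y^C\in\w{\W_{n-1}}\subseteq\w{\X_{n-1}}$, so the $\E$-triangle of (2) prepends an $\X$-term to an $\E$-triangle sequence of length $n-1$, placing $C$ in $\w{\X_n}$. For $(2)\Rightarrow(3)$ I would dualize: from the generator property of $\W$ applied to $X^C$ we get an $\E$-triangle $\xymatrix@C=0.6cm{X'\ar[r]&W\ar[r]&X^C\ar@{-->}[r]&}$ with $W\in\W$ and $X'\in\X$, and then pull back the $\E$-triangle of (2) along $W\twoheadrightarrow X^C$ (or apply (ET4)$^{\mathrm{op}}$) to produce the $\E$-triangle $\xymatrix@C=0.6cm{X_C\ar[r]&Y_C\ar[r]^{\varphi_C}&C\ar@{-->}[r]&}$ with $Y_C$ assembled from $W$ and $Y^C$, hence in $\w{\W_n}$, and $X_C=X'\in\X$. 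For $(3)\Rightarrow(2)$ under the additional assumption that $\X$ is closed under cones of inflations, I would run the symmetric splicing: the $\E$-triangle of (3) together with an approximation of $Y_C$ yields, via (ET4)$^{\mathrm{op}}$, the $\E$-triangle of (2), where closure of $\X$ under cones is exactly what is needed to keep $X^C$ inside $\X$. Finally, the approximation claims follow from the long exact sequences of Lemma \ref{lem4}: when $\W\subseteq{}^\perp\X$, applying $\C(-,X')$ for $X'\in\X$ to the $\E$-triangle of (2) kills the relevant $\E^1$ term (since $Y^C\in\w{\W_{n-1}}$ has a $\W$-coresolution and $\E^i(\W,\X)=0$), forcing $\C(\psi^C,X')$ to be surjective; dually for $\varphi_C$ being a right $\w{\W}$-approximation.

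\emph{Main obstacle.}
The technically delicate point is the inductive splicing in $(1)\Rightarrow(2)$: keeping track of which octahedral axiom to invoke and verifying that the newly produced middle object genuinely lands in $\X$ rather than only in $\w{\X}$. This is precisely where the hypothesis that $\X$ is extension-closed (and, for $(3)\Rightarrow(2)$, closed under cones of inflations) is consumed, and where Lemma \ref{lem6} and Lemma \ref{lem7} must be deployed carefully so that the $\W$-terms propagate correctly through the diagram. I expect the bookkeeping of the connecting maps $f=f_1f_2$ in the style of Lemma \ref{lem7} to be the part demanding the most care.
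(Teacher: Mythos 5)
Your proposal is correct and takes essentially the same route as the paper: the paper obtains $(1)\Leftrightarrow(2)$ by citing Lemma \ref{lem7} directly (your induction via Lemma \ref{lem6} just re-runs that lemma's own proof), proves $(2)\Rightarrow(3)$ and $(3)\Rightarrow(2)$ by exactly the generator-triangle/coresolution-triangle splicing diagrams you describe, and gets the approximation statements from the same long-exact-sequence and dimension-shifting argument. One cosmetic slip: in $(3)\Rightarrow(2)$ one composes the inflations $X_C\rightarrow Y_C$ and $Y_C\rightarrow W$, so the axiom invoked there is (ET4), not (ET4)$^{\mathrm{op}}$.
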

\begin{proof}
$(1)\Leftrightarrow(2)$ follows from Lemma \ref{lem7}.

$(2)\Rightarrow(3)$. Since $X^{C}\in\X$ and $\W$ is a generator for $\X$, we have an $\E$-triangle $$\xymatrix@C=0.5cm{X_{C}\ar[r]^{}&W\ar[r]^{}&X^{C}\ar@{-->}[r]^{}&}$$ with $W\in\W$ and $X_{C}\in\X$.
By $(ET4)^{op}$, we obtain a commutative diagram
$$\xymatrix{X_{C}\ar[r]^{}\ar@{=}[d]&Y_{C}\ar[r]^{}\ar[d]_{}&C\ar[d]^{}&\\
X_{C}\ar[r]^{}&W\ar[r]^{}\ar[d]_{}&X^{C}\ar[d]^{}&\\
&Y^{C}\ar@{=}[r]&Y^{C}.}$$
From the second column, it follows that $Y_{C}\in \w{\W_{n}}$. Hence the first row is the desired one.

Suppose that $\X$ is also closed under cones of inflations. We show $(3)\Rightarrow(2)$. Since $Y_{C}\in \w{\W_{n}}$, we have an $\E$-triangle
$\xymatrix@C=0.6cm{Y_{C}\ar[r]^{}&W\ar[r]^{}&Y^{C}\ar@{-->}[r]^{}&}$
with $W\in \W$ and $Y^{C}\in \w{\W_{n-1}}$. By $(ET4)$, we obtain a commutative diagram
$$\xymatrix{X_{C}\ar[r]^{}\ar@{=}[d]&Y_{C}\ar[r]^{}\ar[d]_{}&C\ar[d]^{}&\\
X_{C}\ar[r]^{}&W\ar[r]^{}\ar[d]_{}&X^{C}\ar[d]^{}&\\
&Y^{C}\ar@{=}[r]&Y^{C}.}$$ Since $\X$ is closed under cones of inflations, it follows that $X^{C}\in \X$. Hence the third column is the desired $\E$-triangle.

Assume that $\W$ is $\X$-projective. Applying $\mathcal{C}(-,X)$ to the $\E$-triangle
$$\xymatrix{C\ar[r]^{\psi^{C}}&X^{C}\ar[r]^{}&Y^{C}\ar@{-->}[r]^{\theta}&}$$
for any $X\in\X$, we have an exact sequence
$\xymatrix@C=0.6cm{\mathcal{C}(X^{C}, X)\ar[rr]^{\mathcal{C}(\psi^{C},X)}&&{\mathcal{C}}(C, X)\ar[r]^{}&\E(Y^{C}, X).}$
 Since $\E^{i}(\W,X)=0$ for all $i\geq1$, it is easy to see that $\E(Y^{C}, X)=0$. It follows that $\C(\psi^{C}, C)$ is an epimorphism, hence $\psi^{C}$ is a left $\X$-approximation of $C$. Similarly, we can prove that $\varphi_{C}$ is a right $\w{\W}$-approximation of $C$.
\end{proof}

%
%

\begin{cor}\label{prop2}
Let $\X$ be closed under extensions such that $\W$ is an $\X$-projective generator for $\X$. Then $\w{\X}$ is closed under extensions.
\end{cor}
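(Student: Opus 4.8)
The plan is to prove a slightly stronger statement by induction on the $\X$-coresolution dimension of the \emph{left-hand} term: for every $\E$-triangle $A\to B\to C$ with $\mathrm{coresdim}_{\X}(A)\le m$ and $C\in\w{\X}$, one has $B\in\w{\X}$. The assertion of the corollary is then the case $m=\mathrm{coresdim}_{\X}(A)<\infty$. Throughout I would rely on two tools. First, Proposition \ref{thm2} (applicable since $\W$ is an $\X$-projective generator for $\X$), which supplies for each $D\in\w{\X_k}$ an $\E$-triangle $D\to X^{D}\to Y^{D}$ with $X^{D}\in\X$ and $Y^{D}\in\w{\W_{k-1}}$. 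Second, the splicing principle built into the definition of $\w{\X}$: an $\E$-triangle $B\to X_{0}\to Z$ with $X_{0}\in\X$ and $Z\in\w{\X}$ forces $B\in\w{\X}$, since one concatenates an $\X$-coresolution of $Z$ after $B\to X_{0}$. The whole difficulty is arranging, at each stage, that the object whose coresolution dimension we must bound appears as a \emph{sub-object} of an $\E$-triangle \emph{whose middle term lies in $\X$}.

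\textbf{Base case $m=0$.} Here $A\in\X$, and I would run an inner induction on $n=\mathrm{coresdim}_{\X}(C)$. If $n=0$ then $A,C\in\X$, so $B\in\X$ by extension-closedness. If $n\ge 1$, take $C\xrightarrow{u}X^{C}\to Y^{C}$ from Proposition \ref{thm2}, with $X^{C}\in\X$ and $Y^{C}\in\w{\W_{n-1}}$. The extension $\delta\in\E(C,A)$ realizing $A\to B\to C$ should be lifted along $u$: in the long exact sequence of Lemma \ref{lem4} for this triangle the obstruction lives in $\E^{2}(Y^{C},A)$, and a dimension shift along the $\W$-coresolution of $Y^{C}$ (using $\W\subseteq{}^{\perp}\X$, i.e. $\E^{\ge 1}(\W,\X)=0$) shows $\E^{i}(Y^{C},A)=0$ for all $i\ge 1$ \emph{because $A\in\X$}. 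Hence $\delta=u^{*}\delta_{1}$ for some $\delta_{1}\in\E(X^{C},A)$; realizing $\delta_{1}$ gives an $\E$-triangle $A\to E\to X^{C}$ with $E\in\X$ (extension-closedness again), and $u^{*}\delta_{1}=\delta$ yields, via \cite[Proposition 3.15]{NP}, a commutative diagram whose middle column is an $\E$-triangle $B\to E\to Y^{C}$. Since $E\in\X$ and $Y^{C}\in\w{\X_{n-1}}$, splicing gives $B\in\w{\X_{n}}$.

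\textbf{Inductive step $m\ge 1$.} I would first drive the left term into $\X$. Applying Proposition \ref{thm2} to $A$ gives $A\xrightarrow{\psi^{A}}X^{A}\to Y^{A}$ with $X^{A}\in\X$ and $Y^{A}\in\w{\X_{m-1}}$; pushing the triangle $A\to B\to C$ out along $\psi^{A}$ (exactly as in the proof of Lemma \ref{lem6}, via (ET4) and \cite[Proposition 3.15]{NP}) produces $\E$-triangles $X^{A}\to P\to C$ and $B\to P\to Y^{A}$. The base case, applied to the first of these, gives $P\in\w{\X}$. Now pick a coresolution step $P\to P_{0}\to P'$ with $P_{0}\in\X$ and $P'\in\w{\X}$, and apply (ET4) to the composable inflations $B\to P\to P_{0}$; this produces $\E$-triangles $B\to P_{0}\to E$ and $Y^{A}\to E\to P'$. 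The induction hypothesis, applied to the last triangle (its left term $Y^{A}$ has coresolution dimension $\le m-1$), gives $E\in\w{\X}$; then $B\to P_{0}\to E$ with $P_{0}\in\X$ and $E\in\w{\X}$ splices to $B\in\w{\X}$.

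\textbf{Where the difficulty lies.} The main obstacle is structural rather than computational. Because $\X$-coresolutions grow to the right, every natural construction presents $B$ as the sub-object of an $\E$-triangle, a position in which extension-closedness of $\X$ gives no information whatsoever; the sole purpose of the octahedron (ET4) in the inductive step is to trade the triangle $B\to P\to Y^{A}$, whose middle term $P$ is merely in $\w{\X}$, for a triangle $B\to P_{0}\to E$ whose middle term $P_{0}$ is genuinely in $\X$, i.e. an honest first step of an $\X$-coresolution of $B$. The second delicate point is the extension-lifting in the base case: it succeeds precisely because $A\in\X$ forces $\E^{2}(Y^{C},A)=0$, which is exactly why the induction must first push the left term into $\X$ before attempting any lift. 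This also explains why a single naive horseshoe argument does not work: that would require the vanishing of $\E^{1}(C,X^{A})$, which is not available here.
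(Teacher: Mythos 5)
Your proof is correct, and its skeleton coincides with the paper's: both induct on $\mathrm{coresdim}_{\X}(A)$, and your base case is the paper's argument verbatim — lift $\delta\in\E(C,A)$ along the left $\X$-approximation $u\colon C\to X^{C}$ (the obstruction $\E^{2}(Y^{C},A)$ vanishes because $\w{\W}\subseteq{}^{\perp}\X$ and $A\in\X$), realize the lifted extension to get $A\to E\to X^{C}$ with $E\in\X$ by extension-closedness, then one octahedron and splicing. The genuine divergence is in the inductive step. The paper never quotes its base case there; instead, after the pushout giving $X^{0}\to V'\to C$ and $B\to V'\to L$, it \emph{re-runs} the lifting argument on $X^{0}\to V'\to C$ along the same approximation $u$ (using that $\E(u,X^{0})$ is an isomorphism), producing an explicit $V\in\X$ and the triangle $V'\to V\to Y^{C}$, and only then applies (ET4) to $B\to V'\to V$ and the induction hypothesis to $L\to U\to Y^{C}$. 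You instead treat the base case as a black box: apply it to $X^{A}\to P\to C$ to conclude $P\in\w{\X}$, extract one coresolution step $P\to P_{0}\to P'$, and use a single (ET4) on the composable inflations $B\to P\to P_{0}$, feeding $Y^{A}\to E\to P'$ to the induction hypothesis. Your packaging is more modular and saves the second approximation argument (one octahedron instead of two, at this spot); the paper's is more explicit, constructing the $\X$-middle-term directly without ever invoking the base case inside the induction. Two citation corrections: the pushout producing $X^{A}\to P\to C$ and $B\to P\to Y^{A}$ is the \emph{dual} of \cite[Proposition 3.15]{NP}, which is what the paper invokes at the corresponding point (Lemma \ref{lem6} is the pullback version, so ``exactly as in Lemma \ref{lem6}'' should read ``dually to Lemma \ref{lem6}''); and the octahedron completing your base case is an application of (ET4)$^{\rm op}$, as in the paper, rather than of \cite[Proposition 3.15]{NP} itself.
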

\begin{proof} Suppose that $\xymatrix{A\ar[r]^{d}&B\ar[r]^{e}&C\ar@{-->}[r]^{\delta}&}$ is an $\E$-triangle with $A$ and $C$ in $\w{\X}$. We proceed by induction on $n:=$~coresdim$_{\X}(A)$.
 Assume $n=0$, which means that $A$ is in $\X$. As $C$ is in $\w{\X}$, there exists an $\E$-triangle
$\xymatrix@C=0.6cm{C\ar[r]^{p}&X^{C}\ar[r]^{}&Y^{C}\ar@{-->}[r]^{}&}$ with $X^{C}\in \X$ and $Y^{C}\in\w{\W}$ by Proposition \ref{thm2}.
 Since $\W\subseteq{^{\bot}\X}$, it is easy to see that $\w{\W}\subseteq {^{\perp}\X}$. It follows that $\E(p, A): \E(X^{C}, A)\rightarrow \E(C, A)$ is an isomorphism. Hence we obtain a commutative diagram
$$\xymatrix@C=2em{A\ar[r]^d\ar@{=}[d]&B\ar[r]^e\ar[d]&C\ar@{-->}[r]^{\delta}\ar[d]^{p}&\\
  A\ar[r]&Z\ar[r]&X^{C}\ar@{-->}[r]^{\theta}&
  }$$ with $p^{*}\theta=\delta$.
   By $(ET4)^{op}$, we have the commutative diagram
$$\xymatrix{A\ar[r]^{}\ar@{=}[d]&B\ar[r]^{}\ar[d]_{}&C\ar[d]^{p}&\\
A\ar[r]^{}&Z\ar[r]^{}\ar[d]_{}&X^{C}\ar[d]^{}&\\
&Y^{C}\ar@{=}[r]&Y^{C}.}$$
Since $X^{C}$ and $A$ are in $\X$, the object $Z\in \X$ as $\X$ is closed under extensions.
 Note that $Y^{C}\in \w{\W}$, hence in $\w{\X}$, it follows that $B\in \w{\X}$.

Assume that $n>0$ and $\xymatrix@C=0.6cm{A\ar[r]^{}&X^{0}\ar[r]^{}&L\ar@{-->}[r]^{}&}$ is an $\E$-triangle with $X^{0}\in{\X}$ and coresdim$_{\X}(L)=n-1$. By the dual of {\cite[Proposition 3.15]{NP}}, we have a commutative diagram
$$\xymatrix{
     A\ar[d]_{} \ar[r]^{} & B \ar[d]^{}\ar[r]^{}&C\ar@{=}[d] \\
  X^{0} \ar[d]_{} \ar[r]^{} & V' \ar[d]^{} \ar[r]^{} & C \\
  L \ar@{=}[r] & L.}
  $$
Since $X_{0}\in\X$, it follows that $\E(p,X^{0}): \E(X^{C}, X^{0})\rightarrow \E(C, X^{0})$ is an isomorphism.
 Hence we have a commutative diagram
 $$\xymatrix@C=2em{X^{0}\ar[r]^{}\ar@{=}[d]&V'\ar[r]^{}\ar[d]&C\ar@{-->}[r]^{p^{*}\theta_{1}}\ar[d]^{p}&\\
  X^{0}\ar[r]&V\ar[r]&X^{C}\ar@{-->}[r]^{\theta_{1}}&
  .}$$
 Using $(ET4)^{op}$, we obtain a commutative diagram
  $$\xymatrix{X^{0}\ar[r]^{}\ar@{=}[d]&V'\ar[r]^{}\ar[d]_{}&C\ar[d]^{p}&\\
X^{0}\ar[r]^{}&V\ar[r]^{}\ar[d]_{}&X^{C}\ar[d]^{}&\\
&Y^{C}\ar@{=}[r]&Y^{C}.&}$$
By $(ET4)$, we also have the following commutative diagram
 $$\xymatrix{B\ar[r]^{}\ar@{=}[d]&V'\ar[r]^{}\ar[d]_{}&L\ar[d]^{}&\\
B\ar[r]^{}&V\ar[r]^{}\ar[d]_{}&U\ar[d]^{}&\\
&Y^{C}\ar@{=}[r]&Y^{C}.}$$
 By the induction hypothesis, $U$ is in $\w{\X}$. Since $X^{C}$ and $X^{0}$ are in $\X$, $V$ belongs to $\X$, as $\X$ is closed under extensions. It follows that $B\in \w{\X}$.
 \end{proof}

\begin{lem}\label{lem9}
Suppose that $\X$ is closed under extensions and cones of inflations, and $\W$ is an $\X$-projective generator for $\X$. Given an $\E$-triangle
$\xymatrix{K\ar[r]^{x}&X\ar[r]^{y}&C\ar@{-->}[r]^{\delta}&}$ with $X\in\X$. Then $C\in \w{\X}$ if and only if $K\in \w{\X}$.
\end{lem}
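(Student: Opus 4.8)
The plan is to treat the two implications separately, because the extriangulated setting lacks the rotation available in triangulated categories, so the ``cocone'' direction $C\in\w{\X}\Rightarrow K\in\w{\X}$ and the ``cone'' direction $K\in\w{\X}\Rightarrow C\in\w{\X}$ are not formally interchangeable.

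For the implication $C\in\w{\X}\Rightarrow K\in\w{\X}$, I would argue directly from the definition of an $\E$-triangle sequence. Suppose $C\in\w{\X_{n}}$ and fix an $\E$-triangle sequence $C\to X_{0}\to\cdots\to X_{-n}$ with all $X_{-i}\in\X$; its first $\E$-triangle has the form $C\to X_{0}\to C_{-1}$ with $C\to X_{0}$ an inflation. Prepending the given $\E$-triangle $K\to X\to C$ produces the sequence $K\to X\to X_{0}\to\cdots\to X_{-n}$, and the new differential $X\to X_{0}$ factors as the deflation $y\colon X\to C$ followed by the inflation $C\to X_{0}$. This is exactly the factorization $d=g f$ demanded of a differential in an $\E$-triangle sequence, so the spliced sequence is again an $\E$-triangle sequence, now with all middle terms in $\X$. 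Hence $K\in\w{\X_{n+1}}$, and in fact $\mathrm{coresdim}_{\X}(K)\le\mathrm{coresdim}_{\X}(C)+1$. Notably this direction uses neither Proposition \ref{thm2} nor the hypothesis that $\X$ is closed under cones of inflations.

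The reverse implication $K\in\w{\X}\Rightarrow C\in\w{\X}$ is the main obstacle, since prepending no longer works: the given triangle exhibits $C$ as a cone, not as the source of a coresolution. Here I would combine Proposition \ref{thm2}, a pushout, and (ET4). First, since $K\in\w{\X}$, Proposition \ref{thm2} provides an $\E$-triangle $K\to X^{K}\to Y^{K}$ with $X^{K}\in\X$ and $Y^{K}\in\w{\W}$. Pushing out the given $\E$-triangle $K\to X\to C$ along $K\to X^{K}$ (via \cite[Proposition 3.15]{NP}) yields an $\E$-triangle $X^{K}\to B\to C$ together with an $\E$-triangle $X\to B\to Y^{K}$, the latter arising since the square is bicartesian, so that the cone of $X\to B$ is identified with the cone $Y^{K}$ of $K\to X^{K}$. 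Since $X\in\X\subseteq\w{\X}$ and $Y^{K}\in\w{\W}\subseteq\w{\X}$, Corollary \ref{prop2} gives $B\in\w{\X}$, so a second application of Proposition \ref{thm2} produces an $\E$-triangle $B\to X^{B}\to Y^{B}$ with $X^{B}\in\X$ and $Y^{B}\in\w{\W}$.

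The final step is to feed the composable inflations $X^{K}\to B\to X^{B}$ (the first from the pushout triangle, the second from the last coresolution) into (ET4). This yields an object $E$ fitting into an $\E$-triangle $X^{K}\to X^{B}\to E$ and, simultaneously, an $\E$-triangle $C\to E\to Y^{B}$. Because $X^{K},X^{B}\in\X$ and $\X$ is closed under cones of inflations, the first triangle forces $E\in\X$; this is the single point where that hypothesis is indispensable. Then the triangle $C\to E\to Y^{B}$ with $E\in\X$ and $Y^{B}\in\w{\W}$ is exactly condition (2) of Proposition \ref{thm2}, whence $C\in\w{\X}$. The delicate part of the whole argument is precisely this orchestration: converting the finiteness of a coresolution of $K$ into a single $\E$-triangle that displays $C$ as an $\X$-object modulo something in $\w{\W}$, which is what allows Proposition \ref{thm2}(2) to close the loop.
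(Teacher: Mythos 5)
Your proof is correct, and in the hard direction ($K\in\w{\X}\Rightarrow C\in\w{\X}$) it takes a genuinely different---and more economical---route than the paper's. You push the given $\E$-triangle out along the approximation inflation $K\to X^{K}$ (strictly this is the \emph{dual} of \cite[Proposition 3.15]{NP}, the version for two $\E$-triangles with a common first term, which is how the paper itself cites it elsewhere), deduce $B\in\w{\X}$ from extension-closedness (Corollary \ref{prop2}), take a second approximation triangle $B\to X^{B}\to Y^{B}$, and apply (ET4) to the composable inflations $X^{K}\to B\to X^{B}$ to get $E\in\X$ (closure under cones of inflations) together with the $\E$-triangle $C\to E\to Y^{B}$, which finishes via Proposition \ref{thm2}. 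The paper goes another way: it maps the approximation triangle $K\to X^{K}\to Y^{K}$ \emph{into} the given triangle (using that $K\to X^{K}$ is a left $\X$-approximation), pushes a coresolution step $Y^{K}\to W\to L$ out along $Y^{K}\to C$ via \cite[Proposition 1.20]{LN} to get the $\E$-triangle $Y^{K}\to C\oplus W\to V$, uses (ET4)$^{\mathrm{op}}$ and an identification of the resulting middle object with $X^{K}$ to conclude $V\in\X$, hence $C\oplus W\in\w{\X}$, and then---since closure of $\w{\X}$ under direct summands (Proposition \ref{prop4}) is proved only later, using this very lemma---needs one further (ET4) diagram to pass from $C\oplus W$ to $C$. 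Your argument avoids \cite[Proposition 1.20]{LN}, the mapping-cone-style triangle, and the whole $C\oplus W$ detour, so there is no summand-closure circularity to dodge, and nothing of substance is lost in exchange. Two cosmetic points: cite the dual of \cite[Proposition 3.15]{NP} rather than the proposition itself, and drop the ``bicartesian'' heuristic, since that dual statement already asserts outright that both $X^{K}\to B\to C$ and $X\to B\to Y^{K}$ are $\E$-triangles. Your easy direction (splicing, with the bound $\mathrm{coresdim}_{\X}(K)\leq\mathrm{coresdim}_{\X}(C)+1$) is exactly the paper's ``by definition'' remark, spelled out.
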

\begin{proof} By definition, if $C\in \w{\X}$, then so is $K$. Now assume that $K\in \w{\X}$. By Proposition \ref{thm2}, we have an $\E$-triangle
$\xymatrix{K\ar[r]^{p}&X^{K}\ar[r]^{}&Y^{K}\ar@{-->}[r]^{\theta}&}$ with $X^{K}\in \X$ and $Y^{K}\in \w{\W}$. Note that $p:K\rightarrow X^{K}$ is a left $\X$-approximation of $K$.
Hence we obtain a commutative diagram
$$\xymatrix@C=2em{K\ar[r]^p\ar@{=}[d]&X^{K}\ar[r]\ar[d]^{f}&Y^{K}\ar@{-->}[r]^{\theta}\ar[d]^{g}&\\
 K\ar[r]&X\ar[r]&C\ar@{-->}[r]^{\delta}&
  }$$ with $g^*\delta=\theta$. Since  $Y^{K}\in \w{\W}$, there exists an $\E$-triangle
$\xymatrix{Y^{K}\ar[r]^{m}&W\ar[r]^{e}&L\ar@{-->}[r]^{\delta'}&}$ with $W\in\W$ and $L\in \w{\W}$.
 By \cite[Proposition 1.20]{LN}, we also have a commutative diagram
$$\xymatrix@C=2em{Y^{K}\ar[r]^m\ar[d]^g&W\ar[r]^e\ar[d]^h&L\ar@{-->}[r]^{\delta'}\ar@{=}[d]&\\
  C\ar[r]^u&V\ar[r]^v&L\ar@{-->}[r]^{g_*\delta'}&
  }$$ such that $\xymatrix@C=1,2cm{Y^{K}\ar[r]^{\tiny\begin{bmatrix}g\\m\end{bmatrix}\ \ \ }&C\oplus W\ar[r]^{\tiny\ \ \begin{bmatrix}-u&h\end{bmatrix}}&V\ar@{-->}[r]^{v^{*}\delta'}&}$ is an $\E$-triangle.
   By $(ET4)^{op}$, we ${\mathrm{obtain}}$ a commutative diagram
$$\xymatrix@C=4em{
  K\ar[r]^{} \ar@{=}[d] &H \ar[r]^{}\ar[d]& Y^{K} \ar[d]_{\tiny\begin{bmatrix}g\\m\end{bmatrix}} \ar@{-->}[r]^{g^{*}\delta} &  \\
  K\ar[r]^{\tiny\begin{bmatrix}x\\0\end{bmatrix}} & X\oplus W \ar[r]^{\tiny\begin{bmatrix}y&0\\0&1\end{bmatrix}} \ar[d]^{} \ar[d]^{} & C\oplus W\ar@{-->}[r]^{\tiny\begin{bmatrix}1&0\end{bmatrix}^{*}\delta} \ar[d]_{\tiny\begin{bmatrix}-u&h\end{bmatrix}} &\\
  & V \ar@{-->}[d]^{} \ar@{=}[r] &V\ar@{-->}[d]^{v^*\delta'}&\\
  &&&}\eqno{\raisebox{-8.5ex}{$(\ddagger)$}}
$$
where the second row can be obtained by the  proof similar to that of \cite[Lemma 3.7(2)]{HZZ}. Note that $g^{*}\delta=\theta$. Then we have the following commutative diagram
$$\xymatrix@C=2em{K\ar[r]^{}\ar@{=}[d]&H\ar[r]^{}\ar[d]^{}&Y^{K}\ar@{-->}[r]^{g^{*}\delta}\ar@{=}[d]&\\
  K\ar[r]^{p}&X^{K}\ar[r]^{}&Y^{K}\ar@{-->}[r]^{\theta}&
  .}$$ It follows that $H$ is isomorphic to $X^{K}$. Hence we can replace $H$ by $X^{K}$ in $(\ddagger)$.
   Since $W$ and $X$ are in $\X$, it follows that $V$ is in $\X$ as $\X$ is closed under cones of inflations. By Corollary \ref{prop2}, we have $C\oplus W\in \w{\X}.$ So there exists an $\E$-triangle
$\xymatrix{C\oplus W\ar[r]&X^{C\oplus W}\ar[r]^{}&Y^{C\oplus W}\ar@{-->}[r]^{}&}$ with $X^{C\oplus W}\in\X$ and $Y^{C\oplus W}\in\w{\W}$ by Proposition \ref{thm2}. By $(ET4)$, there is a commutative diagram

$$\xymatrix@C=2em{C\ar[r]^{}\ar@{=}[d]&C\oplus W\ar[r]^{}\ar[d]_{}&W\ar[d]^{}&\\
C\ar[r]^{}&X^{C\oplus W}\ar[r]^{}\ar[d]_{}&Z\ar[d]^{}&\\
&Y^{C\oplus W}\ar@{=}[r]&Y^{C\oplus W}&}$$
which implies that $Z\in \w{\X}$ as $Y^{C\oplus W}$ and $W$ are in $\w{\X}$. So $C\in \w{\X}$, as desired.
\end{proof}

Now we give characterizations of $\X$-coresolution dimensions of objects in $\w{\X}$.
\begin{prop}\label{thm3}
Suppose that $\X$ is closed under extensions and cones of inflations, and $\W$ is an $\X$-projective generator for $\X$. The following statements are equivalent for any $C\in \w{\X}$ and non-negative integer $n$.
\begin{enumerate}
\item {\rm coresdim}$_{\X}(C)\leq n$.

\item If $C\rightarrow X_{0}\rightarrow\cdots\rightarrow X_{-n+1}\rightarrow U$ is an $\E$-triangle sequence with $X_{-i}\in \X$ for $0\leq i\leq n-1$, then $U\in \X$.

\item $\E^{n+i}(Y, C)=0$ for any object $Y\in \w{\W}$ and all $i\geq1$.

\item $\E^{n+i}(W, C)=0$ for any object $W\in \W$ and all $i\geq1$.
\end{enumerate}
\end{prop}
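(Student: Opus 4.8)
The plan is to prove the cycle $(1)\Rightarrow(4)\Rightarrow(2)\Rightarrow(1)$ together with $(3)\Leftrightarrow(4)$, using the two long exact sequences of Lemma \ref{lem4} as the engine for dimension shifting and saving the passage from an extension-vanishing statement back to membership in $\X$ for the very end. Throughout I will exploit that $\W$ is $\X$-projective, i.e. $\W\subseteq{^{\perp}\X}$, so that $\E^{j}(W,X)=0$ for every $W\in\W$, $X\in\X$ and $j\geq 1$; this is precisely what makes all higher extension groups against objects of $\X$ collapse. The trivial case $n=0$ (where $C\in\X$) will be separated off so that all shifted indices stay in the range $\geq 1$ where Lemma \ref{lem4} produces genuine isomorphisms.

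For $(1)\Rightarrow(4)$ I would fix a coresolution $C\to X_{0}\to\cdots\to X_{-n}$ witnessing $\mathrm{coresdim}_{\X}(C)\le n$, decompose it into the $\E$-triangles $L_{j}\to X_{-j}\to L_{j+1}$ (with $L_{0}=C$ and $L_{n}=X_{-n}\in\X$), and feed each into the first long exact sequence of Lemma \ref{lem4}. Since $W\in{^{\perp}\X}$ kills $\E^{k}(W,X_{-j})$ for all $k\ge 1$, the connecting maps give isomorphisms $\E^{k}(W,L_{j+1})\cong\E^{k+1}(W,L_{j})$ for $k\ge 1$; composing them yields $\E^{n+i}(W,C)\cong\E^{i}(W,L_{n})=0$ for all $i\ge 1$, which is $(4)$. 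The implication $(4)\Rightarrow(3)$ is the dual shift in the first variable: for $Y\in\w{\W}$ I would induct on $\mathrm{coresdim}_{\W}(Y)$, feeding an $\E$-triangle $Y\to W_{0}\to Y_{1}$ (with $W_{0}\in\W$, $Y_{1}\in\w{\W}$) into the second sequence of Lemma \ref{lem4} with $X=C$; the outer terms $\E^{n+i}(W_{0},C)$ and $\E^{n+i+1}(Y_{1},C)$ vanish by hypothesis and induction respectively, forcing $\E^{n+i}(Y,C)=0$. Finally $(3)\Rightarrow(4)$ is immediate from $\W\subseteq\w{\W}$.

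The substantive content is in $(2)\Rightarrow(1)$ and $(4)\Rightarrow(2)$. For $(2)\Rightarrow(1)$, I note that $C\in\w{\X}$ admits a finite $\X$-coresolution; truncating it after $n$ steps gives a sequence $C\to X_{0}\to\cdots\to X_{-n+1}\to U$ with all cones again in $\w{\X}$ by construction, so hypothesis $(2)$ forces $U\in\X$ and hence $\mathrm{coresdim}_{\X}(C)\le n$ (if the coresolution had length $<n$ then $(1)$ already holds). For $(4)\Rightarrow(2)$, let $C\to X_{0}\to\cdots\to X_{-n+1}\to U$ be \emph{any} such sequence; iterating Lemma \ref{lem9} along its $\E$-triangles shows $U\in\w{\X}$, and it remains to prove $U\in\X$. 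Shifting along these $\E$-triangles exactly as in $(1)\Rightarrow(4)$ gives $\E^{i}(W,U)\cong\E^{n+i}(W,C)=0$ for all $W\in\W$ and $i\ge 1$; applying the first-variable shift of $(4)\Rightarrow(3)$, now with zero exponent offset, upgrades this to $\E^{i}(Y,U)=0$ for all $Y\in\w{\W}$ and $i\ge 1$.

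The main obstacle is the final move: converting this vanishing into the statement $U\in\X$. Here I would apply Proposition \ref{thm2} to $U\in\w{\X}$ to obtain an $\E$-triangle $\xymatrix@C=0.6cm{U\ar[r]&X^{U}\ar[r]&Y^{U}\ar@{-->}[r]^{\theta}&}$ with $X^{U}\in\X$ and $Y^{U}\in\w{\W}$. The realized extension satisfies $\theta\in\E(Y^{U},U)=\E^{1}(Y^{U},U)$, which we have just shown to be $0$; hence $\theta=0$, the $\E$-triangle splits, and $X^{U}\cong U\oplus Y^{U}$. As $\X$ is closed under direct summands, $U\in\X$, completing $(4)\Rightarrow(2)$ and thus the whole equivalence. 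The delicate points to verify are that both directions of dimension shifting stay within the index range where Lemma \ref{lem4} yields isomorphisms, and that the right-hand term $Y^{U}$ of the approximation triangle genuinely lies in $\w{\W}$, so that the established first-variable vanishing can be invoked to kill $\theta$.
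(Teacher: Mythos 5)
Your proof is correct and takes essentially the same approach as the paper: the substantive implication $(4)\Rightarrow(2)$ is handled exactly as in the paper's proof, via Lemma \ref{lem9} to get $U\in\w{\X}$, dimension shifting to obtain $\E(\w{\W},U)=0$, and the splitting of the approximation $\E$-triangle from Proposition \ref{thm2}. The only difference is cosmetic: you run the easy implications as $(1)\Rightarrow(4)\Rightarrow(3)$ by shifting in each variable separately, whereas the paper proves $(1)\Rightarrow(3)$ in one step using the $\W$-tailed coresolution of Lemma \ref{lem7}; both are routine dimension-shifting arguments resting on Lemma \ref{lem4}.
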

\begin{proof} The proof is modeled on that of  \cite[Lemma 4.2]{MSSS1} and \cite[Lemma 4.7]{MSSS2}.

 $(3)\Rightarrow(4)$ and $(2)\Rightarrow(1)$ are trivial.

$(1)\Rightarrow(3).$ By Lemma \ref{lem7}, we have an $\E$-triangle sequence
$C\rightarrow X_{0}\rightarrow W_{-1}\rightarrow \cdots \rightarrow W_{-n+1}\rightarrow W_{-n}$ with $X_{0}\in \X$ and $W_{-i}\in\W$ for $1\leq i\leq n$.
Since $\W\subseteq {^{\perp}\X}$, it is easy to see that $Y\in{^{\perp}\X}$ for any $Y\in \w{\W}$.
Thus we have $\E^{n+i}(Y,C)\cong \E^{i}(Y,W_{-n})=0$ for all $i\geq 1$.

$(4)\Rightarrow(2).$ If
$C\rightarrow X_{0}\rightarrow\cdots\rightarrow X_{-n+1}\rightarrow U$ is an $\E$-triangle sequence with $X_{-i}\in \X$ for $0\leq i\leq n-1$. We have $\E^{i}(\W,U)\cong\E^{n+i}(\W, C)=0$ for all $i\geq 1$. Note that $U\in \w{\X}$ by Lemma \ref{lem9}. Hence there exists an $\E$-triangle
$\xymatrix@C=0.6cm{U\ar[r]^{}&X^{U}\ar[r]^{}&Y^{U}\ar@{-->}[r]^{}&}$ with $X^{U}\in \X$ and $Y^{U}\in \w{\W}$ by Proposition \ref{thm2}.
Since $\E^{i}(\W, U)=0$ for all $i\geq1$, it is easy to see that $\E(\w{\W},U)=0$.
Thus the above $\E$-triangle splits. So $U\in \X$ and $(2)$ follows.
\end{proof}

\section{\bf Proof of Theorem \ref{thm5}}\label{proof-of-main-result}

In this section, we first give a method to construct cotorsion pairs on the extriangulated category $\w{\X}$ for some extension closed subcategory $\X$ of an extriangulated category $\C$, and then establish relations between silting subcategories in the stable category of a Frobenius extriangulated category $\C$ and certain covariantly finite subcategories in $\C$ linked by the cotorsion pairs constructed above.
\begin{df} \emph{\cite[Definition 4.1]{NP}}\label{df:cotorsion pair}
Let $\mathcal{U}$, $\mathcal{V}$ $\subseteq$ $\mathcal{C}$ be a pair of full additive subcategories closed
under isomorphisms and direct summands. The pair ($\mathcal{U}$, $\mathcal{V}$) is called a {\it cotorsion
pair} on $\mathcal{C}$ if it satisfies the following conditions:
\begin{enumerate}
\item $\mathbb{E}(\mathcal{U}, \mathcal{V})=0$;

\item For any $C \in{\mathcal{C}}$, there exists a conflation $V^{C}\rightarrow U^{C}\rightarrow C$ satisfying
$U^{C}\in{\mathcal{U}}$ and $V^{C}\in{\mathcal{V}}$;

\item For any $C \in{\mathcal{C}}$, there exists a conflation $C\rightarrow V_{C} \rightarrow U_{C}$ satisfying
$U_{C}\in{\mathcal{U}}$ and $V_{C}\in{\mathcal{V}}$.
\end{enumerate}

In this case, put $\mathcal{I}=\mathcal{U}\bigcap\mathcal{V}$, and call it the \emph{core} of ($\mathcal{U}$, $\mathcal{V}$).
\end{df}

The following proposition is an extriangulated analog of the dual of \cite[Prposition 5.2]{MSSS1} and \cite[Theorem 3.5(b)]{MSSS2}.

\begin{prop}\label{prop1}
Let $\W\subseteq\X$ be two subcategories of $\C$ such that $\W$ is $\X$-projecive. Then the following statements hold.
\begin{enumerate}
\item $\w{\W}$ is $\X$-projective.

\item If $\W$ is a generator for $\X$, then $\W=\X\bigcap{^{\perp}\X}=\X\bigcap\w{\W}$.

\item If $\W$ is a generator for $\X$, then $\w{\W}=\w{\X}\bigcap{^{\perp}\X}$.
\end{enumerate}
\end{prop}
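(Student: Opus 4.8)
The plan is to establish the three statements in order, using (1) to prove (2), and then both (1) and (2) to prove (3); throughout I rely only on the long exact sequences of Lemma \ref{lem4} and, for (3), on the $\X$-approximation $\E$-triangle produced by Proposition \ref{thm2}.

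For (1), I would show $\w{\W}\subseteq{^{\perp}\X}$ by induction on the $\W$-coresolution length of an object $C\in\w{\W}$. If $C\in\W$ this is exactly the hypothesis that $\W$ is $\X$-projective. In general the first $\E$-triangle of a $\W$-coresolution has the form $\xymatrix@C=0.6cm{C\ar[r]&W_{0}\ar[r]&K\ar@{-->}[r]&}$ with $W_{0}\in\W$ and $K\in\w{\W_{n-1}}$. Applying the contravariant long exact sequence of Lemma \ref{lem4} with respect to any $X\in\X$ yields the segment $\E^{i}(W_{0},X)\to\E^{i}(C,X)\to\E^{i+1}(K,X)$ for $i\ge1$; the outer terms vanish by the $\X$-projectivity of $\W$ and the induction hypothesis, forcing $\E^{i}(C,X)=0$.

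For (2), the inclusions $\W\subseteq\X\cap{^{\perp}\X}$ and $\W\subseteq\X\cap\w{\W}$ are immediate from $\W\subseteq\X$, $\W\subseteq{^{\perp}\X}$, and $\W\subseteq\w{\W}$, while (1) gives $\X\cap\w{\W}\subseteq\X\cap{^{\perp}\X}$. Hence it suffices to prove $\X\cap{^{\perp}\X}\subseteq\W$, and here I would use that $\W$ is a generator: for $C\in\X\cap{^{\perp}\X}$ choose an $\E$-triangle $\xymatrix@C=0.6cm{C'\ar[r]&W_{0}\ar[r]&C\ar@{-->}[r]^{\delta}&}$ with $W_{0}\in\W$ and $C'\in\X$. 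Since $C\in{^{\perp}\X}$ and $C'\in\X$ we have $\delta\in\E(C,C')=0$, so the $\E$-triangle splits and $C$ is a direct summand of $W_{0}$; as $\W$ is closed under direct summands, $C\in\W$. Chaining the inclusions gives $\W=\X\cap\w{\W}=\X\cap{^{\perp}\X}$.

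For (3), the inclusion $\w{\W}\subseteq\w{\X}\cap{^{\perp}\X}$ follows from $\W\subseteq\X$ (whence $\w{\W}\subseteq\w{\X}$) together with (1). For the reverse, take $C\in\w{\X}\cap{^{\perp}\X}$ with coresdim$_{\X}(C)=n$; by Proposition \ref{thm2} there is an $\E$-triangle $\xymatrix@C=0.6cm{C\ar[r]&X^{C}\ar[r]&Y^{C}\ar@{-->}[r]&}$ with $X^{C}\in\X$ and $Y^{C}\in\w{\W_{n-1}}$. By (1) we have $Y^{C}\in{^{\perp}\X}$, and $C\in{^{\perp}\X}$ by assumption, so the contravariant long exact sequence of Lemma \ref{lem4} gives $\E^{i}(Y^{C},X)\to\E^{i}(X^{C},X)\to\E^{i}(C,X)$ with vanishing outer terms for $i\ge1$, whence $X^{C}\in\X\cap{^{\perp}\X}$. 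Then (2) yields $X^{C}\in\W$, and splicing a $\W$-coresolution of $Y^{C}$ onto this $\E$-triangle exhibits $C\in\w{\W_{n}}\subseteq\w{\W}$.

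The hard part will be the reverse inclusion in (3): one must upgrade the approximation $\E$-triangle of Proposition \ref{thm2}, whose middle term lies only in $\X$, to one whose middle term lies in $\W$. The delicate point is verifying $X^{C}\in{^{\perp}\X}$, which is precisely where the full strength of (1) (that the cosyzygy $Y^{C}$ is already $\X$-orthogonal) combines with the orthogonality of $C$ through the long exact sequence. I also note that invoking Proposition \ref{thm2} in (3) requires $\X$ to be closed under extensions, so I would carry that as a standing assumption for the third statement.
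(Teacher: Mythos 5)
Your proof is correct and follows essentially the same route as the paper's: dimension shifting along the long exact sequences of Lemma \ref{lem4} for (1), the split-triangle argument via the generator property for (2), and for (3) the approximation $\E$-triangle of Proposition \ref{thm2} combined with (1), (2) and splicing. Your closing remark is also apt: the paper states Proposition \ref{prop1} without assuming $\X$ closed under extensions, yet its own proof of (3) likewise invokes Proposition \ref{thm2} (which needs that hypothesis), so carrying extension-closure as a standing assumption for (3) is indeed required --- and it is satisfied in every application the paper makes.
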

\begin{proof}
(1). Let $C\in \w{\W}$. Then there exists an $\E$-triangle sequence
$$C\rightarrow W_{0}\rightarrow W_{-1}\rightarrow\cdots \rightarrow W_{-n+1}\rightarrow W_{-n}$$ for some $n\geq 0$ with $W_{-i}\in\W$ for $1\leq i\leq n$.
It follows that $\E^{i}(C,\X)\cong\E^{n+i}(W_{-n}, \X)=0$ for all $i\geq 1$.
Hence $\w{\W}$ is $\X$-projective.

(2). It is obvious that $\W\subseteq\X\bigcap{^{\bot}\X}$. Let $X\in \X\bigcap{^{\perp}\X}$. Then we have an $\E$-triangle
$\xymatrix@C=0.6cm{X'\ar[r]^{}&W\ar[r]^{}&X\ar@{-->}[r]^{}&}$ with $X'\in \X$ and $W\in\W$ as $\W$ is a generator for $\X$. Moreover, $X\in {^{\bot}\X}$ implies that the above $\E$-triangle splits and so $X\in \W$. Hence $\W=\X\bigcap{^{\perp}\X}$.
 On the other hand, it is easy to see that $\W\subseteq\X\bigcap\w{\W}$. Since $\w{\W}$ is $\X$-projective by (1), it follows that $\X\bigcap\w{\W}\subseteq \X\bigcap{^{\perp}\X}$. Hence $\W=\X\bigcap\w{\W}.$

(3). Let $C\in \w{\X}\bigcap{^{\perp}\X}$. Then we have an $\E$-triangle
$\xymatrix@C=0.6cm{C\ar[r]^{}&X^{C}\ar[r]^{}&Y^{C}\ar@{-->}[r]^{}&}$ with $Y^{C}\in \w{\W}$ and $X^{C}\in\X$ by Proposition \ref{thm2}. Since $Y^{C}$ and $C$ are in ${^{\perp}\X}$, it follows that $X^{C}\in \X\bigcap{^{\perp}\X}$. Hence $X^{C}\in \W$ by $(2)$, implying $\w{\X}\bigcap{^{\bot}\X}\subseteq\w{\W}$.
 On the other hand, it is obvious that $\w{\W}\subseteq\w{\X}\bigcap{^{\perp}\X}.$ So $\w{\W}=\w{\X}\bigcap{^{\perp}\X}$.
\end{proof}

\begin{prop}\label{prop4}
Let $\X$ be closed under extensions such that $\W$ is an $\X$-projective generator for $\X$. If $\X$ is closed under cones of inflations, then $\w{\X}$ is closed under direct summands.
\end{prop}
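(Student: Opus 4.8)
The plan is to reduce everything to the statement that a direct summand $A$ of any $C = A\oplus B\in\w{\X}$ again lies in $\w{\X}$; the claim then follows by symmetry in $A$ and $B$. I would argue by induction on $n:=\mathrm{coresdim}_{\X}(C)$. For the base case $n=0$ we have $C\in\X$, and since $\X$ is by convention a subcategory closed under direct summands, $A\in\X\subseteq\w{\X}$. The point of the induction will be to replace the pair $(A,B)$ by a single auxiliary object of strictly smaller $\X$-coresolution dimension, so that the hypothesis applies.

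For the inductive step ($n\ge 1$) the crucial decision is \emph{not} to try to build an $\X$-approximation of $A$ in isolation. Instead I would apply Proposition \ref{thm2} to obtain an $\E$-triangle $C\xrightarrow{\psi}X\to Y$ with $X\in\X$ and $Y\in\w{\W_{n-1}}$, and then feed this, together with the split $\E$-triangle $B\xrightarrow{\iota_{B}}C\xrightarrow{\pi_{A}}A$, into the axiom (ET4). This produces an object $E$ and two $\E$-triangles $B\xrightarrow{\psi\iota_{B}}X\to E$ and $A\to E\to Y$. Exchanging the roles of $A$ and $B$ yields symmetrically an object $E'$ and $\E$-triangles $A\xrightarrow{\psi\iota_{A}}X\to E'$ and $B\to E'\to Y$.

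The key observation is that the direct sum of the two triangles $B\to X\to E$ and $A\to X\to E'$ is an $\E$-triangle $C\cong B\oplus A\to X\oplus X\to E\oplus E'$ whose middle term lies in $\X$. By Lemma \ref{lem9} this forces $E\oplus E'\in\w{\X}$, and I would then pin down its dimension: applying $\E^{\bullet}(W,-)$ for $W\in\W$ to this $\E$-triangle and using $\W\subseteq{}^{\perp}\X$ (so that $\E^{i}(W,X\oplus X)=0$ for $i\ge 1$), Lemma \ref{lem4} gives $\E^{j}(W,E\oplus E')\cong\E^{j+1}(W,C)$ for all $j\ge 1$. Since $\mathrm{coresdim}_{\X}(C)\le n$, Proposition \ref{thm3} makes these groups vanish in the range $j\ge n-1$, so Proposition \ref{thm3} applied again (now with bound $n-1$) gives $\mathrm{coresdim}_{\X}(E\oplus E')\le n-1$. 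The induction hypothesis then applies to the decomposition $E\oplus E'$, yielding $E,E'\in\w{\X}$. Finally, from the $\E$-triangle $A\xrightarrow{\psi\iota_{A}}X\to E'$ with $X\in\X$ and $E'\in\w{\X}$, Lemma \ref{lem9} delivers $A\in\w{\X}$, closing the induction.

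The main obstacle is exactly the circularity one meets by attacking $A$ directly: the approximation triangle of $A\oplus B$ does not split as a block-diagonal sum of triangles for $A$ and for $B$, so one cannot simply read off a coresolution of $A$. The device that breaks this is the (ET4) construction, which repackages the two summands into the \emph{single} object $E\oplus E'$, combined with the homological characterization of coresolution dimension from Proposition \ref{thm3}. Verifying the dimension drop $\mathrm{coresdim}_{\X}(E\oplus E')\le n-1$ is the only genuinely quantitative step, and it is precisely where Proposition \ref{thm3} and Lemma \ref{lem9} (rather than any direct manipulation of approximations) do the real work.
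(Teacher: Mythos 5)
Your proposal is correct and follows essentially the same route as the paper's own proof: the paper also inducts on $n=\mathrm{coresdim}_{\X}(C_{1}\oplus C_{2})$, applies (ET4) to the split triangle together with an $\E$-triangle $C_{1}\oplus C_{2}\rightarrow X\rightarrow K$ to produce triangles $C_{i}\rightarrow X\rightarrow L_{i}$, forms the direct-sum triangle $C_{1}\oplus C_{2}\rightarrow X\oplus X\rightarrow L_{1}\oplus L_{2}$, and then uses Lemma \ref{lem9} and Proposition \ref{thm3} to get $\mathrm{coresdim}_{\X}(L_{1}\oplus L_{2})\leq n-1$ before invoking the induction hypothesis, exactly as you do with $E\oplus E'$. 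One small index correction: the isomorphism $\E^{j}(W,E\oplus E')\cong\E^{j+1}(W,C)$ yields vanishing for $j\geq n$ (not $j\geq n-1$), which is precisely the range Proposition \ref{thm3} requires for the bound $n-1$, so your conclusion stands unchanged.
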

\begin{proof}

Suppose $C_{1}\oplus C_{2}\in \w{\X}$.
  We proceed by induction on $n=$~coresdim$_{\X}(C_{1}\oplus C_{2})$. If $n=0$, then $C_{1}$ and $C_{2}$ are in $\X$.

Suppose $n>0$. There is an $\E$-triangle
$\xymatrix@C=3em{C_{1}\oplus C_{2}\ar[r]^{\tiny\begin{bmatrix}y_{1}&y_{2}\end{bmatrix}}&X\ar[r]^{}&K\ar@{-->}[r]^{\delta}&}$
with $X\in \X$ and coresdim$_{\X}(K)=n-1$. By $(ET4)$, we obtain the following commutative diagram

$$\xymatrix@C=3em{
  C_{1}\ar[r]^{\tiny\begin{bmatrix}1\\0\end{bmatrix}} \ar@{=}[d] &C_{1}\oplus C_{2} \ar[r]^{\tiny\begin{bmatrix}0&1\end{bmatrix}} \ar[d]^{\tiny\begin{bmatrix}y_{1}&y_{2}\end{bmatrix}}& C_{2} \ar[d]^{} \ar@{-->}[r]^{} &  \\
  C_{1}\ar[r]^{y_{1}} & X\ar[r]^{x_{1}} \ar[d]^{} & L_{1}\ar@{-->}[r]^{\delta_{1}} \ar[d]^{} &\\
  & K \ar@{-->}[d]^{\delta} \ar@{=}[r] &K\ar@{-->}[d]^{}&~ \\
  &&~~~&~}$$
  Similarly, we can obtain an $\E$-triangle $\xymatrix{C_{2}\ar[r]^{y_{2}}&X\ar[r]^{x_{2}}&L_{2}\ar@{-->}[r]^{\delta_{2}}&.}$
Hence there is an $\E$-triangle $$\xymatrix@C=3.5em{C_{1}\oplus C_{2}\ar[r]^{\tiny\begin{bmatrix}y_{1}&0\\0&y_{2}\end{bmatrix}}&X\oplus X\ar[r]^{\tiny\begin{bmatrix}x_{1}&0\\0&x_{2}\end{bmatrix}}&L_{1}\oplus L_{2}\ar@{-->}[r]^{\delta_{1}\oplus\delta_{2}}&.}$$
 By Lemma \ref{lem9}, $L_{1}\oplus L_{2}\in \w{\X}$, and Proposition \ref{thm3} shows that coresdim$_{\X}(L_{1}\oplus L_{2})\leq n-1$. By the induction hypothesis, $L_{1}$ and $L_{2}$ are in $\w{\X}$.
 It follows that $C_{1}$ and $C_{2}$ are in $\w{\X}$.
\end{proof}

The following corollary which can be viewed as an extriangulated analog of \cite[Theorem 3.5]{MSSS2} gives a method to construct cotorsion pairs on the extriangulated category $\w{\X}$.
\begin{cor}\label{corollary:4.4}
Let $\X$ be closed under extensions such that $\W$ is an $\X$-projective generator for $\X$. Then $\w{\X}$ is an extriangulated category. Moreover, if $\X$ is closed under cones of inflations, then $(\w{\W},\X)$ is a cotorsion pair on the extriangulated category $\w{\X}$.
\end{cor}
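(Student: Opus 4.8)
The plan is to establish the final statement (Corollary \ref{corollary:4.4}) in two movements, corresponding to its two assertions. First I would show that $\w{\X}$ is an extriangulated category, and then, under the additional hypothesis that $\X$ is closed under cones of inflations, I would verify that $(\w{\W},\X)$ satisfies the three conditions in Definition \ref{df:cotorsion pair}.

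\textbf{Step 1: $\w{\X}$ is extriangulated.} By the remark in the Preliminaries (\cite[Remark 2.18]{NP}), any extension-closed subcategory of an extriangulated category inherits an extriangulated structure. So it suffices to show that $\w{\X}$ is closed under extensions in $\C$. But this is exactly Corollary \ref{prop2}, whose hypotheses ($\X$ closed under extensions, $\W$ an $\X$-projective generator for $\X$) are precisely those assumed here. Thus the first assertion is immediate.

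\textbf{Step 2: the cotorsion pair.} Now assume in addition that $\X$ is closed under cones of inflations, so that all the results of Section \ref{coresolution-dimension} apply. I would verify the conditions of Definition \ref{df:cotorsion pair} for the pair $(\w{\W},\X)$ regarded inside the extriangulated category $\w{\X}$. Condition (1), that $\E(\w{\W},\X)=0$, follows from Proposition \ref{prop1}(1): since $\W$ is $\X$-projective, $\w{\W}$ is $\X$-projective, i.e. $\w{\W}\subseteq{^{\perp}\X}$, and in particular $\E(\w{\W},\X)=0$. For condition (3), given any $C\in\w{\X}$, Proposition \ref{thm2} (specifically the implication $(1)\Rightarrow(2)$) furnishes an $\E$-triangle $\xymatrix{C\ar[r]&X^{C}\ar[r]&Y^{C}\ar@{-->}[r]&}$ with $X^{C}\in\X$ and $Y^{C}\in\w{\W}$; writing this as $C\to X^C\to Y^C$ gives exactly the conflation required in Definition \ref{df:cotorsion pair}(3) with the roles $\mathcal{U}=\X$ and $\mathcal{V}=\w{\W}$ (noting that $X^C\in\X\subseteq\w{\X}$ and $Y^C\in\w{\W}\subseteq\w{\X}$, so both terms lie in the ambient category $\w{\X}$). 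Dually, condition (2) is supplied by the implication $(1)\Rightarrow(3)$ of Proposition \ref{thm2}, which gives an $\E$-triangle $\xymatrix{X_{C}\ar[r]&Y_{C}\ar[r]&C\ar@{-->}[r]&}$ with $X_{C}\in\X$ and $Y_{C}\in\w{\W}$, i.e. a conflation $Y_C\to X_C\to C$ of the required form. Finally, both $\w{\W}$ and $\X$ are closed under direct summands: for $\X$ this is part of the standing convention that subcategories are closed under summands, and for $\w{\W}$ one argues as in Proposition \ref{prop4} (applied with $\W$ in place of $\X$, which is legitimate since $\W$ is trivially an $\W$-projective generator for itself and is closed under extensions). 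This secures that $(\w{\W},\X)$ is a genuine cotorsion pair.

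\textbf{Anticipated obstacle.} The substantive work is already encapsulated in the earlier results, so the proof itself is a short assembly. The one point requiring a little care is confirming that the two $\E$-triangles produced by Proposition \ref{thm2} are conflations \emph{within} $\w{\X}$ rather than merely in $\C$: this needs all three terms of each triangle to lie in $\w{\X}$, which holds because $\X\subseteq\w{\X}$, $\w{\W}\subseteq\w{\X}$, and $C\in\w{\X}$ by hypothesis, together with the fact (from Step 1) that $\w{\X}$ is extension-closed so that its $\E$-triangles are precisely those of $\C$ with all terms in $\w{\X}$. The closure of $\w{\W}$ under direct summands, invoked above, is the only place where one genuinely re-runs an argument (that of Proposition \ref{prop4}) rather than citing a statement verbatim, and I would either spell out that it specializes cleanly or record it as a direct consequence.
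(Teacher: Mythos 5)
Your overall strategy is the same as the paper's: Step 1 is exactly the paper's first sentence (Corollary \ref{prop2} together with \cite[Remark 2.18]{NP}), and your verification of conditions (1)--(3) of Definition \ref{df:cotorsion pair} via Proposition \ref{prop1}(1) and the two approximation triangles of Proposition \ref{thm2} is precisely what the paper compresses into the phrase ``by Proposition \ref{thm2}''. (One notational slip: in Step 2 you declare the roles $\mathcal{U}=\X$, $\mathcal{V}=\w{\W}$ and at one point rewrite the $\E$-triangle $X_C\to Y_C\to C$ as ``$Y_C\to X_C\to C$''; for the pair $(\w{\W},\X)$ the correct roles are $\mathcal{U}=\w{\W}$, $\mathcal{V}=\X$. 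The triangles you actually cite are the right ones, so this is only a labelling inconsistency, not a mathematical error.)

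The genuine gap is your argument that $\w{\W}$ is closed under direct summands. You propose to rerun Proposition \ref{prop4} with $\W$ in place of $\X$, on the grounds that $\W$ is closed under extensions and is a $\W$-projective generator for itself. Both of these facts are true (any $\E$-triangle $W_1\to E\to W_2$ with $W_1,W_2\in\W$ has extension class in $\E(W_2,W_1)=0$, hence splits), but they are not enough: Proposition \ref{prop4} has a third hypothesis, closure under cones of inflations, and its proof uses this hypothesis in an essential way through Lemma \ref{lem9} and Proposition \ref{thm3}. The subcategory $\W$ is in general \emph{not} closed under cones of inflations; for instance, in a triangulated category every morphism is an inflation, and for $\W=\mathrm{add}(\mathbb{Z})\subseteq D^b(\mathbb{Z})$ the cone of $\mathbb{Z}\xrightarrow{\,2\,}\mathbb{Z}$ is $\mathbb{Z}/2\mathbb{Z}\notin\mathrm{add}(\mathbb{Z})$. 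So the substitution is not legitimate and this step fails. The paper closes the gap by a different route, which is also the natural repair of your proof: by Proposition \ref{prop1}(3) one has $\w{\W}=\w{\X}\bigcap{^{\perp}\X}$; now $\w{\X}$ is closed under direct summands by Proposition \ref{prop4} applied to $\X$ itself (legitimately, since $\X$ is assumed closed under cones of inflations), and ${^{\perp}\X}$ is obviously closed under direct summands, hence so is the intersection $\w{\W}$. With this replacement your argument is complete and coincides with the paper's proof.
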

\begin{proof}
By Corollary \ref{prop2} and \cite[Remark 2.18]{NP}, we have that $\w{\X}$ is an extriangulated category. We only need to show that $\w{\W}$ is closed under direct summands by Proposition \ref{thm2}.
Since $\w{\W}=\w{\X}\bigcap{^{\perp}\X}$ by Proposition \ref{prop1}, it follows that $\w{\W}$ is closed under direct summands by Proposition \ref{prop4}. This completes the proof.
\end{proof}
\begin{lem}\label{lem10}
For a cotorsion pair $(\mathcal{U}, \mathcal{V})$ on $\C$, the following conditions are equivalent.

(1) $\mathcal{U}$ is resolving.

(2) $\mathcal{V}$ is coresolving.

(3) $\E^{i}(\mathcal{U},\mathcal{V})=0$ for all $i\geq 1$.

In this case, the cotorsion pair  $(\mathcal{U}, \mathcal{V})$ is called hereditary.
\end{lem}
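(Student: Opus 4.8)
The plan is to establish the cycle of implications $(1)\Rightarrow(3)$, $(2)\Rightarrow(3)$, $(3)\Rightarrow(1)$ and $(3)\Rightarrow(2)$, which together give $(1)\Leftrightarrow(3)\Leftrightarrow(2)$. The base case is free: $\E^{1}(\mathcal{U},\mathcal{V})=0$ holds by the very definition of a cotorsion pair, so all the content lies in controlling the \emph{higher} extension groups $\E^{i}$ for $i\geq 2$. Before starting I would record the standard orthogonality description of a cotorsion pair with respect to $\E=\E^{1}$, namely $\mathcal{U}=\{X\in\C\mid \E(X,\mathcal{V})=0\}$ and $\mathcal{V}=\{Y\in\C\mid \E(\mathcal{U},Y)=0\}$. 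Indeed, if $\E(X,\mathcal{V})=0$, then the conflation $V^{X}\to U^{X}\to X$ supplied by Definition \ref{df:cotorsion pair}(2) realizes an extension lying in $\E(X,V^{X})=0$, hence splits, so $X$ is a direct summand of $U^{X}\in\mathcal{U}$ and therefore belongs to $\mathcal{U}$; the reverse containment is the defining inequality and the dual statement for $\mathcal{V}$ is symmetric. The same splitting trick applied to a projective $P$, which satisfies $\E(P,\C)=0$, shows $Proj(\C)\subseteq\mathcal{U}$, and dually $Inj(\C)\subseteq\mathcal{V}$.

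For $(1)\Rightarrow(3)$ I would induct on $i$, the case $i=1$ being the defining vanishing. Fix $U\in\mathcal{U}$, $V\in\mathcal{V}$ and choose an $\E$-triangle $\Omega U\to P\to U$ with $P\in Proj(\C)$. Because $\mathcal{U}$ is resolving it contains $Proj(\C)$ and is closed under cocones of deflations, so $\Omega U\in\mathcal{U}$; the dimension-shift isomorphism $\E^{i+1}(U,V)\cong\E^{i}(\Omega U,V)$ (from $\E^{i+1}(X,Y)\cong\E(\Omega^{i}X,Y)$) then gives $\E^{i+1}(U,V)=0$ by the inductive hypothesis. The implication $(2)\Rightarrow(3)$ is entirely dual: I would replace the syzygy by the cosyzygy $\Sigma V$ obtained from an $\E$-triangle $V\to I\to\Sigma V$ with $I\in Inj(\C)$, note $\Sigma V\in\mathcal{V}$ since $\mathcal{V}$ is coresolving, and use $\E^{i+1}(U,V)\cong\E^{i}(U,\Sigma V)$.

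For $(3)\Rightarrow(1)$ I would check the three defining properties of a resolving subcategory. The containment $Proj(\C)\subseteq\mathcal{U}$ comes from the preliminary observation, and closure under extensions follows at once by applying the second long exact sequence of Lemma \ref{lem4} at $i=1$ to a fixed $V\in\mathcal{V}$. For closure under cocones of deflations, take an $\E$-triangle $A\to B\to C$ with $B,C\in\mathcal{U}$ and apply that same sequence: the segment $\E^{1}(B,V)\to\E^{1}(A,V)\to\E^{2}(C,V)$ has vanishing outer terms, the left one because $B\in\mathcal{U}$ and the right one precisely because hypothesis $(3)$ supplies $\E^{2}(\mathcal{U},\mathcal{V})=0$; hence $\E(A,V)=0$ for every $V\in\mathcal{V}$, so $A\in\mathcal{U}$ by the orthogonality characterization. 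The implication $(3)\Rightarrow(2)$ is the dual argument with the first sequence of Lemma \ref{lem4}.

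The argument is essentially routine dimension shifting, so I do not expect a deep obstacle; the two points requiring care are getting the variance and indexing of the long exact sequences in Lemma \ref{lem4} exactly right, and confirming that the orthogonality description $\mathcal{U}=\{X\mid\E(X,\mathcal{V})=0\}$ really holds in the extriangulated setting, since it is this identification that converts the cohomological vanishing in $(3)$ back into membership in $\mathcal{U}$ (and $\mathcal{V}$). Everything else reduces to the syzygy/cosyzygy shifts already available from the definition of $\E^{i}$.
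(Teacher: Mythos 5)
Your proposal is correct and is essentially the paper's proof: the paper disposes of this lemma by citing the classical module-theoretic argument (G\"{o}bel--Trlifaj, Lemma 5.24), which is exactly the dimension-shifting scheme you carry out --- the split-conflation orthogonality description $\mathcal{U}=\{X\mid\E(X,\mathcal{V})=0\}$, syzygy/cosyzygy shifts for $(1)\Rightarrow(3)$ and $(2)\Rightarrow(3)$, and the long exact sequences of Lemma \ref{lem4} for $(3)\Rightarrow(1)$ and $(3)\Rightarrow(2)$. Your write-up simply makes explicit the adaptation to the extriangulated setting (via Liu--Nakaoka's higher extension groups) that the paper's citation leaves to the reader.
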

\begin{proof}
The proof is similar to that of \cite[Lemma 5.24]{GT12}.
\end{proof}

%

Recall from the introduction that a subcategory $\HH\subseteq\C$ is called \emph{specially covariantly finite} in $\C$ provided that for any $C\in \C$, there is an $\E$-triangle  $\xymatrix@C=0.6cm{C\ar[r]^{}&H\ar[r]^{}&K\ar@{-->}[r]^{}&}$ such that $H\in\HH$ and $\E(K,\HH)=0$.

\begin{lem}\label{lem11}
Let $\HH$ be a subcategory of $\C$. Suppose that $\HH$ is coresolving and specially covariantly finite. Then $\HH\bigcap{^{\bot}\HH}$ is an $\HH$-projective generator for $\HH$.
\end{lem}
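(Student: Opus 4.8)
The plan is to set $\mathcal{W} := \mathcal{H}\cap{}^{\perp}\mathcal{H}$ and verify the two clauses in the definition of an $\mathcal{H}$-projective generator for $\mathcal{H}$. Since $\mathcal{W}\subseteq{}^{\perp}\mathcal{H}$ by construction, the $\mathcal{H}$-projectivity $\mathcal{W}\subseteq{}^{\perp}\mathcal{H}$ is immediate, and $\mathcal{W}\subseteq\mathcal{H}$ is clear; so the whole content is to produce, for each $H\in\mathcal{H}$, an $\mathbb{E}$-triangle $\xymatrix@C=0.6cm{H'\ar[r]&W\ar[r]&H\ar@{-->}[r]&}$ with $W\in\mathcal{W}$ and $H'\in\mathcal{H}$.

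First I would record a strengthening of special covariant finiteness. Because $\mathcal{H}$ is coresolving, for every $H'\in\mathcal{H}$ and $i\geq 0$ the cosyzygy $\Sigma^{i}H'$ again lies in $\mathcal{H}$: indeed $\Sigma H'$ is the cone of the inflation $H'\rightarrow I$ with $I\in Inj(\mathcal{C})\subseteq\mathcal{H}$, and $\mathcal{H}$ is closed under cones of inflations, so one iterates. Hence, given any $\mathbb{E}$-triangle $\xymatrix@C=0.6cm{C\ar[r]&H_{C}\ar[r]&L\ar@{-->}[r]&}$ furnished by special covariant finiteness (so $H_{C}\in\mathcal{H}$ and $\mathbb{E}(L,\mathcal{H})=0$), the dimension-shift isomorphism $\mathbb{E}^{i+1}(L,H')\cong\mathbb{E}(L,\Sigma^{i}H')$ forces $\mathbb{E}^{i+1}(L,H')=0$ for all $i\geq 0$ and all $H'\in\mathcal{H}$; that is, $L\in{}^{\perp}\mathcal{H}$. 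Thus special $\mathcal{H}$-preenvelopes have cokernel in ${}^{\perp}\mathcal{H}$, not merely right-orthogonal in degree one.

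Now fix $H\in\mathcal{H}$. Using enough projectives, choose an $\mathbb{E}$-triangle $\xymatrix@C=0.6cm{\Omega H\ar[r]^{f}&P\ar[r]&H\ar@{-->}[r]&}$ with $P$ projective, and apply the previous paragraph to $\Omega H$ to get $\xymatrix@C=0.6cm{\Omega H\ar[r]^{\psi}&H_{0}\ar[r]&L\ar@{-->}[r]&}$ with $H_{0}\in\mathcal{H}$ and $L\in{}^{\perp}\mathcal{H}$. I would then form the homotopy pushout $W$ of the span $P\xleftarrow{f}\Omega H\xrightarrow{\psi}H_{0}$. By $(ET4)$ together with \cite[Proposition 3.15]{NP}, applied once to push $\Omega H\rightarrow P\rightarrow H$ out along $\psi$ and once to push $\Omega H\rightarrow H_{0}\rightarrow L$ out along $f$ (both producing the same homotopy pushout $\Omega H\rightarrow P\oplus H_{0}\rightarrow W\dashrightarrow$), I obtain two $\mathbb{E}$-triangles out of the single object $W$:
$$\xymatrix@C=0.6cm{H_{0}\ar[r]&W\ar[r]&H\ar@{-->}[r]&}\qquad\text{and}\qquad\xymatrix@C=0.6cm{P\ar[r]&W\ar[r]&L\ar@{-->}[r]&.}$$
From the first triangle, $W\in\mathcal{H}$ since $H_{0},H\in\mathcal{H}$ and $\mathcal{H}$ is extension-closed. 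From the second triangle, applying the long exact sequence of Lemma \ref{lem4} in $\mathbb{E}^{i}(-,H')$ with $H'\in\mathcal{H}$ and using that $P$ is projective (so $\mathbb{E}^{i}(P,H')=0$) and $L\in{}^{\perp}\mathcal{H}$ (so $\mathbb{E}^{i}(L,H')=0$), one gets $\mathbb{E}^{i}(W,H')=0$ for all $i\geq 1$, i.e.\ $W\in{}^{\perp}\mathcal{H}$. Hence $W\in\mathcal{W}$, and the first $\mathbb{E}$-triangle is the required generator sequence.

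I expect the main obstacle to be the bookkeeping in producing both $\mathbb{E}$-triangles from one homotopy pushout $W$: one must check that pushing the two given triangles out along $f$ and $\psi$ yields literally the same cone up to isomorphism, so that the resulting $\mathbb{E}$-triangles $H_{0}\rightarrow W\rightarrow H$ and $P\rightarrow W\rightarrow L$ genuinely share the object $W$. This is exactly where $(ET4)$ and the uniqueness of homotopy pushouts enter; once this compatibility is in place, the extension-closedness and orthogonality computations are routine.
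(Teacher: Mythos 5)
Your proposal is correct and follows essentially the same route as the paper's proof: both start from the syzygy triangle $\Omega H\rightarrow P\rightarrow H$, take a special $\mathcal{H}$-preenvelope of $\Omega H$, and glue the two triangles via the dual of \cite[Proposition 3.15]{NP} into a single object $W$ sitting in $\mathbb{E}$-triangles $H_{0}\rightarrow W\rightarrow H$ and $P\rightarrow W\rightarrow L$, then conclude $W\in\mathcal{H}\bigcap{^{\bot}\mathcal{H}}$ by extension-closure and a coresolving dimension-shift. The only cosmetic difference is the order of the orthogonality check (you first upgrade $L$ to ${}^{\bot}\mathcal{H}$ and then run the long exact sequence, while the paper first gets $\mathbb{E}(W,\mathcal{H})=0$ and then dimension-shifts on $W$), and the pushout compatibility you flag as the main obstacle is exactly what the dual of \cite[Proposition 3.15]{NP} provides in a single application.
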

\begin{proof}
Let $H\in\HH$. Then there is an $\E$-triangle  $\xymatrix@C=0.6cm{X\ar[r]^{}&P\ar[r]^{}&H\ar@{-->}[r]^{}&}$ with $P\in Proj(\C)$ as $\C$ has enough projective objects. Since $\HH$ is specially covariantly finite, we have an $\E$-triangle $\xymatrix@C=0.6cm{X\ar[r]^{}&H'\ar[r]^{}&K\ar@{-->}[r]^{}&}$ with $H'\in\HH$ and $\E(K,\HH)=0$. By the dual of \cite[Proposition 3.15]{NP}, we have the commutative diagram
$$\xymatrix{
     X\ar[d]_{} \ar[r]^{} & P \ar[d]^{}\ar[r]^{}&H\ar@{=}[d] \\
  H^{'} \ar[d]_{} \ar[r]^{} & M \ar[d]^{} \ar[r]^{} & H \\
  K \ar@{=}[r] & K.}
  $$
Since $\HH$ is closed under extensions, it follows that $M\in \HH$.
 Note that $\E(P,\HH)=\E(K,\HH)=0$. Then one has $\E(M,\HH)=0$. We claim that $M\in \HH\bigcap{^{\bot}\HH}$. Indeed, for any positive integer $n$ and $H''\in\HH$, we have an $\E$-triangle sequence
$H''\rightarrow I_{0}\rightarrow I_{-1}\rightarrow\cdots\rightarrow I_{-n+1}\rightarrow L$ with $I_{-i}\in Inj(\C)$ for $0\leq i\leq n-1$. We have $\E^{n+1}(M,H'')\cong\E(M, L)=0$ as $\HH$ is coresolving and $\E(M,\HH)=0$. Hence $M\in \HH\bigcap{^{\bot}\HH}$. The second row in the above diagram implies that $\HH\bigcap{^{\bot}\HH}$ is an $\HH$-projective generator for $\HH$.
\end{proof}




When $\C$ is a Frobenius extriangulated category, the following theorem which is essential to the proof of Theorem \ref{thm5} gives a characterization of hereditary cotorsion pairs $(\mathcal{U}, \mathcal{V})$ on $\C$ with $\widehat{\mathcal{U}}=\C$ and $\widecheck{\mathcal{V}}=\C$.
\begin{thm}\label{thm4}
Let $\C$ be a Frobenius extriangulated category.
The assignments
\begin{center}
$(\mathcal{U}, \mathcal{V})\mapsto\mathcal{V}$ $~~$ and $~~$ $\mathcal{H}\mapsto(\w{\mathcal{M}},\mathcal{H})$,
\end{center}
 where $\mathcal{M}=\mathcal{H}\bigcap{^{\bot}\mathcal{H}}$, give mutually inverse bijections between the following classes:
\begin{enumerate}

\item Hereditary cotorsion pairs $(\mathcal{U}, \mathcal{V})$ on $\C$ with $\widehat{\mathcal{U}}=\widecheck{\mathcal{V}}=\C$.

\item Subcategories $\mathcal{H}$ of $\C$, which are specially covariantly finite and coresolving in $\C$ such that $\w{\mathcal{H}}=\C$ and for any $H\in \mathcal{H}$, there exists a positive integer $t\geq 1$ making $\E^{i}(H,\mathcal{H})=0$ for all $i\geq t$.
\end{enumerate}
\end{thm}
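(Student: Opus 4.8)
The plan is to show that each of the two assignments lands in the opposite class and that they are mutually inverse; write $F\colon\HH\mapsto(\w{\mathcal{M}},\HH)$ and $G\colon(\mathcal{U},\mathcal{V})\mapsto\mathcal{V}$, where $\mathcal{M}=\HH\cap{}^{\perp}\HH$. I would first record a preliminary fact about a hereditary cotorsion pair $(\mathcal{U},\mathcal{V})$ on $\C$: one has $\mathcal{U}={}^{\perp}\mathcal{V}$. Indeed $\mathcal{U}\subseteq{}^{\perp}\mathcal{V}$ by heredity, and if $X\in{}^{\perp}\mathcal{V}$ then the conflation $V\to U\to X$ from Definition \ref{df:cotorsion pair}(2) is classified by an element of $\E(X,V)=0$, so it splits and exhibits $X$ as a summand of $U\in\mathcal{U}$. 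Hence $\mathcal{V}\cap{}^{\perp}\mathcal{V}=\mathcal{U}\cap\mathcal{V}$ is the core of the pair, which I will use repeatedly.

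To see that $F$ is well defined, start with $\HH$ as in (2). Since $\HH$ is coresolving it is closed under extensions and cones of inflations, and together with special covariant finiteness, Lemma \ref{lem11} makes $\mathcal{M}$ an $\HH$-projective generator for $\HH$. Corollary \ref{corollary:4.4} then gives that $(\w{\mathcal{M}},\HH)$ is a cotorsion pair on $\w{\HH}=\C$, which is hereditary by Lemma \ref{lem10} as $\HH$ is coresolving, and which is therefore resolving in its left half; and $\w{\HH}=\C$ is given. The remaining point, $\widehat{\w{\mathcal{M}}}=\C$, is where the finiteness hypothesis enters. I would first prove $\HH\subseteq\widehat{\mathcal{M}}$: for $H\in\HH$ the generator property yields an $\E$-triangle sequence $\cdots\to M_{1}\to M_{0}\to H$ with $M_{j}\in\mathcal{M}$ and $j$-th syzygy $H_{j}\in\HH$; since $\mathcal{M}\subseteq{}^{\perp}\HH$, dimension shifting through Lemma \ref{lem4} gives $\E^{i}(H_{j},\HH)\cong\E^{i+j}(H,\HH)$, so the hypothesis produces $t$ with $H_{t-1}\in\HH\cap{}^{\perp}\HH=\mathcal{M}$, whence the resolution stops and $H\in\widehat{\mathcal{M}}$. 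For arbitrary $C$, Definition \ref{df:cotorsion pair}(2) gives a conflation $V^{C}\to U^{C}\to C$ with $U^{C}\in\w{\mathcal{M}}$ and $V^{C}\in\HH\subseteq\widehat{\mathcal{M}}\subseteq\widehat{\w{\mathcal{M}}}$; applying the dual of Lemma \ref{lem9} in $\C^{\mathrm{op}}$ to $\w{\mathcal{M}}$ (which is resolving, and for which $\mathcal{M}$ serves as a $\w{\mathcal{M}}$-injective cogenerator, supplied by Definition \ref{df:cotorsion pair}(3) and heredity) yields $C\in\widehat{\w{\mathcal{M}}}$.

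That $G$ is well defined is shorter: by Lemma \ref{lem10} heredity makes $\mathcal{V}$ coresolving, $\w{\mathcal{V}}=\C$ is given, and Definition \ref{df:cotorsion pair}(3) together with $\E(\mathcal{U},\mathcal{V})=0$ shows $\mathcal{V}$ is specially covariantly finite, the conflation $C\to V_{C}\to U_{C}$ being the required $\E$-triangle. The finiteness condition follows from $\widehat{\mathcal{U}}=\C$: each $V\in\mathcal{V}$ admits a finite $\mathcal{U}$-resolution of some length $n$, and heredity plus dimension shifting (Lemma \ref{lem4}) give $\E^{i}(V,\mathcal{V})=0$ for all $i\geq n+1$.

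It remains to check the composites. Since $G(F(\HH))=\HH$, one has $GF=\mathrm{id}$ at once. For $FG=\mathrm{id}$ I must prove $\w{\mathcal{M}}=\mathcal{U}$ with $\mathcal{M}=\mathcal{U}\cap\mathcal{V}$ the core. The inclusion $\w{\mathcal{M}}\subseteq\mathcal{U}$ follows by induction on the coresolution length from $\mathcal{M}\subseteq\mathcal{U}$ and the fact that $\mathcal{U}$, being resolving, is closed under cocones of deflations. For $\mathcal{U}\subseteq\w{\mathcal{M}}$, given $U\in\mathcal{U}$ I iterate Definition \ref{df:cotorsion pair}(3): each step produces $U^{(k)}\to V^{(k)}\to U^{(k+1)}$ with $V^{(k)}\in\mathcal{V}$, hence $V^{(k)}\in\mathcal{U}\cap\mathcal{V}=\mathcal{M}$ (as $\mathcal{U}$ is extension-closed) and $U^{(k+1)}\in\mathcal{U}$, building an $\mathcal{M}$-coresolution of $U$ whose cosyzygies lie in $\mathcal{U}$. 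Setting $d=\mathrm{coresdim}_{\mathcal{V}}(U)<\infty$ (finite since $\w{\mathcal{V}}=\C$) and invoking Proposition \ref{thm3} with $\mathcal{X}=\mathcal{V}$ and $\mathcal{W}=\mathcal{M}$ (an $\mathcal{V}$-projective generator by Lemma \ref{lem11}) forces the $d$-th cosyzygy into $\mathcal{V}$, hence into $\mathcal{M}$, so the coresolution terminates and $U\in\w{\mathcal{M}}$. I expect the two symmetric places where an abstract finiteness condition must be converted into a global resolution/coresolution statement—namely $\widehat{\w{\mathcal{M}}}=\C$ in the well-definedness of $F$ and the equality $\mathcal{U}=\w{\mathcal{M}}$ in $FG=\mathrm{id}$—to be the main obstacles, since each requires combining the cotorsion-pair conflations with the dimension-shifting machinery of Section \ref{coresolution-dimension} and its dual.
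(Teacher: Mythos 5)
Your proof is correct, and its overall skeleton is the same as the paper's: well-definedness of both assignments rests on Lemma \ref{lem11}, Corollary \ref{corollary:4.4} and Lemma \ref{lem10}, the composite $\mathcal{H}\mapsto(\w{\mathcal{M}},\mathcal{H})\mapsto\mathcal{H}$ is trivially the identity, and the remaining content is concentrated in the two statements $\widehat{\w{\mathcal{M}}}=\C$ and $\mathcal{U}=\w{\mathcal{M}}$. You differ from the paper in exactly these two sub-steps. For $\widehat{\w{\mathcal{M}}}=\C$, the paper takes the conflation $L\to Y\to X$ supplied by the cotorsion pair and resolves $L$ by \emph{projectives}, using $Proj(\C)\subseteq\mathcal{M}$ and Proposition \ref{prop1} to get $\Omega^{t-1}(L)\in\w{\mathcal{H}}\bigcap{}^{\perp}\mathcal{H}=\w{\mathcal{M}}$; you instead resolve each $H\in\mathcal{H}$ by $\mathcal{M}$ itself, so all syzygies stay in $\mathcal{H}$ and the dimension shift lands $H_{t-1}$ in $\mathcal{H}\bigcap{}^{\perp}\mathcal{H}=\mathcal{M}$, yielding the slightly sharper intermediate fact $\mathcal{H}\subseteq\widehat{\mathcal{M}}$. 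One remark here: your appeal to the dual of Lemma \ref{lem9} is unnecessary force, since the implication you actually use --- that $V^{C}\in\widehat{\w{\mathcal{M}}}$ forces $C\in\widehat{\w{\mathcal{M}}}$ for a conflation $V^{C}\to U^{C}\to C$ with $U^{C}\in\w{\mathcal{M}}$ --- is the trivial splicing direction of that lemma, so the resolving and injective-cogenerator hypotheses you carefully verify are not needed (the paper performs the same splicing silently). For $\mathcal{U}=\w{\mathcal{M}}$, the paper gets it in one line from Proposition \ref{prop1}(3) together with ${}^{\perp}\mathcal{V}=\mathcal{U}$ (\cite[Remark 4.4]{NP} plus heredity), namely $\w{\mathcal{M}}=\w{\mathcal{V}}\bigcap{}^{\perp}\mathcal{V}=\C\bigcap\mathcal{U}$; you rebuild it by hand, iterating Definition \ref{df:cotorsion pair}(3) to produce an $\mathcal{M}$-coresolution whose cosyzygies lie in $\mathcal{U}$ and terminating it with Proposition \ref{thm3}. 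Both are valid: your route is more self-contained (you also prove $\mathcal{U}={}^{\perp}\mathcal{V}$ directly and explicitly check special covariant finiteness of $\mathcal{V}$, points the paper cites or leaves implicit), while the paper's is shorter because it reuses the Section \ref{coresolution-dimension} machinery wholesale.
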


\begin{proof}
Let $(\mathcal{U}, \mathcal{V})$ be a hereditary cotorsion pair on $\C$ with $\widehat{\mathcal{U}}=\widecheck{\mathcal{V}}=\C$. Then $\mathcal{V}$ is coresolving in $\C$ by Lemma \ref{lem10}.

For any $V\in \mathcal{V}, V\in \widehat{\mathcal{U}}$ by assumption. Hence there exists a positive integer $n$ such that
$$U_{n}\rightarrow U_{n-1}\rightarrow\cdots\rightarrow U_{0}\rightarrow V$$
is an $\E$-triangle sequence with $U_{i}\in \mathcal{U}$ for $0\leq i\leq n$. We have $\E^{n+1}(V, \mathcal{V})\cong\E(U_{n}, \mathcal{V})=0$. Clearly, $n+1$ is the desired $t$.

Assume that $\mathcal{H}$ is a subcategory of $\C$ satisfying the condition in (2). By Lemma \ref{lem11}, $\mathcal{M}=\HH\bigcap{^{\bot}\HH}$ is an $\mathcal{H}$-projective generator for $\mathcal{H}$. By Proposition \ref{prop1}, $\w{\mathcal{M}}=\w{\mathcal{H}}\bigcap{^{\bot}\mathcal{H}}={^{\bot}\mathcal{H}}$, where the second equality is due to $\w{\mathcal{H}}=\C$. Therefore, $(\w{\mathcal{M}},\mathcal{H})$ is a hereditary cotorsion pair on $\C$ by Corollary \ref{corollary:4.4} and Lemma \ref{lem10}.

Let $X$ be an object in $\C$. Then we have an $\E$-triangle
$\xymatrix@C=0.6cm{L\ar[r]^{}&Y\ar[r]^{}&X\ar@{-->}[r]^{}&}$ with $Y\in\w{\mathcal{M}}$ and $L\in \mathcal{H}$. We claim that $L\in \tilde{\mathcal{M}}:=\widehat{(\check{\mathcal{M}})}$.
 Indeed, note that there exists a positive integer $t$ such that $\E^{i}(L,\mathcal{H})=0$ for all $i\geq t$. If $t=1$, then $L\in \w{\mathcal{M}}$ by the above $\E$-triangle, as desired. Assume $t>1$, then $\E^{t}(L,\mathcal{H})=\E(\Omega^{t-1}(L),\mathcal{H})=0$. Since $\mathcal{H}$ is coresolving, $\Omega^{t-1}(L)\in {^{\bot}\mathcal{H}}$. Meanwhile $\Omega^{t-1}(L)\in \w{\mathcal{H}}$ as $\mathcal{H}$ contains $Proj(\C)$. Hence $\Omega^{t-1}(L)\in \w{\mathcal{H}}\bigcap{^{\bot}\mathcal{H}}=\w{\mathcal{M}}$ by Proposition \ref{prop1}.
 Since $Proj(\C)\subseteq\mathcal{M}$, it follows that $L\in \tilde{\mathcal{M}}$. Hence $\C=\widehat{(\check{\mathcal{M}})}$, as desired.

Based on the above argument, it is enough to check that the compositions
\begin{center}
$(\mathcal{U}, \mathcal{V})\mapsto\mathcal{V}\mapsto (\w{\mathcal{V}\bigcap{^{\bot}\mathcal{V}}},\mathcal{V})$ $~~$ and $~~$ $\mathcal{H}\mapsto(\w{\mathcal{M}},\mathcal{H})\mapsto\mathcal{H}$
\end{center}
 are identities. Since $\mathcal{V}\bigcap{^{\bot}\mathcal{V}}$ is a $\mathcal{V}$-projective generator for $\mathcal{V}$,  $\w{\mathcal{V}\bigcap{^{\bot}\mathcal{V}}}=\w{\mathcal{V}}\bigcap
 {^{\bot}\mathcal{V}}=\C\bigcap\mathcal{U}=\mathcal{U}$, where the first equality is due to Proposition \ref{prop1} and the second equality is due to \cite[Remark 4.4]{NP}.
This completes the proof.
\end{proof}

Next we recall the triangulated structure of $\underline{\C}$ which is induced by a Frobenius extriangulated category $\C$~(see~\cite{ZZ}).

Let $\C$ be a Frobenius extriangulated category. For any $A\in \C$, we have an $\E$-triangle
$\xymatrix{A\ar[r]^{\alpha}&I\ar[r]^{\beta}&D\ar@{-->}[r]^{\delta}&,}$
where $I\in Inj(\C)$. Define $\mathbb{G}(A)=D$ to be the image of $D$ in $\underline{\C}$. Then $[1]:=\mathbb{G}:\underline{\C}\rightarrow \underline{\C}$ is an equivalence as an additive functor.

Let $\xymatrix{A\ar[r]^{x}&B\ar[r]^{y}&C\ar@{-->}[r]^{\theta}&}$ be an $\E$-triangle in $\C$. Then there exists the following commutative diagram
$$\xymatrix@C=2em{A\ar[r]^{x}\ar@{=}[d]&B\ar[r]^{y}\ar[d]^{b}&C\ar@{-->}[r]^{\theta}\ar[d]^{z}&\\
                  A\ar[r]^{\alpha}&I\ar[r]^{\beta}&A[1]\ar@{-->}[r]^{\delta}&
  }$$
as $I$ is injective. So we have the sequence
$\xymatrix@C=0.6cm{A\ar[r]^{\underline{x}}&B\ar[r]^{\underline{y}}&C\ar[r]^{\underline{z}}&A[1]}$
in $\underline{\C}$, which is called a standard triangle. The triangles in $\underline{\C}$ are defined as the sequences which are isomorphic to some standard triangle obtained in this way.

\begin{lem}\label{lem12}
Let  $(\mathcal{C}, \mathbb{E}, \mathfrak{s})$ be a Frobenius extriangulated category.

\begin{enumerate}
\item $X$ is a zero object in $\underline{\C}$ if and only if $X$ is an injective object in $\C$.
\item Let $X$ and $Y$ be in $\C$. Then $X \cong Y$ in $\underline{\C}$ if and only if there exist two injective objects $I$ and $Q$ such that $X\oplus I\cong Y\oplus Q$.
\end{enumerate}
\end{lem}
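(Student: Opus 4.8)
The guiding principle is that a morphism of $\C$ becomes zero in $\underline{\C}$ exactly when it factors through an injective (equivalently projective) object, so both assertions reduce to manipulations with such factorizations. For part (1), the plan is to note that $X$ is a zero object of $\underline{\C}$ iff $\underline{1_X}=0$, i.e. iff $1_X=\beta\alpha$ for some $\alpha\colon X\to I$, $\beta\colon I\to X$ with $I$ injective; such a factorization exhibits $X$ as a retract of $I$, and since injectives are closed under direct summands (if $\E(\C,I)=0$ then $\E(\C,X)$ is a summand of $0$), $X$ is injective. Conversely, if $X$ is injective then $1_X=1_X\circ 1_X$ factors through the injective object $X$, so $X\cong 0$ in $\underline{\C}$. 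The easy implication of part (2) is equally direct: if $X\oplus I\cong Y\oplus Q$ in $\C$ with $I,Q$ injective, then $I\cong Q\cong 0$ in $\underline{\C}$ by part (1), and the coproduct inclusion $X\to X\oplus I$ is an isomorphism of $\underline{\C}$ (its section fails to be inverse only by $\iota_I\pi_I$, which factors through $I$), whence $X\cong X\oplus I\cong Y\oplus Q\cong Y$ in $\underline{\C}$.

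The substance is the forward direction of part (2). I would fix mutually inverse isomorphisms $\underline f\colon X\to Y$ and $\underline g\colon Y\to X$ in $\underline{\C}$, lift them to $f,g$ in $\C$, so that $1_X-gf$ factors through an injective object $J$, say $1_X-gf=st$ with $t\colon X\to J$ and $s\colon J\to X$. First I would normalize this factorization through a chosen inflation: pick any inflation $i_X\colon X\to I$ with $I$ injective (enough injectives), and, using the long exact sequence of Lemma \ref{lem2} (injectivity of $J$ lets $t$ extend along the inflation $i_X$ to some $\tilde t\colon I\to J$ with $\tilde t\,i_X=t$), rewrite $1_X-gf=\beta i_X$ with $\beta:=s\tilde t$. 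Next I would form the homotopy pushout $\E$-triangle $X\xrightarrow{\binom{f}{i_X}}Y\oplus I\to M$ supplied by \cite[Proposition 1.20]{LN}; the reason for passing to the honest inflation $i_X$ is precisely that this presents $\binom{f}{i_X}$ as an inflation, so that its cone $M$ genuinely exists as an object of $\C$. Since $(g\ \beta)\binom{f}{i_X}=gf+\beta i_X=1_X$, the map $(g\ \beta)$ is a retraction, so $\binom{f}{i_X}$ is a split monomorphism; by the splitting criterion for $\E$-triangles \cite[Corollary 3.5]{NP} the triangle splits and $Y\oplus I\cong X\oplus M$ in $\C$.

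It then remains to show the complement $M$ is injective, which is where part (1) re-enters. The projection $Y\oplus I\to Y$ is an isomorphism in $\underline{\C}$ (as $I\cong 0$), and its composite with $\binom{f}{i_X}$ is $f$, so $\underline{\binom{f}{i_X}}$ is an isomorphism; transporting this along the splitting $Y\oplus I\cong X\oplus M$, the coproduct inclusion $X\to X\oplus M$ becomes an isomorphism in $\underline{\C}$, which forces $\underline{\pi_M}=0$ and hence $\underline{1_M}=0$, i.e. $M\cong 0$ in $\underline{\C}$. By part (1), $M$ is injective, and then $X\oplus M\cong Y\oplus I$ with $M,I$ injective is exactly the claimed relation. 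The one genuinely delicate point, and the step I would guard most carefully, is producing the complement $M$: a bare split monomorphism need not admit a complement when idempotents may fail to split, so the argument must realize $\binom{f}{i_X}$ as an \emph{inflation} (via the chosen inflation $i_X$ together with \cite[Proposition 1.20]{LN}) before invoking \cite[Corollary 3.5]{NP}; everything else is routine bookkeeping with the ideal of maps factoring through injectives.
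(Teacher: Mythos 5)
Your proposal is correct and takes essentially the same route as the paper: after lifting the mutually inverse stable isomorphisms, both arguments rewrite $1_X-gf$ as a factorization through an inflation into an injective object, observe that the resulting map $\binom{f}{i_X}$ is a split inflation whose $\E$-triangle therefore splits, and then prove that the complementary summand is zero in $\underline{\C}$, hence injective by part (1). The only cosmetic deviations are your functorial retract argument in part (1) (the paper instead splits an $\E$-triangle obtained from a chosen inflation) and your citation of \cite[Proposition 1.20]{LN} to realize $\binom{f}{i_X}$ as an inflation, where the paper invokes \cite[Corollary 3.16]{NP} after its ``without loss of generality $a$ is an inflation'' step; both are legitimate.
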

\begin{proof}(1). If $X$ is a zero object in $\underline{\C}$, then $\underline{\mathrm{Id}_{X}}=0$. So there exist $u:X\rightarrow Q$ and $h:Q\rightarrow X$ with $hu=\mathrm{Id}_{X}$ and $Q\in Inj(\C)$. Since $\C$ has enough injectives, there exists an inflation $f:X\rightarrow I$ with $I\in Inj(\C).$ Hence there exists $t:I\rightarrow Q$ with $tf=u$. Set $g:=ht$. Then $gf=htf=hu=\mathrm{Id}_{X}$.
 Therefore, we have a split $\E$-triangle $\xymatrix@C=0.6cm{X\ar[r]^{f}&I\ar[r]^{}&D\ar@{-->}[r]^{}&.}$ Thus $X$ is a direct summand of $I$, and hence $X$ is an injective object. The converse is clear.

                (2). Assume that we have $X\xrightarrow{\underline{f}}Y\xrightarrow{\underline{g}}X$ with $\underline{gf}=$$\underline{\mathrm{Id}_{X}}$ and $\underline{fg}=$$\underline{\mathrm{Id}_{Y}}$ in $\underline{\C}$. Then there exists a commutative diagram in $\C$
                $$\xymatrix@C=0.6cm{
  X \ar[rr]^{gf-\mathrm{Id}_{X}} \ar[dr]_{a}
                &  &    X     \\
                & Q\ar[ur]_{b}                 }$$ such that $Q$ is an injective object. So
                $\tiny\begin{bmatrix}g&-b\end{bmatrix}$$\tiny\begin{bmatrix}f\\a\end{bmatrix}$$=\mathrm{Id}_{X}$.
                Without loss of generality we may assume that $a$ is an inflation.  Thus $\tiny\begin{bmatrix}f\\a\end{bmatrix}$ is also an inflation by \cite[Corollary 3.16]{NP}. Hence we have an $\E$-triangle $$\xymatrix{X\ar[r]^{\tiny\begin{bmatrix}f\\a\end{bmatrix}}&Y\oplus Q\ar[r]^e&I\ar@{-->}[r]^{0}&.}$$
                 Thus we have the following commutative diagram
                $$\xymatrix@C=2.5em{X\ar[r]^{\tiny\begin{bmatrix}1\\0\end{bmatrix}}\ar@{=}[d]& X\oplus I\ar[r]^{\tiny\begin{bmatrix}
                0&1\end{bmatrix}}\ar[d]^h&I\ar@{-->}[r]^{0}\ar@{=}[d]&\\
  X\ar[r]^{\tiny\begin{bmatrix}f\\a\end{bmatrix}}&Y\oplus Q\ar[r]^e&I\ar@{-->}[r]^{0}&
  .}$$
 So $h:X\oplus I\rightarrow Y\oplus Q$ is an isomorphism and $\tiny\begin{bmatrix}f\\a\end{bmatrix}$$=h\tiny\begin{bmatrix}1\\0\end{bmatrix}$. Then $\underline{\tiny\begin{bmatrix}1\\0\end{bmatrix}}:X\rightarrow X\oplus I$ is an isomorphism in $\underline{\C}$ as $\underline{f}$ is an isomorphism.
  Therefore, there exists a morphism $\tiny\begin{bmatrix}m&n\end{bmatrix}:X\oplus I\ra X$ such that $\underline{\tiny\begin{bmatrix}1\\0\end{bmatrix}\tiny\begin{bmatrix}m&n\end{bmatrix}}=\tiny\begin{bmatrix}\underline{m}&\underline{n}\\0&0\end{bmatrix}=\tiny\begin{bmatrix}\underline{1}&0\\0&\underline{1}\end{bmatrix}.$
  It follows that $\underline{\mathrm{Id}_{I}}=0$ in $\underline{\C}$, then $I$ is injective in $\C$ by $(1)$. This completes the proof.
\end{proof}

\begin{lem}\label{lem13}
Let $\C$ be a Frobenius extriangulated category. If $A$ and $B$ are in $\C$, then $\E^{i}(A,B)\cong\underline{\C}(A,B[i])$ for all $i\geq 1$.
\end{lem}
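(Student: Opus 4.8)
The plan is to reduce everything to the case $i=1$ and then invoke the Liu--Nakaoka shift formula. Recall from \cite{LN} that $\E^{i}(A,B)\cong\E(A,\Sigma^{i-1}B)$ for $i\geq1$, and that the cosyzygy construction defining $\Sigma$ coincides on objects with the construction of the suspension $\mathbb{G}=[1]$ of $\underline{\C}$, so that $\Sigma^{i-1}B\cong B[i-1]$ in $\underline{\C}$. Granting the case $i=1$ for an arbitrary pair of objects, one applies it to the pair $(A,\Sigma^{i-1}B)$ to obtain
$$\E^{i}(A,B)\cong\E(A,\Sigma^{i-1}B)\cong\underline{\C}(A,(\Sigma^{i-1}B)[1])=\underline{\C}(A,\Sigma^{i}B)\cong\underline{\C}(A,B[i]),$$
which is the desired formula. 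Thus the entire content of the lemma lies in establishing the isomorphism $\E(A,B)\cong\underline{\C}(A,B[1])$.

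For this base case I would start from the $\E$-triangle $\xymatrix@C=0.6cm{B\ar[r]^{\alpha}&I\ar[r]^{\beta}&B[1]\ar@{-->}[r]^{\delta_{B}}&}$ with $I\in Inj(\C)$ that defines $B[1]=\mathbb{G}(B)$, and apply the second long exact sequence of Lemma \ref{lem2} with the first variable fixed at $A$. Since $I$ is injective we have $\E(A,I)=0$, so the sequence yields
$$\xymatrix@C=0.8cm{\mathcal{C}(A,I)\ar[r]^{\mathcal{C}(A,\beta)}&\mathcal{C}(A,B[1])\ar[r]^{(\delta_{B})_{\sharp}}&\E(A,B)\ar[r]&0,}$$
whence $(\delta_{B})_{\sharp}\colon f\mapsto f^{*}\delta_{B}$ is surjective with kernel equal to the image of $\mathcal{C}(A,\beta)$, that is, the subgroup of morphisms $A\to B[1]$ factoring through $\beta$.

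The crux, and the step where the Frobenius hypothesis is essential, is to identify this kernel with the subgroup $\mathcal{P}(A,B[1])$ of morphisms factoring through an injective object, since the quotient of $\mathcal{C}(A,B[1])$ by the latter is by definition $\underline{\C}(A,B[1])$. One inclusion is immediate, as $\beta$ itself factors through the injective $I$. For the reverse, suppose $f=hg$ with $g\colon A\to J$, $h\colon J\to B[1]$ and $J$ injective; because $\C$ is Frobenius, $J$ is also projective, so $\E(J,B)=0$, and applying Lemma \ref{lem2} to the same triangle with first variable $J$ shows that $\mathcal{C}(J,\beta)$ is surjective. Hence $h$ lifts to some $\tilde{h}\colon J\to I$ with $\beta\tilde{h}=h$, and therefore $f=\beta(\tilde{h}g)$ factors through $\beta$. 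Consequently $(\delta_{B})_{\sharp}$ descends to an isomorphism $\underline{\C}(A,B[1])\xrightarrow{\ \cong\ }\E(A,B)$, which completes the base case and hence the lemma. I expect this kernel identification to be the main obstacle: once the coincidence of ``factoring through $\beta$'' and ``factoring through an arbitrary injective'' is secured via the projectivity of injectives, the rest of the argument is formal.
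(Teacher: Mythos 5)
Your proof is correct, and its skeleton matches the paper's: both rest on the case $i=1$ plus a dimension-shifting reduction via the Liu--Nakaoka identifications $\E^{i+1}(X,Y)\cong\E(X,\Sigma^{i}Y)\cong\E(\Omega^{i}X,Y)$. There are two genuine differences. First, where the paper disposes of the base case $\E(A,B)\cong\underline{\C}(A,B[1])$ by citing \cite[Lemma 3.8]{CZZ}, you prove it outright: applying the second exact sequence of Lemma \ref{lem2} to the defining triangle $B\xrightarrow{\alpha}I\xrightarrow{\beta}B[1]$ gives that $(\delta_{B})_{\sharp}$ is surjective with kernel the maps factoring through $\beta$, and the Frobenius hypothesis (an injective $J$ is projective, so $\E(J,B)=0$ and $\mathcal{C}(J,\beta)$ is surjective) identifies that kernel with the ideal of maps factoring through an injective object, which is precisely what is quotiented out in $\underline{\C}$. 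This makes the lemma self-contained and isolates correctly the one place where Frobenius is indispensable. Second, your reduction shifts the second variable by cosyzygies ($\E^{i}(A,B)\cong\E(A,\Sigma^{i-1}B)$, with $\Sigma^{i-1}B\cong B[i-1]$ in $\underline{\C}$ immediate from the construction of $\mathbb{G}=[1]$), whereas the paper inducts by shifting the first variable by syzygies, using $\E^{i}(A,B)\cong\E^{i-1}(\Omega A,B)$ and $\underline{\C}(\Omega A,B[i-1])\cong\underline{\C}(A,B[i])$ (the latter because $\Omega$ is inverse to $[1]$ on $\underline{\C}$). The two reductions are strictly dual and equally valid; yours has the mild advantage that the comparison $\Sigma\cong[1]$ requires no argument, since $[1]$ is defined as the cosyzygy, while the paper's requires knowing $\Omega\cong[-1]$ on the stable category.
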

\begin{proof}
By \cite[Lemma 3.8]{CZZ}, we have $\E(A,B)=\underline{\C}(A,B[1]).$ We proceed by induction. Note that we have $\E^{i}(A,B)\cong\E^{i-1}(\Omega A,B)$ and ${\underline{\C}}(\Omega A,B[i-1])\cong {\underline{\C}}(A,B[i])$ for all $i\geq 1$. Thus $\E^{i}(A,B)\cong\E^{i-1}(\Omega A,B)\cong{\underline{\C}}(\Omega A,B[i-1])\cong {\underline{\C}}(A,B[i])$ by the induction hypothesis.
\end{proof}

We recall the definition of a (bounded) co-t-structure.

\begin{definition} \cite{Bondarko,Pa}  {\rm Let $\mathcal{T}$ be a triangulated category. A \emph{co-t-structure} on $\mathcal{T}$ is a pair $(\mathcal{A},\mathcal{B})$ of subcategories of $\mathcal{T}$ such that
\begin{enumerate}
\item $\mathcal{A}[-1]\subseteq \mathcal{A}$ and $\mathcal{B}[1]\subseteq \mathcal{B}$.

\item $\mathrm{Hom}_{\mathcal{T}}(\mathcal{A}[-1],\mathcal{B})=0$.

\item Any object $T\in{\mathcal{T}}$ has a distinguished triangle
$$\xymatrix@C=0.6cm{X\ar[r]&T\ar[r]&Y\ar[r]&X[1],}$$
in $\mathcal{T}$ with $X\in{\mathcal{A}[-1]}$ and $Y\in{\mathcal{B}}$.
\end{enumerate}}
\end{definition}

Recall from \cite{MSSS2} that a co-t-structure $(\mathcal{A},\mathcal{B})$ is said to be \emph{bounded} if $\bigcup\limits_{n\in\mathbb{Z}}\mathcal{A}[n]=\mathcal{T}=\bigcup\limits_{n\in\mathbb{Z}}\mathcal{B}[n].$
Moreover, the bijective correspondence between bounded co-t-structures on a triangulted category $\mathcal{T}$ and silting subcategories of $\mathcal{T}$ has been shown by Mendoza Hern$\acute{\mathrm{a}}$ndez, S$\acute{\mathrm{a}}$enz, Santiago Vargas and Souto Salorio in \cite{MSSS2}.

\begin{thm}\cite[Corollary 5.9]{MSSS2}\label{thm:4.10} Let $\mathcal{T}$ be a triangulated category. The assignments
\begin{center}
 ${\mathcal{M}}\mapsto$~$(\widecheck{\mathcal{M}},{\mathcal{M}^{\bot}})$~$~~$ and $~~$~$(\mathcal{A},\mathcal{B})\mapsto{\mathcal{A}\bigcap{\mathcal{B}}}$
\end{center}
  give mutually inverse bijections between the following classes: 
\begin{enumerate}
\item Silting subcategories $\mathcal{M}$ of $\mathcal{T}$.
\item Bounded co-t-structures $(\mathcal{A},\mathcal{B})$ on $\mathcal{T}$.
\end{enumerate}
\end{thm}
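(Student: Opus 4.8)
Throughout I regard the triangulated category $\mathcal{T}$ as a Frobenius extriangulated category: the triangulated structure yields $\E(A,B)=\mathrm{Hom}_{\mathcal{T}}(A,B[1])$ and hence $\E^{i}(A,B)=\mathrm{Hom}_{\mathcal{T}}(A,B[i])$ for $i\geq 1$, while $Proj(\mathcal{T})=Inj(\mathcal{T})=0$ and $\underline{\mathcal{T}}=\mathcal{T}$. For a silting subcategory $\mathcal{M}$ I put $\mathcal{X}=\mathcal{M}^{\perp}$ and $\mathcal{W}=\mathcal{M}$. The silting conditions give $\mathcal{M}\subseteq\mathcal{M}^{\perp}$ and $\mathcal{M}\subseteq{^{\perp}(\mathcal{M}^{\perp})}$, and by \cite[Lemma 2.6]{DLWW} every object of $\mathcal{M}^{\perp}$ admits a right $\mathcal{M}$-approximation fitting into an $\E$-triangle with cocone in $\mathcal{M}^{\perp}$; thus $\mathcal{W}$ is an $\mathcal{X}$-projective generator for $\mathcal{X}$. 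From the long exact sequences of Lemmas \ref{lem2} and \ref{lem4} one checks that $\mathcal{M}^{\perp}$ is closed under extensions and under cones of inflations and satisfies $\mathcal{M}^{\perp}[1]\subseteq\mathcal{M}^{\perp}$, so all hypotheses of Corollary \ref{corollary:4.4} are in force.

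For $(1)\Rightarrow(2)$, Corollary \ref{corollary:4.4} makes $(\w{\mathcal{M}},\mathcal{M}^{\perp})$ a cotorsion pair on the extriangulated category $\w{\mathcal{M}^{\perp}}$, both members being genuine subcategories (closed under summands), as recorded in that corollary. The first key step is the identity $\w{\mathcal{M}^{\perp}}=\mathcal{T}$: since $\mathcal{M}[k]\in\mathcal{M}^{\perp}$ for $k\geq 0$ and $\mathcal{M}[-k]\in\w{\mathcal{M}}\subseteq\w{\mathcal{M}^{\perp}}$ for $k\geq 0$, the category $\w{\mathcal{M}^{\perp}}$ contains every shift of $\mathcal{M}$; it is extension-closed (Corollary \ref{prop2}), summand-closed (Proposition \ref{prop4}), and closed under $[1]$, hence closed under cones (via the rotation $B\to C\to A[1]$), so it is a thick subcategory containing $\mathcal{M}$ and therefore equals ${\rm thick}(\mathcal{M})=\mathcal{T}$. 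With this, Proposition \ref{prop1}(3) gives $\w{\mathcal{M}}=\w{\mathcal{M}^{\perp}}\cap{^{\perp}(\mathcal{M}^{\perp})}={^{\perp}(\mathcal{M}^{\perp})}$, from which $\w{\mathcal{M}}[-1]\subseteq\w{\mathcal{M}}$ is immediate and $\w{\mathcal{M}}$ is seen to be extension-closed. The co-t-structure axioms now follow: the orthogonality $\mathrm{Hom}_{\mathcal{T}}(\w{\mathcal{M}}[-1],\mathcal{M}^{\perp})=\E(\w{\mathcal{M}},\mathcal{M}^{\perp})=0$ is the cotorsion-pair vanishing, and for any $T\in\mathcal{T}=\w{\mathcal{M}^{\perp}}$ Proposition \ref{thm2}(2) produces an $\E$-triangle $T\to X^{T}\to Y^{T}$ with $X^{T}\in\mathcal{M}^{\perp}$ and $Y^{T}\in\w{\mathcal{M}}$, whose rotation $Y^{T}[-1]\to T\to X^{T}\to Y^{T}$ is the required decomposition with $Y^{T}[-1]\in\w{\mathcal{M}}[-1]$ and $X^{T}\in\mathcal{M}^{\perp}$. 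Boundedness holds because $\bigcup_{n}\w{\mathcal{M}}[n]$ and $\bigcup_{n}\mathcal{M}^{\perp}[n]$ are, being shift-stable, extension-closed and summand-closed, triangulated subcategories containing $\mathcal{M}$, hence all of $\mathcal{T}$.

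For $(2)\Rightarrow(1)$, given a bounded co-t-structure $(\mathcal{A},\mathcal{B})$ I set $\mathcal{M}=\mathcal{A}\cap\mathcal{B}$. The orthogonality $\mathrm{Hom}_{\mathcal{T}}(\mathcal{A}[-1],\mathcal{B})=0$ together with $\mathcal{A}[-1]\subseteq\mathcal{A}$ and $\mathcal{B}[1]\subseteq\mathcal{B}$ yields $\mathrm{Hom}_{\mathcal{T}}(\mathcal{M},\mathcal{M}[i])=0$ for all $i>0$, so $\mathcal{M}$ is self-orthogonal; and boundedness, through iterated use of the approximation triangles of the co-t-structure (the standard weight-decomposition induction), writes every object of $\mathcal{T}$ as a finite iterated extension of shifts of objects of the co-heart $\mathcal{M}$, whence $\mathcal{T}={\rm thick}(\mathcal{M})$ and $\mathcal{M}$ is silting.

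It remains to see the assignments are mutually inverse. The composite $\mathcal{M}\mapsto(\w{\mathcal{M}},\mathcal{M}^{\perp})\mapsto\w{\mathcal{M}}\cap\mathcal{M}^{\perp}$ returns $\mathcal{M}$ directly, since Proposition \ref{prop1}(2) gives $\w{\mathcal{M}}\cap\mathcal{M}^{\perp}=\mathcal{M}^{\perp}\cap{^{\perp}(\mathcal{M}^{\perp})}=\mathcal{M}$. For the composite $(\mathcal{A},\mathcal{B})\mapsto\mathcal{M}\mapsto(\w{\mathcal{M}},\mathcal{M}^{\perp})$ I would use that a bounded co-t-structure is determined by its co-heart: boundedness identifies $\mathcal{A}$ with the extension closure of $\{\mathcal{M}[-k]:k\geq0\}$, and a dévissage along the long exact sequences then gives $\mathcal{B}=\mathcal{M}^{\perp}$ and $\mathcal{A}={^{\perp}(\mathcal{M}^{\perp})}=\w{\mathcal{M}}$. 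I expect this last recovery, together with the boundedness bookkeeping that converts ${\rm thick}(\mathcal{M})=\mathcal{T}$ into the equality $\w{\mathcal{M}^{\perp}}=\mathcal{T}$, to be the main obstacle, as it is precisely the point at which the finite-coresolution hypotheses must be reconciled with the two-sided shift-closure of the ambient triangulated category.
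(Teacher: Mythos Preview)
The paper does not supply a proof of this statement; it is quoted from \cite[Corollary~5.9]{MSSS2} as an external input to the proof of Theorem~\ref{thm5}. There is therefore no in-paper argument to compare against.

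Your approach specializes the paper's extriangulated machinery (Propositions~\ref{thm2}, \ref{prop1}, \ref{prop4} and Corollaries~\ref{prop2}, \ref{corollary:4.4}) back to the triangulated setting, which is natural since those results are themselves explicit extriangulated analogs of lemmas in \cite{MSSS1,MSSS2}; so you are effectively reconstructing the original \cite{MSSS2} argument through the paper's generalization and then re-specializing. The forward direction $(1)\Rightarrow(2)$ and the first composite $\mathcal{M}\mapsto(\w{\mathcal{M}},\mathcal{M}^{\perp})\mapsto\w{\mathcal{M}}\cap\mathcal{M}^{\perp}=\mathcal{M}$ are handled correctly; in particular your thickness argument for $\w{\mathcal{M}^{\perp}}=\mathcal{T}$ is fine once one notes that any subcategory containing all shifts of $\mathcal{M}$ and closed under extensions and summands already contains $\mathrm{thick}(\mathcal{M})$.

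The reverse direction $(2)\Rightarrow(1)$ and especially the second composite $(\mathcal{A},\mathcal{B})\mapsto\mathcal{M}\mapsto(\w{\mathcal{M}},\mathcal{M}^{\perp})$ are only sketched: you correctly flag the identifications $\mathcal{A}=\w{\mathcal{M}}$ and $\mathcal{B}=\mathcal{M}^{\perp}$ as ``the main obstacle'' but do not carry them out. In \cite{MSSS2} this is done by showing, via induction on the weight filtration afforded by boundedness, that $\mathcal{A}$ (resp.\ $\mathcal{B}$) coincides with the extension closure of $\{\mathcal{M}[-k]:k\geq0\}$ (resp.\ $\{\mathcal{M}[k]:k\geq0\}$), from which both identifications and the equality $\mathcal{T}=\mathrm{thick}(\mathcal{M})$ follow. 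Without this step your proposal remains an outline rather than a complete proof.
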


\begin{lem}\label{lem14}\cite[Corollary 3.10]{CZZ}
Let $\C$ be a Frobenius extriangulated category. Suppose that $\mathcal{U}$ and $\mathcal{V}$ are two subcategories of $\C$ such that $Proj(\C)\subseteq \mathcal{U}\bigcap\mathcal{V}$. Then $(\mathcal{U}, \mathcal{V})$ is a cotorsion pair with the core $\mathcal{I}=\mathcal{U}\bigcap\mathcal{V}$ on $\C$ if and only if $(\mathcal{\underline{U}}, \mathcal{\underline{V}})$ is a cotorsion pair with the core $\underline{\mathcal{I}}$ on $\underline{\C}.$
\end{lem}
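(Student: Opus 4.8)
The plan is to match the three defining conditions of a cotorsion pair (Definition \ref{df:cotorsion pair}) on $\C$ with those on $\underline{\C}$ one at a time, the passage being governed by Lemma \ref{lem12} and Lemma \ref{lem13}. Throughout I use that $\C$ is Frobenius together with $Proj(\C)\subseteq\mathcal{U}\cap\mathcal{V}$, so that $Inj(\C)=Proj(\C)$ lies in both $\mathcal{U}$ and $\mathcal{V}$.

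I would first dispose of the orthogonality condition and the bookkeeping about cores. Since $\underline{\C}$ has the same objects as $\C$, I claim $\underline{\mathcal{U}}$ and $\underline{\mathcal{V}}$ are subcategories of $\underline{\C}$ and that $\underline{\mathcal{U}}\cap\underline{\mathcal{V}}=\underline{\mathcal{U}\cap\mathcal{V}}=\underline{\mathcal{I}}$: if $\underline{X}\cong\underline{V}$ in $\underline{\C}$ with $V\in\mathcal{V}$, then Lemma \ref{lem12}(2) gives injectives $I,Q$ with $X\oplus I\cong V\oplus Q\in\mathcal{V}$, so $X\in\mathcal{V}$ since $\mathcal{V}$ is closed under direct summands; the same computation yields closure of $\underline{\mathcal{V}}$ (and $\underline{\mathcal{U}}$) under summands in $\underline{\C}$ and the stated identification of cores. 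Hence the cores correspond automatically once the pair is shown to be a cotorsion pair, and it remains to verify conditions (1)--(3). For condition (1), Lemma \ref{lem13} with $i=1$ gives $\E(U,V)\cong\underline{\C}(U,V[1])$ for all $U\in\mathcal{U}$ and $V\in\mathcal{V}$; since the extension bifunctor of the triangulated category $\underline{\C}$ is $\underline{\C}(-,-[1])$, the vanishing $\E(\mathcal{U},\mathcal{V})=0$ is equivalent to the corresponding orthogonality on $\underline{\C}$.

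For the direction $(\mathcal{U},\mathcal{V})\Rightarrow(\underline{\mathcal{U}},\underline{\mathcal{V}})$, each conflation $V^{C}\to U^{C}\to C$ of condition (2) on $\C$ yields, by the construction of standard triangles recalled before Lemma \ref{lem12}, a distinguished triangle $V^{C}\to U^{C}\to C\to V^{C}[1]$ in $\underline{\C}$, which is exactly the triangle required by condition (2) on $\underline{\C}$; condition (3) is symmetric. The converse is the substantive part. Given $C\in\C$, condition (2) on $\underline{\C}$ supplies a triangle $\underline{V}\to\underline{U}\to\underline{C}\xrightarrow{w}\underline{V}[1]$ with $\underline{U}\in\underline{\mathcal{U}}$ and $\underline{V}\in\underline{\mathcal{V}}$, and I may take $V\in\mathcal{V}$. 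Writing $V\xrightarrow{\alpha}I\to V[1]$ (realizing $\delta$, with $I$ injective) for the $\E$-triangle defining the shift on $V$, I would choose a representative $w\colon C\to V[1]$ and realize the pulled-back extension $w^{*}\delta$ by an $\E$-triangle $V\to E\to C$. The crucial point is that the pullback comes with a morphism of $\E$-triangles whose right-hand component is $w$, so the standard triangle attached to $V\to E\to C$ is $\underline{V}\to\underline{E}\to\underline{C}\xrightarrow{w}\underline{V}[1]$, sharing both base and connecting morphism with the given triangle. Uniqueness of the cone of $w$ in $\underline{\C}$ then forces $\underline{E}\cong\underline{U}$, so $E\in\mathcal{U}$ by the closure facts above, while $V\in\mathcal{V}$; thus $V\to E\to C$ is the conflation demanded by condition (2) on $\C$, and condition (3) follows dually using enough projectives.

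I expect the main obstacle to be precisely this lifting step: confirming that the $\E$-triangle realizing $w^{*}\delta$ induces the prescribed connecting morphism $w$ in $\underline{\C}$, so that its third term can be matched with $\underline{U}$ by uniqueness of cones. Once this compatibility between pullbacks of $\E$-extensions and connecting morphisms of standard triangles is secured, the remaining verifications are routine translations through Lemmas \ref{lem12} and \ref{lem13}.
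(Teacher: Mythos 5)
Your argument is correct, but note that the paper itself offers no proof of this lemma: it is imported verbatim as \cite[Corollary 3.10]{CZZ}, so there is no in-paper argument to compare against. Your blind proof is the natural direct verification and almost certainly parallels the one in \cite{CZZ}: isomorphism/summand closure of $\underline{\mathcal{U}},\underline{\mathcal{V}}$ and the identification of cores via Lemma \ref{lem12}(2) (using $Inj(\C)=Proj(\C)\subseteq\mathcal{U}\cap\mathcal{V}$), transfer of the orthogonality condition via Lemma \ref{lem13}, the forward direction by passing from conflations to standard triangles, and the converse by realizing the pulled-back extension $w^{*}\delta$ and invoking uniqueness of cones.

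The one step you rightly flag as the crux deserves to be nailed down, and it does go through: you need that the standard triangle attached to a realization $V\to E\to C$ of $w^{*}\delta$ has connecting morphism $\underline{w}$. Two ingredients give this. First, in the standard-triangle construction the map $z\colon C\to V[1]$ produced by (ET3) satisfies not only $zy=\beta b$ but also the compatibility $z^{*}\delta=\theta$, where $\theta$ is the extension being realized (this compatibility is part of (ET3), not a consequence of the commutativity of the square alone). Second, the connecting morphism is determined in $\underline{\C}$ by this compatibility: if $z_{1}^{*}\delta=z_{2}^{*}\delta$, then by the exact sequence of Lemma \ref{lem2} applied to $V\to I\to V[1]$, namely $\C(C,I)\xrightarrow{\beta_{*}}\C(C,V[1])\xrightarrow{\delta_{\sharp}}\E(C,V)$, the difference $z_{1}-z_{2}$ factors through the injective $I$, so $\underline{z_{1}}=\underline{z_{2}}$. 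Since your representative $w$ satisfies $w^{*}\delta=\theta$ by construction, the connecting morphism is $\underline{w}$, and the comparison with the given triangle via TR3 and the five lemma finishes the argument exactly as you say. With that point made explicit (and noting that condition (3) needs no separate appeal to projectives --- the dual argument runs on the same injective-shift construction), your proof is complete.
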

\begin{prop}\label{prop5}
Let $\C$ be a Frobenius extriangulated category. Suppose that $\mathcal{U}$ and $\mathcal{V}$ are two subcategories of $\C$ such that $Proj(\C)\subseteq \mathcal{U}\bigcap\mathcal{V}$.
 Then $(\mathcal{U}, \mathcal{V})$ is a hereditary cotorsion pair on $\C$ if and only if $(\mathcal{\underline{U}}, \mathcal{\underline{V}})$ is a co-t-structure on $\underline{\C}.$
\end{prop}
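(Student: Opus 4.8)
The plan is to reduce the statement to Lemma~\ref{lem14} together with the characterization of co-t-structures coming from the long exact sequences of Lemma~\ref{lem4} and the identification $\E^{i}(A,B)\cong\underline{\C}(A,B[i])$ of Lemma~\ref{lem13}. The key observation is that a cotorsion pair $(\mathcal{U},\mathcal{V})$ on $\C$ already descends to a cotorsion pair $(\underline{\mathcal{U}},\underline{\mathcal{V}})$ on $\underline{\C}$ by Lemma~\ref{lem14}, so the only additional content is to show that the \emph{hereditary} condition $\E^{i}(\mathcal{U},\mathcal{V})=0$ for all $i\geq 1$ (Lemma~\ref{lem10}) corresponds exactly to the \emph{suspension-stability} axioms (1) and (2) of a co-t-structure, namely $\underline{\mathcal{U}}[-1]\subseteq\underline{\mathcal{U}}$ and $\underline{\mathcal{V}}[1]\subseteq\underline{\mathcal{V}}$. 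The approximation axiom (3) of a co-t-structure is supplied by condition (3) of Definition~\ref{df:cotorsion pair} after translating a conflation in $\C$ into a standard triangle in $\underline{\C}$.

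For the forward direction, I would first invoke Lemma~\ref{lem14} to get that $(\underline{\mathcal{U}},\underline{\mathcal{V}})$ is a cotorsion pair on $\underline{\C}$; in particular the Hom-vanishing $\underline{\C}(\underline{\mathcal{U}},\underline{\mathcal{V}})$ is encoded by $\E(\mathcal{U},\mathcal{V})=0$ via Lemma~\ref{lem13}. To verify axiom (2) of a co-t-structure, $\mathrm{Hom}_{\underline{\C}}(\underline{\mathcal{U}}[-1],\underline{\mathcal{V}})=0$, I would rewrite $\mathrm{Hom}_{\underline{\C}}(U[-1],V)\cong\mathrm{Hom}_{\underline{\C}}(U,V[1])\cong\E^{2}(U,V)$ using that $[1]$ is an equivalence (so shifting is harmless) and Lemma~\ref{lem13}; the hereditary hypothesis makes this vanish. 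For the stability axioms, since $\mathcal{U}$ is resolving (Lemma~\ref{lem10}), applying the syzygy functor $\Omega=[-1]$ keeps objects in $\w{\mathcal{U}}$-type closure and shows $\underline{\mathcal{U}}[-1]\subseteq\underline{\mathcal{U}}$; dually $\mathcal{V}$ coresolving gives $\underline{\mathcal{V}}[1]\subseteq\underline{\mathcal{V}}$ via the cosyzygy functor $\Sigma=[1]$. The approximation triangle is obtained directly by stabilizing the conflation from Definition~\ref{df:cotorsion pair}(3).

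For the converse, I would start from a co-t-structure $(\underline{\mathcal{U}},\underline{\mathcal{V}})$ and run the same equivalences backward: Lemma~\ref{lem14} recovers that $(\mathcal{U},\mathcal{V})$ is a cotorsion pair on $\C$, and the vanishing $\mathrm{Hom}_{\underline{\C}}(\underline{\mathcal{U}}[-i],\underline{\mathcal{V}})=0$ for all $i\geq 0$ — which follows by iterating axiom (2) against the stability axiom (1) — translates through Lemma~\ref{lem13} into $\E^{i}(\mathcal{U},\mathcal{V})=0$ for all $i\geq 1$, i.e.\ the cotorsion pair is hereditary by Lemma~\ref{lem10}. The one point requiring care is the bookkeeping between the shift $[1]$ on $\underline{\C}$ and the higher extensions $\E^{i}$ on $\C$: because $[1]$ is only an equivalence up to injective summands (Lemma~\ref{lem12}), every identification of $\mathrm{Hom}_{\underline{\C}}(A,B[i])$ with $\E^{i}(A,B)$ must be read through the isomorphism-in-$\underline{\C}$ criterion of Lemma~\ref{lem12}(2), and I must check that the stability inclusions $\underline{\mathcal{U}}[-1]\subseteq\underline{\mathcal{U}}$ and $\underline{\mathcal{V}}[1]\subseteq\underline{\mathcal{V}}$ are genuinely equivalent to the resolving/coresolving properties of $\mathcal{U},\mathcal{V}$ on the nose. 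I expect this translation — matching the iterated shift on the stable category with the inductively defined higher $\E^{i}$, while tracking injective summands — to be the main obstacle; the rest is a formal consequence of Lemmas~\ref{lem10}, \ref{lem13}, and \ref{lem14}.
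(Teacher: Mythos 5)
Your proposal is correct in substance and follows the same overall strategy as the paper: Lemma \ref{lem14} reduces everything to matching the hereditary condition on $\C$ with the shift-stability axiom on $\underline{\C}$, and Lemma \ref{lem10} supplies the dictionary. Where you differ is in which of the equivalent conditions of Lemma \ref{lem10} you verify, and how. In the forward direction the paper does not apply the syzygy functor to objects directly; it takes an arbitrary triangle in $\underline{\C}$ with two terms in $\underline{\mathcal{U}}$, replaces it by an isomorphic standard triangle, and uses Lemma \ref{lem12} plus the resolving property to conclude that $\underline{\mathcal{U}}$ is closed under cocones, hence under $[-1]$. Your shortcut --- resolving implies closed under $\Omega$, and $[-1]$ is computed by $\Omega$ on $\underline{\C}$ --- is simpler and suffices, since only closure under $[-1]$ is needed; the one thing you must make explicit is that $\underline{\mathcal{U}}$ is closed under isomorphisms of $\underline{\C}$, which is exactly where Lemma \ref{lem12}(2), closure under direct summands, and $Proj(\C)\subseteq\mathcal{U}$ enter. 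In the converse the paper verifies condition (1) of Lemma \ref{lem10} (that $\mathcal{U}$ is resolving), by lifting the triangle $U_{2}[-1]\rightarrow X\rightarrow U_{1}\rightarrow U_{2}$ back to $\C$ through Lemma \ref{lem12}; you instead verify condition (3), obtaining $\E^{i}(\mathcal{U},\mathcal{V})=0$ for all $i\geq 1$ from iterated shift-stability together with Lemma \ref{lem13}. Both arguments work, and yours is arguably more economical.

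Three small corrections. First, $\mathrm{Hom}_{\underline{\C}}(U[-1],V)\cong\mathrm{Hom}_{\underline{\C}}(U,V[1])\cong\E(U,V)$, not $\E^{2}(U,V)$; consequently axiom (2) of a co-t-structure is already part of the cotorsion-pair data coming from Lemmas \ref{lem14} and \ref{lem13} and does not need the hereditary hypothesis --- it is only the stability axiom (1) that does. Second, in the converse the vanishing should be indexed by $i\geq 1$, not $i\geq 0$. Third, your worry that $[1]$ is ``only an equivalence up to injective summands'' is misplaced: $[1]=\mathbb{G}$ is an honest auto-equivalence of $\underline{\C}$, and the summand bookkeeping of Lemma \ref{lem12}(2) is needed only when lifting membership from $\underline{\mathcal{U}}$ back to $\mathcal{U}$, where it goes through because subcategories are closed under direct summands and contain $Proj(\C)$. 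Finally, note that both you and the paper pass silently over one routine point: matching a cotorsion pair on $\underline{\C}$ having the stability property with the co-t-structure axioms requires applying the approximation axioms to rotated or shifted objects (for instance, the co-t-structure decomposition of $T$ arises by rotating the stabilized conflation $T\rightarrow V_{T}\rightarrow U_{T}$); this is harmless but worth writing out once.
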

\begin{proof}
``$\Rightarrow$". By Lemma \ref{lem14} and the definition of co-t-structures, it suffices to check that $\mathcal{\underline{U}}$ is closed under $[-1]$. Let
$\xymatrix@C=0.6cm{U_{1}\ar[r]^{\underline{x}}&U_{2}\ar[r]^{\underline{y}}&U_{3}\ar[r]^{\underline{z}}&U_{1}[1]}$
be a triangle in $\underline{\C}$ with $U_{2}$ and $U_{3}$ in $\mathcal{\underline{U}}$. Then there exists a standard triangle $\xymatrix@C=0.6cm{U_{1}^{'}\ar[r]^{\underline{x^{'}}}&U_{2}^{'}\ar[r]^{\underline{y^{'}}}&U_{3}^{'}\ar[r]^{\underline{z^{'}}}&U_{1}^{'}[1]}$
which is isomorphic to the above triangle. Therefore $U_{i}^{'}\oplus P_{i}^{'}\cong U_{i}\oplus P_{i}$ by Lemma \ref{lem12}, where $P_{i}$ and $P_{i}^{'}$ are in $Proj(\C),\ i=1,2,3$.
Then $U_{2}^{'}$ and $U_{3}^{'}$
are in $\mathcal{U}$, it follows that $U_{1}^{'}$ is in $\mathcal{U}$ as $\mathcal{U}$ is resolving. So $U_{1}\in \mathcal{U}$. It follows that $\mathcal{\underline{U}}$ is closed under cocones. Note that $\mathcal{\underline{U}}$ is closed under extensions and cocones if and only if $\mathcal{\underline{U}}$ is closed under extensions and $[-1]$. It follows that $\mathcal{\underline{U}}$ is closed under $[-1]$. Hence
$(\mathcal{\underline{U}}, \mathcal{\underline{V}})$ is a co-t-structure on $\underline{\C}.$

``$\Leftarrow$". Let $\xymatrix@C=0.6cm{X\ar[r]^{}&U_{1}\ar[r]^{}&U_{2}\ar@{-->}[r]^{}&}$ be
an $\E$-triangle with $U_{1}$ and $U_{2}$ in $\mathcal{U}$.
Then $$\xymatrix@C=0.6cm{U_{2}[-1]\ar[r]&X\ar[r]^{}&U_{1}\ar[r]^{}&U_{2}&}$$
is a distinguished triangle in $\underline{\C}$.
Since $(\mathcal{\underline{U}}, \mathcal{\underline{V}})$ is a co-t-structure on $\underline{\C}$, one has $U_{2}[-1]\in \underline{\mathcal{U}}$. Thus $X\in \mathcal{\underline{U}}$,
and hence there exists $Y\in \mathcal{U}$ such that $X\oplus P\cong Y\oplus Q$, where $P$ and $Q$ are in $Proj(\C)$. It follows that $X\in \mathcal{U}$. So $(\mathcal{U}, \mathcal{V})$ is a hereditary cotorsion pair on $\C$ by Lemmas \ref{lem10} and \ref{lem14}.
\end{proof}

\begin{prop}\label{prop6}Let $\C$ be a Frobenius extriangulated category. Suppose that $\mathcal{U}$ and $\mathcal{V}$ are two subcategories of $\C$ such that $Proj(\C)\subseteq \mathcal{U}\bigcap\mathcal{V}$. Then $(\mathcal{U}, \mathcal{V})$ is a hereditary cotorsion pair on $\C$ with $\widehat{\mathcal{U}}=\widecheck{\mathcal{V}}=\C$ if and only if $(\mathcal{\underline{U}}, \mathcal{\underline{V}})$ is a bounded co-t-structure on $\underline{\C}.$
\end{prop}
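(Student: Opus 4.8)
The plan is to build directly on Proposition \ref{prop5}, which already establishes that $(\mathcal{U},\mathcal{V})$ is a hereditary cotorsion pair on $\C$ if and only if $(\underline{\mathcal{U}},\underline{\mathcal{V}})$ is a co-t-structure on $\underline{\C}$. Under this correspondence one has $\mathcal{A}=\underline{\mathcal{U}}$ and $\mathcal{B}=\underline{\mathcal{V}}$, so the only thing left to prove is that the two boundedness conditions $\bigcup_{n\in\mathbb{Z}}\underline{\mathcal{U}}[n]=\underline{\C}$ and $\bigcup_{n\in\mathbb{Z}}\underline{\mathcal{V}}[n]=\underline{\C}$ are respectively equivalent to $\widehat{\mathcal{U}}=\C$ and $\widecheck{\mathcal{V}}=\C$. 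Since these two assertions are formally dual (passing to $\C^{\mathrm{op}}$ interchanges $\mathcal{U}$ with $\mathcal{V}$, $Proj(\C)$ with $Inj(\C)$, resolving with coresolving, syzygies $\Omega$ with cosyzygies $\Sigma$, and $\widehat{(-)}$ with $\widecheck{(-)}$), I would prove the statement for $\mathcal{U}$ in detail and obtain the one for $\mathcal{V}$ by duality.

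First I would record two bookkeeping facts. Since $[1]=\mathbb{G}=\Sigma$, we have $[-1]=\Omega$, so for $n\geq 0$ an object $C$ lies in $\underline{\mathcal{U}}[n]$ exactly when $\Omega^{n}C\in\underline{\mathcal{U}}$. Using $Proj(\C)=Inj(\C)\subseteq\mathcal{U}$ together with Lemma \ref{lem12}(2) and closure of $\mathcal{U}$ under summands, I would check that an object $Z$ of $\C$ lies in $\underline{\mathcal{U}}$ if and only if $Z\in\mathcal{U}$ (an isomorphism $Z\oplus I\cong U\oplus Q$ with $I,Q$ injective forces $Z$ to be a summand of an object of $\mathcal{U}$). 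Moreover, because $(\mathcal{U},\mathcal{V})$ is hereditary, $\mathcal{U}$ is resolving (Lemma \ref{lem10}), i.e. $\underline{\mathcal{U}}[-1]\subseteq\underline{\mathcal{U}}$, so the negative shifts are redundant and $\bigcup_{n\in\mathbb{Z}}\underline{\mathcal{U}}[n]=\bigcup_{n\geq 0}\underline{\mathcal{U}}[n]$. Combining these, $\bigcup_{n\in\mathbb{Z}}\underline{\mathcal{U}}[n]=\underline{\C}$ is equivalent to the statement that every $C\in\C$ has some syzygy $\Omega^{m}C\in\mathcal{U}$, and it then suffices to prove that this condition is equivalent to $\widehat{\mathcal{U}}=\C$.

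One implication is easy: if $\Omega^{m}C\in\mathcal{U}$, then splicing the projective $\E$-triangles $\Omega^{k+1}C\to P_{k}\to\Omega^{k}C$ (with $P_{k}\in Proj(\C)\subseteq\mathcal{U}$) produces a finite $\mathcal{U}$-resolution $\Omega^{m}C\to P_{m-1}\to\cdots\to P_{0}\to C$, so $C\in\widehat{\mathcal{U}}$. For the converse I would induct on the length $n$ of a resolution $X_{n}\to\cdots\to X_{0}\to C$ with $X_{i}\in\mathcal{U}$, aiming at $\Omega^{n}C\in\mathcal{U}$. Writing the first $\E$-triangle as $K_{1}\to X_{0}\to C$ (so $K_{1}\in\widehat{\mathcal{U}_{n-1}}$ and, by induction, $\Omega^{n-1}K_{1}\in\mathcal{U}$), I would rotate it in $\underline{\C}$ and apply $\Omega^{n-1}$ to obtain a triangle
$$\xymatrix@C=0.6cm{\Omega^{n}C\ar[r]&\Omega^{n-1}K_{1}\ar[r]&\Omega^{n-1}X_{0}\ar[r]&\Omega^{n-1}C}$$
whose middle terms lie in $\underline{\mathcal{U}}$ (indeed $\Omega^{n-1}X_{0}\in\mathcal{U}$ since $\mathcal{U}$ is resolving). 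As $\underline{\mathcal{U}}$ is closed under extensions and under $[-1]$, it is closed under cocones, whence $\Omega^{n}C\in\underline{\mathcal{U}}$, that is $\Omega^{n}C\in\mathcal{U}$. This gives $\widehat{\mathcal{U}}=\C\Leftrightarrow\bigcup_{n}\underline{\mathcal{U}}[n]=\underline{\C}$; the dual argument yields $\widecheck{\mathcal{V}}=\C\Leftrightarrow\bigcup_{n}\underline{\mathcal{V}}[n]=\underline{\C}$, and together with Proposition \ref{prop5} this proves the proposition.

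I expect the main obstacle to be precisely this inductive converse: the clean translation between the $\E$-triangle resolution living in $\C$ and the triangulated manipulations (rotation and the functor $\Omega^{n-1}$) in $\underline{\C}$, together with the repeated appeal to Lemma \ref{lem12} needed to pass between objects of $\mathcal{U}$ and their images in $\underline{\mathcal{U}}$ while absorbing injective summands. The reduction $\bigcup_{n\in\mathbb{Z}}=\bigcup_{n\geq 0}$ and the implication ``closed under extensions and $[-1]$ $\Rightarrow$ closed under cocones'' must also be invoked carefully, exactly as in the proof of Proposition \ref{prop5}.
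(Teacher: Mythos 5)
Your proposal is correct, but it handles the boundedness part of the statement by a genuinely different route than the paper. Both proofs start from Proposition \ref{prop5}, so the whole issue is to match $\widehat{\mathcal{U}}=\widecheck{\mathcal{V}}=\C$ with boundedness of the co-t-structure. The paper does this by citing \cite[Remark 4.2]{MSSS2}, which identifies boundedness with $\widehat{(\underline{\mathcal{U}})}=\widecheck{(\underline{\mathcal{V}})}=\underline{\C}$ computed inside the triangulated category $\underline{\C}$, and then performs an induction on $\mathrm{coresdim}_{\underline{\mathcal{V}}}(C)$ to lift a finite $\underline{\mathcal{V}}$-coresolution in $\underline{\C}$ to an $\E$-triangle coresolution in $\C$: each abstract triangle is replaced by an isomorphic standard triangle, and Lemma \ref{lem12} plus the direct-summand closure of $\widecheck{\mathcal{V}}$ (extracted from the proof of Proposition \ref{prop4}) absorb the projective summands. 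You instead bypass the citation entirely: using $[-1]=\Omega$, the identification of $\underline{\mathcal{U}}$-membership with $\mathcal{U}$-membership, and the resolving property (Lemma \ref{lem10}) to discard negative shifts, you rewrite boundedness as ``every object has some syzygy in $\mathcal{U}$,'' and then prove this equivalent to $\widehat{\mathcal{U}}=\C$ by splicing projective presentations in one direction and by a rotation-and-shift induction in the other; closure of $\underline{\mathcal{U}}$ under extensions, which your cocone step needs, is available from Lemma \ref{lem14} (or from the co-t-structure itself), just as in the proof of Proposition \ref{prop5}. The trade-off: your argument is self-contained (it effectively in-lines the relevant case of the cited remark of \cite{MSSS2}) and needs only $\mathcal{U}$, not $\widehat{\mathcal{U}}$ or $\widecheck{\mathcal{V}}$, to be closed under direct summands, while the paper's proof is shorter modulo the reference; note also that the inductive work lands on opposite implications --- the paper's induction proves ``bounded $\Rightarrow$ finite (co)resolutions in $\C$,'' whereas yours proves ``finite resolutions in $\C$ $\Rightarrow$ bounded,'' the reverse implications being the easy splicing/transfer steps in the respective setups.
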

\begin{proof}
By Proposition \ref{prop5}, one has that $(\mathcal{U}, \mathcal{V})$ is a hereditary cotorsion pair on $\C$ if and only if $(\mathcal{\underline{U}}, \mathcal{\underline{V}})$ is a co-t-structure on $\underline{\C}.$
 On the one hand, if $(\mathcal{U}, \mathcal{V})$ is a hereditary cotorsion pair on $\C$ with $\widehat{\mathcal{U}}=\widecheck{\mathcal{V}}=\C$, then $(\mathcal{\underline{U}}, \mathcal{\underline{V}})$ is a co-t-structure on $\underline{\C}$ with
$\widehat{(\underline{\mathcal{U}})}=\widecheck{(\underline{\mathcal{V}})}=\underline{\C}$. Hence $(\mathcal{\underline{U}}, \mathcal{\underline{V}})$ is a bounded co-t-structure on $\underline{\C}$ by \cite[Remark 4.2]{MSSS2}.
 On the other hand, if $(\mathcal{\underline{U}}, \mathcal{\underline{V}})$ is a bounded co-t-structure on $\underline{\C}$, then $\widehat{(\underline{\mathcal{U}})}=\widecheck{(\underline{\mathcal{V}})}=\underline{\C}$ by \cite[Remark 4.2]{MSSS2}. We claim that $\widehat{\mathcal{U}}=\widecheck{\mathcal{V}}=\C$.

 Take $C\in \C$. We proceed by induction on $n=\mathrm{coresdim}_{\underline{\mathcal{V}}}(C)$. If $n=0$, then $C\in\mathcal{V}$ by Lemma \ref{lem12}. Suppose $n>0$. There is a triangle
 $\xymatrix@C=0.6cm{C\ar[r]^{\underline{a}} & V \ar[r]^{\underline{b}} & X\ar[r]^{\underline{c}} & C[1]}$
 with $V\in \mathcal{V}$ and $\mathrm{coresdim}_{\underline{\mathcal{V}}}(X)=n-1$ in $\underline{\C}$.
 Therefore, we have a standard triangle
  $\xymatrix@C=0.6cm{C^{'}\ar[r]^{\underline{a}^{'}} & V^{'} \ar[r]^{\underline{b}^{'}} & X^{'}\ar[r]^{\underline{c}^{'}} & C^{'}[1]}$
 which is isomorphic to the above triangle.
  By the induction hypothesis, $X\in\w{\mathcal{V}}$. Since there is an isomorphism $X\oplus P\cong X^{'}\oplus P^{'}$ with $P,P^{'}\in Proj(\C)$ by Lemma \ref{lem12}, $X^{'}$ is in $\w{\mathcal{V}}$ by the proof that $\w{\mathcal{V}}$ is closed under direct summands in the proof of Proposition \ref{prop4}. Hence $C^{'}$ is in $\w{\mathcal{V}}$. Similarly, we have $C\in \w{\mathcal{V}}$. So $\w{\mathcal{V}}=\C$. Dually, we can show that $\widehat{\mathcal{U}}=\C$. This completes the proof.
\end{proof}

In the following, given a subcategory $\mathcal{M}$ of $\C$ with $Proj(\C)\subseteq\mathcal{M}$, the stable category of $\mathcal{M}$ respect to the subcategory $Proj(\C)$ will be denoted by $\underline{\mathcal{M}}$. Note that $\underline{\mathcal{M}}\subseteq\underline{\C}$, and we let
 $$\underline{\mathcal{M}}^{\bot}=\{N\in \underline{\C}\mid \underline{\C}(\underline{\mathcal{M}},N[i])=0 \ \text{for all~} i>0 \}$$
and
$$ ^{\bot}\underline{\mathcal{M}}=\{N\in \underline{\C}\mid \underline{\C}(N,\underline{\mathcal{M}}[i])=0\ \text{for all~} i>0 \}.$$

We are now in a position to prove Theorem \ref{thm5} from the introduction.

{\bf Proof of Theorem \ref{thm5}.} Assume that $\mathcal{H}$ is a subcategory satisfying the condition in $(2)$. Then one has $Proj(\C)\subseteq \mathcal{H}\bigcap {^{\bot}\mathcal{H}}$. It follows from Theorem \ref{thm4} that $(^{\perp}\mathcal{H}, \mathcal{H})$ is a hereditary cotorsion pair on $\C$ with $\widehat{(^{\perp}\mathcal{H})}=\widecheck{\mathcal{H}}=\C$. Since $\underline{\mathcal{H}}\bigcap\underline{^{\bot}\mathcal{H}}
=\underline{\mathcal{H}\bigcap{^{\bot}\mathcal{H}}}$ by Lemma \ref{lem14}, we get that $\underline{\mathcal{H}\bigcap{^{\bot}\mathcal{H}}}$ is a silting subcategory of $\underline{\C}$ with $Proj(\C)\subseteq \mathcal{H}\bigcap {^{\bot}\mathcal{H}}$ by Proposition \ref{prop6} and Theorem \ref{thm:4.10}. So $\Phi$ is well defined.

Let $\underline{\mathcal{M}}$  be a  silting subcategory of $\underline{\C}$ with $Proj(\C)\subseteq \mathcal{M}$.
 Then $(^{\bot}(\underline{\mathcal{M}}^{\bot}),\underline{\mathcal{M}}^{\bot})$ is a bounded co-t-structure on $\underline{\C}$ by Theorem \ref{thm:4.10}.
 We claim that $(^{\bot}(\underline{\mathcal{M}}^{\bot}),\underline{\mathcal{M}}^{\bot})=
 (\underline{^{\bot}(\mathcal{M}^{\bot})},\underline{\mathcal{M}^{\bot}}).$
Indeed, by Lemma \ref{lem13}, one has $\underline{\mathcal{M}}^{\bot}\subseteq \underline{\mathcal{M}^{\bot}}$.
 If $X\in \underline{\mathcal{M}^{\bot}}$, then there exists an object $Y$ in $\mathcal{M}^{\bot}$ with  $X\oplus P\cong Y\oplus Q$ by Lemma \ref{lem12}, where $P$ and $Q$ are in $Proj(\C)$.
 Hence $$\underline{\C}(\underline{\mathcal{M}},X[i])=\underline{\C}(\underline{\mathcal{M}},(X\oplus P)[i])=\underline{\C}(\underline{\mathcal{M}},(Y\oplus Q)[i])=\E^{i}(\mathcal{M},Y\oplus Q)=0,$$ where the third equality is due to Lemma \ref{lem13}. Thus $X\in {\underline{\mathcal{M}}^{\bot}}$.
 Next, we show that $^{\bot}(\underline{\mathcal{M}}^{\bot})=\underline{^{\bot}(\mathcal{M}^{\bot})}$.
 For $C\in \underline{^{\bot}(\mathcal{M}^{\bot})}$, there exists an object $Y$ in $^{\bot}(\mathcal{M}^{\bot})$ such that $C\oplus P_{1}\cong Y\oplus Q_{1}$ by Lemma \ref{lem12}, where $P_1$ and $Q_1$ are in $Proj(\C)$.
  Hence one has
  $$\underline{\C}(C,\underline{\mathcal{M}}^{\bot}[i])=
  \underline{\C}(C,\underline{\mathcal{M}^{\bot}}[i])=
  \underline{\C}(C\oplus P_{1},\underline{\mathcal{M}^{\bot}}[i])=
  \underline{\C}(Y\oplus Q_{1},\underline{\mathcal{M}^{\bot}}[i])=
  \E^{i}(Y\oplus Q_{1},\mathcal{M}^{\bot})=0$$ for all $i\geq 1$.
  Thus $\underline{^{\bot}(\mathcal{M}^{\bot})}\subseteq{^{\bot}(\underline{\mathcal{M}}^{\bot})}$.
  If $Y\in{^{\bot}(\underline{\mathcal{M}}^{\bot})}$, one has $$\E^{i}(Y,\mathcal{M}^{\bot})=\underline{\C}(Y,\underline{\mathcal{M}^{\bot}}[i])
 =\underline{\C}(Y,\underline{\mathcal{M}}^{\bot}[i])=0.$$
 Hence $Y\in \underline{^{\bot}(\mathcal{M}^{\bot})}$.
 So $^{\bot}(\underline{\mathcal{M}}^{\bot})=\underline{^{\bot}(\mathcal{M}^{\bot})}$.
 So ${\mathcal{M}^{\bot}}$ satisfies the condition of $(2)$ by Theorem \ref{thm4} and Proposition \ref{prop6}.

  Let $\mathcal{M}_{1}$ and $\mathcal{M}_{2}$ be two subcategories of $\C$ with  $Proj(\C)\subseteq \mathcal{M}_{1}\bigcap \mathcal{M}_{2}$.
  Assume that $\underline{\mathcal{M}_{1}}=\underline{\mathcal{M}_{2}}$.
  Indeed, for any $X\in\mathcal{M}_{1}$, we have $X\oplus P\cong Y\oplus Q$ by Lemma \ref{lem11}, where $Y$ is in $\mathcal{M}_{2}$, $P$ and $Q$ are in $Proj(\C)$. Thus $X^{\perp}= Y^{\perp}$ in $\C$, and hence ${\mathcal{M}_{1}}^{\perp}={\mathcal{M}_{2}}^{\perp}$ in $\C$.
  So $\Psi$ is well defined by the above argument.

  Let $\underline{\mathcal{M}}$  be a  silting subcategory of $\underline{\C}$ with $Proj(\C)\subseteq \mathcal{M}$. Then $\Phi\Psi(\underline{\mathcal{M}})=\Phi(\mathcal{M}^{\bot})=\underline{{\mathcal{M}^{\bot}}
  \bigcap {^{\bot}(\mathcal{M}^{\bot})}}$. By the above argument,
  $(^{\bot}(\underline{\mathcal{M}}^{\bot}),\underline{\mathcal{M}}^{\bot})=
 (\underline{^{\bot}(\mathcal{M}^{\bot})},\underline{\mathcal{M}^{\bot}})$ is a bounded co-t-structure.
 Hence, one has $$\underline{{\mathcal{M}^{\bot}}
  \bigcap {^{\bot}(\mathcal{M}^{\bot})}}=\underline{{\mathcal{M}^{\bot}}}
  \bigcap \underline{{^{\bot}(\mathcal{M}^{\bot})}}
  =\underline{\mathcal{M}}^{\bot}\bigcap(^{\bot}(\underline{\mathcal{M}}^{\bot})=\underline{\mathcal{M}},$$ where the first equality is due to Lemma \ref{lem14}, and the third equality is due to Theorem \ref{thm:4.10}. Thus $\Phi\Psi$ is an identity.  Assume that $\mathcal{H}$ is a subcategory satisfying the condition in $(2)$. Then $\Psi\Phi(\mathcal{H})=\Psi(\underline{\mathcal{H}\bigcap{^{\bot}\mathcal{H}}})
  ={(\mathcal{H}\bigcap{^{\bot}{\mathcal{H}})^{\bot}}}$.
Note that $(\w{\mathcal{M}},\mathcal{H})$ is a hereditary cotorsion pair with $\mathcal{M}=\mathcal{H}\bigcap{^{\bot}\mathcal{H}}$ by Theorem \ref{thm4}. It is easy to check that ${\mathcal{M}^{\bot}}={(\w{\mathcal{M}})^{\bot}}$. Thus ${(\mathcal{H}\bigcap{^{\bot}\mathcal{H}})^{\bot}}={\mathcal{M}^{\bot}}={(\w{\mathcal{M}})^{\bot}}
  =\mathcal{H}$, and hence $\Psi\Phi$ is an identity.
  This completes the proof.   \hfill$\Box$

Using Theorem \ref{thm5}, we have the following Auslander-Reiten type correspondence for silting subcategories over a triangulated category.
\begin{cor}\cite[Corollary 3.7]{DLWW} \label{cor:4.13}
Let $\mathcal{T}$ be a triangulated category. The assignments
\begin{center}
 $\mathcal{M}\mapsto$~$\mathcal{M}^{\bot}$~$~~$ and $~~$~$\mathcal{H}\mapsto\mathcal{H}\bigcap{^{\bot}\mathcal{H}}$
\end{center}
  give mutually inverse bijections between the following classes: 
\begin{enumerate}

\item Silting subcategories $\mathcal{M}$ of $\mathcal{T}$.
\item Subcategories $\mathcal{H}$ of $\mathcal{T}$, which are specially covariantly finite and coresolving in $\C$ such that $\w{\mathcal{H}}=\C$ and for any $H\in \mathcal{H}$, there exists a positive integer $t\geq 1$ making $\mathrm{Hom}_{\mathcal{T}}(H,\mathcal{H}[i])=0$ for all $i\geq t$.
\end{enumerate}
\end{cor}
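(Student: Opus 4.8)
The plan is to obtain this corollary as the special case $\C=\mathcal{T}$ of Theorem \ref{thm5}. The first step is to record that any triangulated category $\mathcal{T}$, viewed as an extriangulated category whose conflations are the distinguished triangles, is Frobenius (see \cite[Example 7.2(1) and Corollary 7.6]{NP}). The decisive feature is that $Proj(\mathcal{T})$ and $Inj(\mathcal{T})$ consist only of zero objects: from $\E(P,-)\cong\mathrm{Hom}_{\mathcal{T}}(P,(-)[1])=0$ one gets $\mathrm{Hom}_{\mathcal{T}}(P,-)=0$ and hence $P\cong 0$, and dually for injectives. Enough projectives and injectives hold because each $C$ fits into the triangle $C[-1]\to 0\to C\to C$, so that $0\to C$ is a deflation and $C\to 0$ an inflation.

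The second step is to identify the stable category with $\mathcal{T}$ itself. Since $Proj(\mathcal{T})$ consists of zero objects, the passage to $\underline{\mathcal{T}}$ is trivial: $\underline{\mathcal{T}}=\mathcal{T}$ and $\underline{\mathcal{M}}=\mathcal{M}$ for every subcategory $\mathcal{M}$. In particular the side condition $Proj(\C)\subseteq\mathcal{M}$ in Theorem \ref{thm5}(1) is automatic, because $0\in\mathcal{M}$, so the silting subcategories $\underline{\mathcal{M}}$ of $\underline{\mathcal{T}}$ are exactly the silting subcategories $\mathcal{M}$ of $\mathcal{T}$ in the sense of \cite{AI}. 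By Lemma \ref{lem13}, for all $i\geq 1$ one has $\E^{i}(A,B)\cong\underline{\mathcal{T}}(A,B[i])=\mathrm{Hom}_{\mathcal{T}}(A,B[i])$; hence the perpendicular $\mathcal{M}^{\bot}$ defined through the groups $\E^{i}$ coincides with $\{Y\in\mathcal{T}\mid\mathrm{Hom}_{\mathcal{T}}(\mathcal{M},Y[i])=0\text{ for all }i\geq 1\}$, and likewise for $^{\bot}\mathcal{H}$. Under these identifications the maps $\Psi$ and $\Phi$ become precisely $\mathcal{M}\mapsto\mathcal{M}^{\bot}$ and $\mathcal{H}\mapsto\mathcal{H}\cap{^{\bot}\mathcal{H}}$, and the condition $\E^{i}(H,\mathcal{H})=0$ for $i\geq t$ reads $\mathrm{Hom}_{\mathcal{T}}(H,\mathcal{H}[i])=0$ for $i\geq t$.

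It remains to check that the structural conditions in Theorem \ref{thm5}(2) unwind into their triangulated counterparts. As every morphism of $\mathcal{T}$ is simultaneously an inflation and a deflation, being coresolving (contain $Inj(\mathcal{T})=0$, closed under extensions and under cones of inflations) and being specially covariantly finite translate verbatim into the notions used in \cite{DLWW}, and $\w{\mathcal{H}}=\mathcal{T}$ is the expected resolution condition. With all of this in place the corollary is literally Theorem \ref{thm5} applied to $\mathcal{C}=\mathcal{T}$, so that the two assignments give mutually inverse bijections. I do not expect a genuine obstacle here; the only real work is bookkeeping---confirming that each extriangulated primitive (conflation, inflation, cone, $\w{(-)}$, and the two perpendiculars) specializes to the triangulated notion of \cite{DLWW}, after which the bijectivity is inherited unchanged.
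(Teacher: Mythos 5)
Your proposal is correct and matches the paper's own treatment: the paper derives Corollary \ref{cor:4.13} exactly by specializing Theorem \ref{thm5} to $\C=\mathcal{T}$, using that a triangulated category is a Frobenius extriangulated category with $Proj(\mathcal{T})=Inj(\mathcal{T})=\{0\}$, so that $\underline{\mathcal{T}}=\mathcal{T}$, the condition $Proj(\C)\subseteq\mathcal{M}$ is vacuous, and $\E^{i}(A,B)\cong\mathrm{Hom}_{\mathcal{T}}(A,B[i])$ identifies the perpendiculars and the vanishing conditions. Your bookkeeping of how each extriangulated notion unwinds (inflations, cones, $\w{(-)}$, coresolving, specially covariantly finite) is exactly the implicit content of the paper's one-line deduction.
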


\begin{remark}
Let $\mathcal{T}$ be a triangulated category. Then resolving subcategories are often called cosuspended or desuspended and coresolving subcategories are often called suspended {\rm(}see \cite{MSSS1}{\rm)}.
\end{remark}

%
%
%

In what follows, let $\mathcal{A}$ be an abelian category with enough projective objects.
We denote by $C(\mathcal{A})$ the category of complexes  and by $K(\mathcal{A})$ the homotopy category over $\mathcal{A}$.
Let $\mathcal{S}$ be the class of short exact sequences which split in each degree. Then $(C(\mathcal{A}),\mathcal{S})$ is a Frobenius exact category whose $\mathcal{S}$-projective objects are precisely the contractible complexes and $\underline{C(\mathcal{A})}=K(\mathcal{A})$ by \cite[Section 13.4]{B}.
Hence we get the following characterization for silting subcategories of $K(\mathcal{A})$.

\begin{cor}Consider the above Frobenius exact category $C(\mathcal{A})=(C(\mathcal{A}),\mathcal{S})$ and its stable category $K(\mathcal{A})=\underline{C(\mathcal{A})}$. The assignments
\begin{center}
 $\underline{\mathcal{M}}\mapsto$~${\mathcal{M}^{\bot}}$~$~~$ and $~~$~$\mathcal{H}\mapsto\underline{\mathcal{H}\bigcap{^{\bot}\mathcal{H}}}$
\end{center}
  give mutually inverse bijections between the following classes: 
\begin{enumerate}
\item Silting subcategories $\underline{\mathcal{M}}$ of $K(\mathcal{A})$ with $\mathcal{M}$ containing all contractible complexes.

\item Subcategories $\mathcal{H}$ of the Frobenius exact category $C(\mathcal{A})$, which are specially covariantly finite and coresolving in $C(\mathcal{A})$ such that $\w{\mathcal{H}}=C(\mathcal{A})$ and for any $H\in \mathcal{H}$, there exists a positive integer $t\geq 1$ making $\mathrm{Ext}_{C(\mathcal{A})}^{i}(H,\mathcal{H})=0$ for all $i\geq t$.

\end{enumerate}
\end{cor}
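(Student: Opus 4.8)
The plan is to obtain this corollary as an immediate specialization of Corollary \ref{cor:1.2} to the Frobenius exact category $\C = C(\mathcal{A})$. First I would invoke the facts recalled in the paragraph preceding the statement: by \cite[Section 13.4]{B}, the pair $(C(\mathcal{A}),\mathcal{S})$, where $\mathcal{S}$ consists of the degreewise split short exact sequences, is a Frobenius exact category whose projective objects are precisely the contractible complexes, and whose stable category is $\underline{C(\mathcal{A})}=K(\mathcal{A})$. Since the category is Frobenius, $Proj(C(\mathcal{A}))=Inj(C(\mathcal{A}))$, and this common class is exactly the contractible complexes.

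Next I would apply Corollary \ref{cor:1.2} with $\C=C(\mathcal{A})$ and translate each piece of data. Corollary \ref{cor:1.2} furnishes mutually inverse bijections $\underline{\mathcal{M}}\mapsto \mathcal{M}^{\bot}$ and $\mathcal{H}\mapsto \underline{\mathcal{H}\bigcap {^{\bot}\mathcal{H}}}$ between silting subcategories $\underline{\mathcal{M}}$ of $\underline{\C}$ with $Proj(\C)\subseteq\mathcal{M}$ and subcategories $\mathcal{H}$ of $\C$ that are specially covariantly finite and coresolving with $\w{\mathcal{H}}=\C$ and the stated $\mathrm{Ext}$-finiteness. Substituting the identifications above, $\underline{\C}$ becomes $K(\mathcal{A})$, the requirement $Proj(\C)\subseteq\mathcal{M}$ becomes the requirement that $\mathcal{M}$ contain all contractible complexes, the condition $\w{\mathcal{H}}=\C$ becomes $\w{\mathcal{H}}=C(\mathcal{A})$, and the groups $\mathrm{Ext}^{i}_{\C}$ become $\mathrm{Ext}^{i}_{C(\mathcal{A})}$. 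This reproduces verbatim the two classes in the statement together with the asserted bijections, and the bijectivity is inherited directly from Corollary \ref{cor:1.2}.

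Because the entire argument is a relabelling of an already-proved corollary, I do not expect a genuine obstacle. The only step requiring care is the correct identification, from \cite[Section 13.4]{B}, of the projective-injective objects of $(C(\mathcal{A}),\mathcal{S})$ as exactly the contractible complexes; this is what makes the summand hypothesis $Proj(\C)\subseteq\mathcal{M}$ faithfully correspond to ``$\mathcal{M}$ contains all contractible complexes.'' Once this dictionary is fixed, every remaining clause transcribes mechanically.
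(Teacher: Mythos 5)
Your proposal is correct and matches the paper's own treatment: the paper derives this corollary exactly by citing \cite[Section 13.4]{B} for the identification of $(C(\mathcal{A}),\mathcal{S})$ as a Frobenius exact category whose projective-injective objects are the contractible complexes and whose stable category is $K(\mathcal{A})$, and then specializing Corollary \ref{cor:1.2} verbatim. The translation dictionary you describe (contractible complexes for $Proj(\C)$, $\mathrm{Ext}^{i}_{C(\mathcal{A})}$ taken relative to the degreewise split exact structure) is precisely what the paper intends.
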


Let $\mathcal{A}$ be a Grothendieck category with enough projectives and $\mathcal{P}$ the subcategory of projective objects in $\mathcal{A}$. Recall from \cite{S1988} that a complex $P$ over $\mathcal{A}$ is called \emph{$K$-projective} if $\mathrm{Hom}_{K(\mathcal{A})}(P,E)=0$ for any exact complex $E$ over $\mathcal{A}$.
We denote by $C_{kproj}(\mathcal{A})$ the category of $K$-projective complexes over $\mathcal{A}$ and by $K_{kproj}(\mathcal{A})$ the category of complexes obtained from $C_{kproj}(\mathcal{A})$ by factoring out null-homotopic chain maps. Furthermore, we set $C_{dgproj}(\mathcal{A})=C_{kproj}(\mathcal{A})\bigcap C(\mathcal{P})$, where $C(\mathcal{P})$ is the category of complexes over $\mathcal{P}$. Note that every complex $X$ admits a quasi-isomorphism $f:P\rightarrow X$ with $P\in{C_{dgproj}(\mathcal{A})}$ by \cite[Corollary 3.5]{S1988}. It follows that $K_{kproj}(\mathcal{A})=D(\mathcal{A})$, where $D(\mathcal{A})$ is the unbounded derived category.  Note that $C_{dgproj}(\mathcal{A})$ is an extension-closed subcategory of $C(\mathcal{A})$.
Let $\mathcal{S}'$ be the class of short exact sequences whose all terms are in $C_{dgproj}(\mathcal{A})$.
Then $(C_{dgproj}(\mathcal{A}),\mathcal{S}')$ is a Frobenius exact category and $\underline{C_{dgproj}(\mathcal{A})}=K_{kproj}(\mathcal{A})$.
Thus $D(\mathcal{A})=\underline{C_{dgproj}(\mathcal{A})}$.
Hence we get the following characterization for silting subcategories of $D(\mathcal{A})$.
\begin{cor} Consider the above Frobenius exact category $(C_{dgproj}(\mathcal{A}),\mathcal{S}')$ and its stable category $D(\mathcal{A})=\underline{C_{dgproj}(\mathcal{A})}$.
 The assignments
\begin{center}
 $\underline{\mathcal{M}}\mapsto$~${\mathcal{M}^{\bot}}$~$~~$ and $~~$~$\mathcal{H}\mapsto\underline{\mathcal{H}\bigcap{^{\bot}\mathcal{H}}}$
\end{center}
  give mutually inverse bijections between the following classes: 

\begin{enumerate}
 \item Silting subcategories $\underline{\mathcal{M}}$ of $D(\mathcal{A})$ with $\mathcal{M}$ containing all contractible complexes whose all terms are in $\mathcal{P}$.

\item Subcategories $\mathcal{H}$ of $C_{dgproj}(\mathcal{A})$, which are specially covariantly finite and coresolving in $C_{dgproj}(\mathcal{A})$ such that $\w{\mathcal{H}}=C_{dgproj}(\mathcal{A})$ and for any $H\in \mathcal{H}$, there exists a positive integer $t\geq 1$ making $\mathrm{Ext}_{C(\mathcal{A})}^{i}(H,\mathcal{H})=0$ for all $i\geq t$.
 \end{enumerate}
\end{cor}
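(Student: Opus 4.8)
The plan is to deduce the statement directly from Corollary \ref{cor:1.2}, applied to the Frobenius exact category $(C_{dgproj}(\mathcal{A}),\mathcal{S}')$ whose stable category is $D(\mathcal{A})$, as recorded in the paragraph preceding the statement. The structural facts that $(C_{dgproj}(\mathcal{A}),\mathcal{S}')$ is a Frobenius exact category and that $\underline{C_{dgproj}(\mathcal{A})}=K_{kproj}(\mathcal{A})=D(\mathcal{A})$ are already available, so the only points requiring verification are the precise description of the projective-injective objects of this Frobenius exact category and the comparison of the relevant Ext groups. Once these are settled, the two bijections of Corollary \ref{cor:1.2} transcribe verbatim.

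First I would pin down the exact structure. Since every object of $C_{dgproj}(\mathcal{A})=C_{kproj}(\mathcal{A})\cap C(\mathcal{P})$ has projective terms, any conflation $0\to X\to Y\to Z\to 0$ in $\mathcal{S}'$ is degreewise split, because each $Z^{n}$ lies in $\mathcal{P}$; thus $\mathcal{S}'$ coincides with the class of degreewise split short exact sequences of $K$-projective complexes with projective terms. From this I would read off that the projective-injective objects of $(C_{dgproj}(\mathcal{A}),\mathcal{S}')$ are exactly the contractible complexes all of whose terms lie in $\mathcal{P}$: such a complex is $K$-projective and degreewise projective, hence belongs to $C_{dgproj}(\mathcal{A})$, and the usual argument for the degreewise split structure identifies the objects with vanishing $\mathrm{Ext}^{1}$ on both sides as precisely the contractible ones. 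Consequently the hypothesis $Proj(\C)\subseteq\mathcal{M}$ of Corollary \ref{cor:1.2} becomes the condition that $\mathcal{M}$ contain all contractible complexes with terms in $\mathcal{P}$, matching item (1).

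Next I would justify replacing $\mathrm{Ext}^{i}_{C_{dgproj}(\mathcal{A})}$ by $\mathrm{Ext}^{i}_{C(\mathcal{A})}$ in item (2). The key observation is that $C_{dgproj}(\mathcal{A})$ is extension-closed in $C(\mathcal{A})$ and that an extension of two objects of $C_{dgproj}(\mathcal{A})$ formed inside $C(\mathcal{A})$ is automatically degreewise split, has projective terms, and, since $K$-projective complexes are closed under extensions, is again $K$-projective; hence it lands back in $C_{dgproj}(\mathcal{A})$. Therefore the Yoneda Ext groups computed in the exact category $(C_{dgproj}(\mathcal{A}),\mathcal{S}')$ agree with those computed in the ambient abelian category $C(\mathcal{A})$, for all $i\geq 1$ and all arguments in $C_{dgproj}(\mathcal{A})$. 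This lets me translate the vanishing condition $\mathrm{Ext}^{i}_{C_{dgproj}(\mathcal{A})}(H,\mathcal{H})=0$ supplied by Corollary \ref{cor:1.2} into the stated $\mathrm{Ext}^{i}_{C(\mathcal{A})}(H,\mathcal{H})=0$.

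With the exact structure, the projective-injectives, and the Ext comparison in hand, Corollary \ref{cor:1.2} applied to $\C=(C_{dgproj}(\mathcal{A}),\mathcal{S}')$ yields the two mutually inverse bijections exactly as stated, using $\underline{C_{dgproj}(\mathcal{A})}=D(\mathcal{A})$. I expect the main obstacle to be the Ext comparison, namely checking carefully that the \emph{higher} Yoneda Ext groups (not merely $\mathrm{Ext}^{1}$) computed in the exact subcategory coincide with those computed in $C(\mathcal{A})$; this rests on the stability of $C_{dgproj}(\mathcal{A})$ under the relevant extensions together with the identification of the projective-injective objects established above.
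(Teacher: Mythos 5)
Your proposal is correct and takes essentially the same route as the paper: the paper obtains this corollary by directly applying Corollary \ref{cor:1.2} to the Frobenius exact category $(C_{dgproj}(\mathcal{A}),\mathcal{S}')$ with stable category $D(\mathcal{A})$, which is exactly your plan. The two verifications you single out --- that the projective-injective objects are precisely the contractible complexes with terms in $\mathcal{P}$, and that $\mathrm{Ext}^{i}$ relative to $\mathcal{S}'$ agrees with $\mathrm{Ext}^{i}_{C(\mathcal{A})}$ for arguments in $C_{dgproj}(\mathcal{A})$ --- are left implicit in the paper, and your outline of them (degreewise splitness from projectivity of the terms, closure of $K$-projective complexes under extensions, and resolutions by contractible complexes with projective terms, which are projective both in the exact structure and in $C(\mathcal{A})$) is sound.
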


Let $R$ be a ring. Denote by $\mathcal{GP}(R)$ the class of Gorenstein projective left $R$-modules and by $\mathcal{P}(R)$ the class of projective left $R$-modules. Then $\mathcal{GP}(R)$ is a Frobenius exact category (see \cite[4.1]{K}).
%
%
%

\begin{cor} Consider the above Frobenius exact category $\mathcal{GP}(R)$ and its stable category $\underline{\mathcal{GP}(R)}$. The assignments
\begin{center}
 $\underline{\mathcal{M}}\mapsto$~${\mathcal{M}^{\bot}}$~$~~$ and $~~$~$\mathcal{H}\mapsto\underline{\mathcal{H}\bigcap{^{\bot}\mathcal{H}}}$
\end{center}
  give mutually inverse bijections between the following classes: 
\begin{enumerate}

\item Silting subcategories $\underline{\mathcal{M}}$ of $\underline{\mathcal{GP}(R)}$ with $\mathcal{P}(R)\subseteq\mathcal{M}$.

\item Subcategories $\mathcal{H}$ of $\mathcal{GP}(R)$, which are specially covariantly finite and coresolving in $\mathcal{GP}(R)$ such that $\w{\mathcal{H}}=\mathcal{GP}(R)$ and for any $H\in \mathcal{H}$, there exists a positive integer $t\geq 1$ making $\mathrm{Ext}_{R}^{i}(H,\mathcal{H})=0$ for all $i\geq t$.

 \end{enumerate}
\end{cor}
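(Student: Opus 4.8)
The plan is to obtain this statement as a direct specialization of Corollary \ref{cor:1.2} to the Frobenius exact category $\mathcal{GP}(R)$, so that the only genuine work is to match up the ingredients of the two statements. First I would record that $\mathcal{GP}(R)$ is indeed a Frobenius exact category by \cite[4.1]{K}, which is exactly the standing hypothesis of Corollary \ref{cor:1.2}; hence that corollary applies verbatim with the abstract category $\C$ replaced by $\mathcal{GP}(R)$ and its stable category $\underline{\C}$ replaced by $\underline{\mathcal{GP}(R)}$.

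Next I would identify the subcategory of projective objects of the exact category $\mathcal{GP}(R)$. In the Frobenius structure on $\mathcal{GP}(R)$ the projective--injective objects are precisely the projective $R$-modules, so that $Proj(\mathcal{GP}(R))=Inj(\mathcal{GP}(R))=\mathcal{P}(R)$ (this is also reflected in the Example, where $\mathcal{P}(R)$ is shown to be a $\mathcal{GP}(R)$-projective generator for $\mathcal{GP}(R)$). Consequently the condition $Proj(\C)\subseteq\mathcal{M}$ of Corollary \ref{cor:1.2} becomes exactly $\mathcal{P}(R)\subseteq\mathcal{M}$, matching item (1), while the orthogonal classes $\mathcal{M}^{\bot}$ and $^{\bot}\mathcal{H}$ are formed using the higher extension groups of $\mathcal{GP}(R)$ in both formulations and therefore agree.

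The only remaining point requiring an argument is to replace the exact-category extension groups $\mathrm{Ext}^{i}_{\mathcal{GP}(R)}$ occurring in Corollary \ref{cor:1.2} by the absolute groups $\mathrm{Ext}^{i}_{R}$ appearing in the present statement. For this I would use that $\mathcal{GP}(R)$ is closed under extensions in $R\text{-Mod}$, so that for $i=1$ the exact-category extension group coincides with $\mathrm{Ext}^{1}_{R}$. For higher $i$ I would observe that, for a deflation $P\rightarrow M$ with $P$ projective and $M$ Gorenstein projective, the kernel is again Gorenstein projective, so the syzygy functor $\Omega$ of the exact category $\mathcal{GP}(R)$ coincides with the usual syzygy in $R\text{-Mod}$. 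Combining this with the dimension-shifting isomorphism $\E^{i+1}(X,Y)\cong\E(\Omega^{i}X,Y)$ recorded before Lemma \ref{lem4}, an easy induction yields $\mathrm{Ext}^{i}_{\mathcal{GP}(R)}(H,H')\cong\mathrm{Ext}^{i}_{R}(H,H')$ for all $i\geq 1$ and all $H,H'\in\mathcal{GP}(R)$.

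I expect the main obstacle to be precisely this last identification of extension groups: one must check that iterating syzygies inside $\mathcal{GP}(R)$ never leaves the subcategory, and that the comparison map between the exact-category $\mathrm{Ext}$ and the absolute $\mathrm{Ext}$ is an isomorphism in every positive degree. Once this is established, substituting $\mathrm{Ext}^{i}_{R}$ for $\mathrm{Ext}^{i}_{\mathcal{GP}(R)}$ in both items of Corollary \ref{cor:1.2} reproduces items (1) and (2) above, and the two assignments $\underline{\mathcal{M}}\mapsto\mathcal{M}^{\bot}$ and $\mathcal{H}\mapsto\underline{\mathcal{H}\bigcap{^{\bot}\mathcal{H}}}$ are mutually inverse bijections, as claimed.
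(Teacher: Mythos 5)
Your proposal is correct and takes essentially the same route as the paper, which states this corollary as a direct specialization of Corollary \ref{cor:1.2} to the Frobenius exact category $\mathcal{GP}(R)$ (citing \cite[4.1]{K}) and leaves implicit the identifications $Proj(\mathcal{GP}(R))=Inj(\mathcal{GP}(R))=\mathcal{P}(R)$ and $\mathrm{Ext}^{i}_{\mathcal{GP}(R)}(H,H')\cong\mathrm{Ext}^{i}_{R}(H,H')$. Your explicit verification of these two identifications --- using that $\mathcal{GP}(R)$ is extension-closed in $R$-Mod for degree one, and that syzygies computed inside $\mathcal{GP}(R)$ agree with the usual $R$-module syzygies so that dimension shifting handles all higher degrees --- is exactly the routine check the paper omits.
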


\renewcommand\refname{\bf References}

\textbf{Yajun Ma}\\
Department of Mathematics, Nanjing University, Nanjing 210093, China.\\
E-mail: \textsf{13919042158@163.com}\\[1mm]
\textbf{Nanqing Ding}\\
Department of Mathematics, Nanjing University, Nanjing 210093, China.\\
E-mail: \textsf{nqding@nju.edu.cn}\\
\textbf{Yafeng Zhang}\\
Department of Mathematics, Nanjing University, Nanjing 210093, China.\\
E-mail: \textsf{470985396@163.com}\\[1mm]
\textbf{Jiangsheng Hu}\\
Department of Mathematics, Jiangsu University of Technology,
 Changzhou 213001, China.\\
E-mail: \textsf{jiangshenghu@hotmail.com}
\end{document}